\documentclass[11pt]{article}

\usepackage[square,numbers]{natbib}
\usepackage{amsmath,amssymb,amsthm,mathtools}
\usepackage{bm}
\usepackage{geometry}
\usepackage{microtype}
\usepackage{xcolor}
\usepackage{hyperref}
\usepackage[nameinlink,noabbrev]{cleveref}
\usepackage{aliascnt}
\usepackage{authblk}

\newcommand{\OT}{\mathsf {OT}_{\varepsilon}}

\newcommand{\ind}{\mathbf 1}

\hypersetup{
  colorlinks=true,
  linkcolor=red!65!black,
  citecolor=blue!65!black,
  urlcolor=blue!65!black
}

\usepackage{amsbsy}
\usepackage{mathrsfs}
\usepackage{amsmath}
\usepackage{amsfonts}
\usepackage{amssymb}
\usepackage{graphicx}
\usepackage{times}
\usepackage{comment}
\usepackage{dsfont}
\usepackage{bm}

\usepackage{color}
\usepackage{fancyhdr}
\allowdisplaybreaks[4]

\usepackage{enumerate}
\usepackage{enumitem}

\newcommand{\cB}{\mathcal{B}}\newcommand{\cC}{\mathcal{C}}
\newcommand{\cD}{\mathcal{D}}\newcommand{\cE}{\mathcal{E}}\newcommand{\cF}{\mathcal{F}}
\newcommand{\cH}{\mathcal{H}}

\newcommand{\cP}{\mathcal{P}}
\newcommand{\cS}{\mathcal{S}}
\newcommand{\cX}{\mathcal{X}}
\newcommand{\cY}{\mathcal{Y}}\newcommand{\cZ}{\mathcal{Z}}  
\newcommand{\bE}{\mathbb{E}}

\newcommand{\bR}{\mathbb{R}}

\DeclareMathOperator{\tr}{tr} 

\theoremstyle{plain}
\newtheorem{theorem}{Theorem}[section]

\newaliascnt{proposition}{theorem}
\newtheorem{proposition}[proposition]{Proposition}
\aliascntresetthe{proposition}

\newaliascnt{lemma}{theorem}
\newtheorem{lemma}[lemma]{Lemma}
\aliascntresetthe{lemma}

\newaliascnt{corollary}{theorem}
\newtheorem{corollary}[corollary]{Corollary}
\aliascntresetthe{corollary}

\theoremstyle{definition}

\newaliascnt{definition}{theorem}
\newtheorem{definition}[definition]{Definition}
\aliascntresetthe{definition}

\newaliascnt{setting}{theorem}

\aliascntresetthe{setting}

\newaliascnt{remark}{theorem}
\newtheorem{remark}[remark]{Remark}
\aliascntresetthe{remark}

\newaliascnt{example}{theorem}

\aliascntresetthe{example}

\newaliascnt{assumption}{theorem}
\newtheorem{assumption}[assumption]{Assumption}
\aliascntresetthe{assumption}
\crefname{equation}{}{}
\crefname{theorem}{Theorem}{Theorems}
\crefname{proposition}{Proposition}{Propositions}
\crefname{lemma}{Lemma}{Lemmas}
\crefname{corollary}{Corollary}{Corollaries}
\crefname{definition}{Definition}{Definitions}
\crefname{setting}{Setting}{Settings}
\crefname{remark}{Remark}{Remarks}
\crefname{example}{Example}{Examples}
\crefname{assumption}{Assumption}{Assumptions}

\Crefname{equation}{}{}
\Crefname{theorem}{Theorem}{Theorems}
\Crefname{proposition}{Proposition}{Propositions}
\Crefname{lemma}{Lemma}{Lemmas}
\Crefname{corollary}{Corollary}{Corollaries}
\Crefname{definition}{Definition}{Definitions}
\Crefname{setting}{Setting}{Settings}
\Crefname{remark}{Remark}{Remarks}
\Crefname{example}{Example}{Examples}
\Crefname{assumption}{Assumption}{Assumptions}

\title{Quantitative Stability and Sample Complexity for Regularized
Gromov--Wasserstein Distance}

\author[1,*]{Pengtao Li}
\author[1,2,3,*]{Xiaohui Chen}

\affil[1]{Department of Mathematics,
University of Southern California}

\affil[2]{Thomas Lord Department of Computer Science,
University of Southern California}

\affil[3]{Computing and Mathematical Sciences,
California Institute of Technology}

\affil[*]{Email: 
\texttt{\{pengtaol, xiaohuic\}@usc.edu}}

\date{}

\begin{document}
\maketitle

\begin{abstract}
The Gromov–Wasserstein (GW) distance provides a principled framework for comparing two probability distributions supported on different metric spaces by matching their individual intrinsic geometric structures. In practice, the GW distance is estimated from finite samples and often incorporates regularization for statistical and computational tractability. This motivates a quantitative understanding of the stability and estimation error of regularized GW distances. In this work, we study a general family of divergence regularized GW distances, including entropy and Tsallis regularizations over the full range of powers, from a nonasymptotic perspective. Specifically, we establish Lipschitz continuity of the value of regularized GW and H{\"o}lder continuity of its optimal coupling under Wasserstein perturbations of the input probability measures. Moreover, we derive the parametric rate of convergence for the empirical regularized $(2,2)$-GW distance on compactly supported Euclidean spaces.
\end{abstract}

\section{Introduction}\label{sec:introduction}
The Gromov--Wasserstein (GW) distance \cite{Mem11,Stu12}
quantifies discrepancy between probability distributions supported on different
metric spaces through their internal distance structures. 
Given two metric measure spaces
$(\mathcal{X},d_{\mathcal{X}},\mu)$ and
$(\mathcal{Y},d_{\mathcal{Y}},\nu)$, and the kernel
$
\Gamma_{r,s}\bigl((x,y),(x',y')\bigr)
:=
\left|
d_{\mathcal{X}}(x,x')^s-d_{\mathcal{Y}}(y,y')^s
\right|^r,
$
the associated $(r,s)$-GW distance is defined by 
\begin{equation*}
\mathsf{GW}_{r,s}(\mu,\nu)
:=\inf_{\pi\in\Pi(\mu,\nu)}
\biggl(
\mathcal{E}_{\Gamma_{r,s}}(\pi)
:=\int_{\mathcal{X}\times\mathcal{Y}}\int_{\mathcal{X}\times\mathcal{Y}}
\Gamma_{r,s}
\,d\,(\pi\otimes\pi)
\biggr)^{1/r},
\end{equation*}
where $\Pi(\mu,\nu)$ denotes the set of probability measures (i.e., couplings) on $\mathcal{X}\times\mathcal{Y}$ with marginals $\mu$ and $\nu$.
When the context is clear, we write $\Gamma_{r,s}$ as $\Gamma$.
Rooted in the theory of optimal transport (OT)
\cite{Vil09,villani2021topics}, the Gromov--Wasserstein formulation
provides an $L^p$ relaxation of the
Gromov--Hausdorff distance \cite{Mem11}.
A key advantage of this formulation is that it does not require
a prescribed transportation cost between points in different spaces.
This is particularly useful for heterogeneous or unaligned datasets,
for which a meaningful cross-domain cost may be difficult to define
\cite{AlvarezMelisJaakkola2018}. %[Section~3.1]
Instead, GW compares within-space distances and seeks a coupling
of the underlying probability measures that preserves these
distances as closely as possible
\cite{Mem11,peyre2016gromov}.
This flexibility has enabled applications in shape correspondence
and graph matching
\cite{Mem09,XLZD19, chen_yao_tang_chen2023gromov_graph},
language alignment \cite{AlvarezMelisJaakkola2018},
generative modeling \cite{BunneEtAl2019},
and single-cell multi-omics alignment \cite{demetci2022scot}.

Entropic regularization offers a practical approach to alleviating the NP-hard computational burden for exact   GW distance, 
by enabling iterative algorithms based on efficiently solvable entropic OT subproblems
\cite{peyre2016gromov,rioux_kengo2024entropicGW}.
For $\varepsilon>0$, the entropically regularized GW functional is
defined by
\begin{equation}\label{eq:entropy_regularized-gw-value}
S_{r,s,\varepsilon}^{\text{ent}}(\mu,\nu)
:=
\inf_{\pi\in\Pi(\mu,\nu)}
\left\{
\mathcal E_{\Gamma_{r,s}}(\pi)
+
\varepsilon\,\mathsf D_{\mathrm{KL}}
\bigl(\pi,\mu\otimes\nu\bigr)
\right\},
\end{equation}
where $\mathsf D_{\mathrm{KL}}$ denotes the entropy (a.k.a. Kullback--Leibler) divergence. 
However, entropic regularization inherently entails diffuse couplings: every entropic optimizer has a strictly positive density with respect to the product of the marginals, and hence has full support
on the product of their supports \cite{nutz2021introduction, Chewi_Niles_Rigollet_statistical_OT_2025}.
This contrasts with the concentrated structure of optimal couplings
available in several unregularized GW settings. 
In particular, under suitable assumptions on the marginals, unregularized Euclidean $(2,2)$-GW admits an optimal coupling concentrated on the union of at most two graphs \cite{dumont2025mongeGWexistence}. In several structured settings, even a single graph suffices. For instances, Monge optimizers are known for rotationally invariant absolutely continuous measures on the same Euclidean space \cite{Stu12}, for uniform measures on Euclidean spheres \cite{arya2026gromovsphere}, and for Gaussian measures with proportional covariance spectra \cite{delon2022gromovGaussian}.
Thus, while entropy facilitates computation, its tendency to spread mass can obscure sharp correspondences and lead to blurring in OT-based applications. 
\begin{comment}
This trade-off is particularly relevant for GW because of the generally nonconvex structure of its quadratic distortion. 
For entropic GW, known sufficient conditions for strict convexity of the matrix variational objective, and hence optimizer uniqueness, 
require $\varepsilon$ to exceed problem-dependent thresholds \cite{rioux_kengo2024entropicGW,RiouxGoldfeldKatoLimitLaws2024}.
However, stronger regularization favors the independent coupling $\mu\otimes\nu$, 
so meeting these thresholds  may conflict with the goal of preserving sharply localized correspondences and exacerbate entropic blurring effect.
\end{comment}

In regularized OT formulations,
alternative penalties, including Tsallis divergences, 
allow the optimal density to vanish and thus permit sparse couplings
\cite{Nutz.2024,BayraktarEcksteinZhang_stability,pengtao_alberto_xiaohui_2026centrallimittheoremsbootstrap}.
This motivates replacing the entropic penalty in
\eqref{eq:entropy_regularized-gw-value} by a general divergence and investigating the resulting regularized GW problem.
Thus, in this paper we consider, for $\varepsilon>0$, regularized $(r,s)$-GW functional
\begin{equation}\label{eq:regularized-gw-value}
S_{r,s,\varepsilon}(\mu,\nu)
:=
\inf_{\pi\in\Pi(\mu,\nu)}
\Bigl\{
\cF_\varepsilon^{\mu,\nu}(\pi)
:=
\cE_{\Gamma_{r,s}}(\pi)+\varepsilon\,
\mathsf{D}_\varphi(\pi,\mu\otimes\nu)
\Bigr\},
\end{equation}
where for probability measures $Q\ll R$ on a common space, the $\varphi$-divergence is defined as
\[
\mathsf{D}_\varphi(Q,R)
=
\int\varphi\Bigl(\frac{dQ}{dR}\Bigr)dR,
\]
and we set $\mathsf{D}_\varphi(Q,R) := +\infty$ if $Q\not\ll R$. The function $\varphi:[0,\infty)\to\bR$ is continuous and strictly convex, with $\varphi(1)=0$ and $\lim_{x\to\infty}\varphi(x)/x=+\infty$.
This formulation includes entropic regularization as well as Tsallis regularization as special cases.
%This work addresses the stability and sample complexity of the regularized Gromov-Wasserstein distance.

\subsection{Overview of the main results}

We study $\varphi$-divergence regularized GW distance from a non-asymptotic perspective, focusing on two aspects:   
quantitative stability of optimal couplings under perturbations of the marginals,
and finite-sample bounds for estimating the regularized functional from independent samples. Throughout, we impose the following assumption on $\varphi$, which is satisfied by the entropy regularizer $\varphi(x)=x\log x$, with $0 \cdot \log0:=0$, and the Tsallis regularizers $\varphi_\alpha(x)=(x^\alpha-1)/\alpha$ for every $\alpha>1$.
\begin{assumption}[Divergence regularizer]
\label{assumption:regularizaion}
The function $\varphi$ belongs to $\cC^2((0,\infty))$, satisfies
$\varphi''(s)>0$ for $s>0$, and there exist
$\beta\ge0$ and $\lambda_0,\lambda_1,\lambda_2\ge0$
such that
\begin{equation}\label{eq:Aphi}
\frac{1}{\varphi''(s)}
\le\lambda_0s^{-\beta}+\lambda_1+\lambda_2s,
\qquad s>0.
\end{equation}
\end{assumption}
For Kullback-Leibler divergence, \Cref{assumption:regularizaion} holds with
$(\beta,\lambda_0,\lambda_1,\lambda_2)=(0,0,0,1).$
The inclusion of the parameter $\beta$ further allows us to cover Tsallis
regularization for the full range $\alpha>1$,
thereby including the regime $\alpha>2$ that is excluded by the condition
$
1/\varphi''(s)
\le \lambda_1+\lambda_2 s
$
imposed in \cite{BayraktarEcksteinZhang_stability}. 
Indeed, if $1<\alpha\le2$, then \Cref{assumption:regularizaion} is satisfied with
$
(\beta,\lambda_0,\lambda_1,\lambda_2)
=
\Bigl(0,0,1,\frac{2-\alpha}{\alpha-1}\Bigr),
$
by concavity of the map $s\mapsto s^{2-\alpha}$. If $\alpha>2$, it is satisfied with
$
(\beta,\lambda_0,\lambda_1,\lambda_2)
=
\bigl(\alpha-2,\frac{1}{\alpha-1},0,0\bigr).
$
Throughout the paper, we assume without loss of generality that $\varphi'(1)=1$, since replacing $\varphi(x)$ by $\varphi(x)+(1-\varphi'(1))(x-1)$ leaves the divergence between probability measures unchanged. 

\begin{comment}
For Tsallis regularization, $\varphi_\alpha(s):=\frac{s^\alpha-1}{\alpha},\alpha>1$ for $s\geq0$.
Then $\frac1{\varphi_\alpha''(s)}
=\frac{s^{2-\alpha}}{\alpha-1}$.
If $1<\alpha\le2$, concavity of $s\mapsto s^{2-\alpha}$
gives $s^{2-\alpha}
\le
(\alpha-1)+(2-\alpha)s$
for $s>0$
and hence
$\frac1{\varphi_\alpha''(s)}
\le
1+\frac{2-\alpha}{\alpha-1}s$.

If $\alpha>2$, then
$
\frac1{\varphi_\alpha''(s)}
=
\frac1{\alpha-1}s^{-(\alpha-2)}
$
and then  \Cref{assumption:regularizaion} holds with
$$ (\beta,\lambda_0,\lambda_1,\lambda_2)
=
\Bigl(
\alpha-2,\frac1{\alpha-1},0,0
\Bigr).$$
\end{comment}

\subsubsection{Quantitative stability of optimizers}
Our first main result quantifies the sensitivity of optimal
couplings to perturbations of the marginal distributions measured in Wasserstein distances. Specifically, we establish the continuity of the map
\begin{align}\label{eqn:introduction_mapping}
(\mu,\nu)
\mapsto
\pi^*
:=
\arg\min_{\pi\in\Pi(\mu,\nu)}
\cE_{\Gamma}(\pi)+\varepsilon\,
\mathsf{D}_\varphi(\pi,\mu\otimes\nu)
\end{align}
in \Cref{theorem:DGW-stability}. When both the marginals and the corresponding optimizers are measured in the
$q$-Wasserstein distance, we prove that the map in \eqref{eqn:introduction_mapping} is
H\"older continuous of order $1/[q(\beta+2)]$.

Our proof builds on the shadow-based strategy developed in \cite{eckstein2022quantitative,BayraktarEcksteinZhang_stability} for regularized OT (ROT) problems. 
A shadow (\Cref{definition:shadow}) transfers a coupling to the perturbed marginal constraints while preserving its Wasserstein displacement without increasing its divergence relative (cf. \Cref{proposition_shadow_property}). 
% In contrast to these results on ROT, a key distinction concerns the well-definedness of the mapping (4) under Gromov-Wasserstein setting. For fixed marginals (μ, ν), the ROT objective
However, a key distinction of our Gromov-Wasserstein setting from ROT concerns the well-definedness of the mapping \eqref{eqn:introduction_mapping}.
To elaborate, the ROT objective
\[
\pi\longmapsto
\int c\,d\pi
+\varepsilon\,\mathsf D_\varphi(\pi,\mu\otimes\nu)
\]
for fixed marginals $(\mu,\nu)$, is strictly convex on its domain. %: the transport term is linear, while the divergence is strictly convex. 
%in its first argument when $\varphi$ is strictly convex. 
Since the feasible set $\Pi(\mu,\nu)$ is convex,
any finite minimizer is therefore unique
\cite[Lemma~2.1]{BayraktarEcksteinZhang_stability}.
Unfortunately, this claim does not extend directly to regularized GW.
Indeed, the distortion 
\[
\pi\mapsto
\cE_\Gamma(\pi)
=\int_{\mathcal{X}\times\mathcal{Y}}
\int_{\mathcal{X}\times\mathcal{Y}}
\left|
d_{\mathcal{X}}(x,x')^s-d_{\mathcal{Y}}(y,y')^s
\right|^r
\,d\pi(x,y) d\pi(x',y')
\]
is quadratic in the coupling $\pi$ and
may not be convex. For $\pi_0,\pi_1\in\Pi(\mu,\nu)$
and $\pi_t=(1-t)\pi_0+t\pi_1$, its interpolation identity is
\[
\cE_\Gamma(\pi_t)
=
(1-t)\cE_\Gamma(\pi_0)
+t\cE_\Gamma(\pi_1)
-t(1-t)\cE_\Gamma(\pi_1-\pi_0),
\qquad 0<t<1.
\]
The signed quadratic term $\cE_\Gamma(\pi_1-\pi_0)$
can be negative, so the strict convexity of the divergence
penalty does not by itself guarantee convexity or uniqueness
for the regularized GW problem. Consequently, without
additional structure, uniqueness and thus stability of the mapping in \eqref{eqn:introduction_mapping} are irrelevant.

We address this difficulty in \Cref{assumption:Cgamma}, through curvature domination relative to a minimizer, to  control the negative quadratic remainder
and ensure uniqueness of the optimizer (\Cref{corollary:GW-uniqueness}), as a sufficient condition. 
Such sufficiently strong regularization requirement also appears in related entropic GW results \cite{rioux_kengo2024entropicGW, RiouxMarksPasseggeriGoldfeld2026, RiouxGoldfeldKatoLimitLaws2024}. 
%Moment-dependent and spectral conditions for convexity of auxiliary variational objectives are established in \cite[Theorem~6]{rioux_kengo2024entropicGW} and \cite[Proposition~7]{RiouxMarksPasseggeriGoldfeld2026}, respectively. The resulting uniqueness is also used in \cite[Theorem~9]{RiouxGoldfeldKatoLimitLaws2024} to obtain Gaussian limits and bootstrap consistency for empirical entropic GW values.

This type of result regarding H\"older continuity of optimal couplings in Wasserstein distances is first obtained by \cite{eckstein2022quantitative} of order $1/2q$ for entropic OT, under exponential moment assumptions. 
It was subsequently extended in \cite{BayraktarEcksteinZhang_stability} to general divergence regularization under weaker moment assumptions. 
The latter framework nevertheless excludes Tsallis regularization with power $\alpha>2$, whereas our regularity condition accommodates the full range $\alpha>1$.
The concurrent H\"older exponent $1/[q(\beta+2)]$ matches $1/2q$ therein for entropy and Tsallis powers $1<\alpha\le2$, for which $\beta=0$. 
When $\alpha>2$, taking $\beta=\alpha-2$ yields the exponent $1/\alpha q$.

\subsubsection{Sample complexity}

We next study a statistical instance of marginal perturbation, in which the population marginals are replaced by empirical measures constructed from independent samples. 
Our goal is to quantify the resulting rate of convergence for the regularized GW value. 
Here we specialize to regularized $(2,2)$-GW problem and compactly supported distributions $\mu\in\mathcal P(\mathbb R^{d_x})$ and $\nu\in\mathcal P(\mathbb R^{d_y})$.
Let $\mu_n$ and $\nu_n$ be the empirical measures of two independent samples, each consisting of $n$ independent and identically distributed (i.i.d.) observations from its respective marginal. 
For every fixed $\varepsilon>0$, Theorem~\ref{thm:parametric_sample_complexity} yields
\begin{align}
\mathbb E\left|
S_{2,2,\varepsilon}(\mu_n,\nu_n)-S_{2,2,\varepsilon}(\mu,\nu)
\right|^2
\le C n^{-1},
\label{eq:intro-sample-l2}
\end{align}
for some constant $C>0$. We highlight that these bounds hold for entropy regularization and all Tsallis regularization with powers
$\alpha>1$. 
%Unlike \Cref{theorem:DGW-stability}, neither curvature domination nor uniqueness of GW minimizers is required here.
Next, we explain our proof structure of \eqref{eq:intro-sample-l2}, where our starting point of the proof is a variational representation connecting regularized GW to ROT.
Extending the matrix reformulation of \cite{zhang_ziv2024gromov}, Theorem~\ref{thm:gw-duality} shows that
\begin{align}\label{eq:intro-dual}
S_{2,2,\varepsilon}(\mu,\nu)-\mathsf{GW}^1(\mu,\nu)
=
\min_{A\in \mathcal D}
\left\{
32\|A\|_F^2+\OT^{c_A}(\mu,\nu)
\right\},
\end{align}
where $\mathsf{GW}^1$ is an explicit marginal-moment term \eqref{eqn:def_GW1},
$\mathcal D\subset\mathbb R^{d_x\times d_y}$ is a compact matrix
domain, and
$c_A(x,y)=-4\|x\|^2\|y\|^2-32x^\top Ay$. 
Duality of the inner ROT problem then makes empirical-process methods available. 
Unlike the usual argument in entropic setting \cite{ZivGoldfeldKengoKatoGabrielRiouxRitwikSadhu, zhang_ziv2024gromov} to leverage sufficient smoothness of the dual potentials, general $\varphi$-divergence regularization may produce vanishing optimal densities while providing insufficient  smoothness \cite{AlbertoGonzalez-SanzEcksteinStephanNutzMarcel2025Lpweaklimit,pengtao_alberto_xiaohui_2026centrallimittheoremsbootstrap}. In particular, our main analytical argument is to control the family of
population optimal dual integrands uniformly over cost functions family $c_A, A\in \mathcal D$, without relying on higher-order smoothness.
Lemma~\ref{lem:covering} establishes the covering bound 
\[
\log N(\eta,\mathcal H,\|\cdot\|_\infty)
\le
C\log( 1+ C/\eta),
\qquad \eta> 0,
\]
for the uniformly bounded class $\mathcal H$ of population optimal dual integrands. Here, $N(\eta,\cH ,\|\cdot\|_\infty)$ denotes the covering number of $\cH$ under $\|\cdot\|_\infty$-balls of radius $\eta$.
The proof exploits the curvature of the conjugate $\psi$ on regions where the associated optimal density is bounded away from zero. 
Specifically, we construct a fixed finite partition of one marginal support and prove that the matrix parameter, together with the averages of one potential over the partition cells, controls the dual potentials in the uniform norm.
This yields a finite-dimensional H\"older parametrization of $\mathcal H$, without requiring higher-order smoothness.
Then a chaining argument yields
\[
\mathbb E\sup_{h\in\mathcal H}
\left|
\bigl(\mu_n\otimes\nu_n-\mu\otimes\nu\bigr)h
\right|
\le Cn^{-1/2}.
\]
Combining with recent progress in ROT bias estimate  \cite{alberto.gonzalez-marcel.nutz_austin.stromme2026}, recalled in
Proposition~\ref{proposition:ROT_bias_parametric_bound}, we obtain the mean absolute error bound
\begin{align}
\mathbb E\left|
S_{2,2,\varepsilon}(\mu_n,\nu_n)-S_{2,2,\varepsilon}(\mu,\nu)
\right|
\le C n^{-1/2},
\label{eq:intro-sample-l1}
\end{align}
Finally, Efron--Stein
inequality gives a variance bound of order $n^{-1}$,
which, combined with \eqref{eq:intro-sample-l1}, proves
\eqref{eq:intro-sample-l2}.

\subsection{Related literature}
\label{subsec:related-literature}

\paragraph{Exact GW problems.}
Statistical analysis of unregularized GW has been substantially progressed in recent years.
A variational representation of $(2,2)$-GW problem through a
finite-dimensional family of ordinary OT problems, together with empirical
convergence rates, is established in \cite{zhang_ziv2024gromov}.
For compactly supported marginals in $\mathbb R^{d_x}$ and
$\mathbb R^{d_y}$, the empirical estimator based on independent samples satisfies
\[
\mathbb E\!\left[
\left|\mathsf{GW}_{2,2}^{\,2}(\mu_n,\nu_n)
-\mathsf{GW}_{2,2}^{\,2}(\mu,\nu)\right|
\right]
\le
C n^{- \frac{2}{(d_x\wedge d_y)\vee 4}}
(\log n)^{\mathbf 1_{\{d_x\wedge d_y=4\}}}.
\]
Matching worst-case lower bounds are also established for the empirical estimator, up to the logarithmic factor. The dependence on the smaller dimension $d_x\wedge d_y$ is related to the lower-complexity adaptation principle developed in \cite{groppe2024lower}.
%whose analysis covers both unregularized and entropic GW. 
This near-optimal rate of convergence for $(2,2)$-GW is extended to unbounded supports assuming only finite polynomial moments in \cite{kato2025convergence}, along with matching minimax lower bounds up to logarithmic factors and deviation inequalities. 
For compactly supported marginal distributions, recent preprints extend empirical upper bounds beyond $(2,2)$-GW with the same rate. For instances, Euclidean even-order $(2r,2k)$-GW is treated in \cite{paliy2026even}, $(2,1)$-GW problem is studied in \cite{leung2026sample}, and limit laws for unregularized GW objectives are established in for the Euclidean $(2,2)$ formulation in discrete and suitably regular semi-discrete settings \cite{RiouxGoldfeldKatoLimitLaws2024} and for $(p,q)$ formulation on arbitrary finite metric measure spaces \cite{RiouxMarksPasseggeriGoldfeld2026}. In addition, \cite{RiouxGoldfeldKatoLimitLaws2024} also studies stability of the unregularized $(2,2)$-GW functional under perturbations of the marginal distributions in discrete and semi-discrete settings.

A complementary literature develops statistically and computationally tractable alternatives to GW through modified or relaxed functionals.
These include the statistical and computational development of second lower bound (distribution of distances) \cite{weitkamp2024distances} as a lower bound on GW \cite{Mem11}, and sliced inner-product GW discrepancy \cite{gong2026sliced,pal2025wasserstein_alignment}.

\paragraph{Regularized GW problems.}
For entropically regularized Euclidean $(2,2)$-GW, a variational representation through a finite-dimensional family of entropic OT problems yields parametric rate for the empirical regularized value under light tail assumptions
\cite{zhang_ziv2024gromov}, namely,
\[
\mathbb E\,\Bigl[
\bigl|
S_{2,2,\varepsilon}^{\text{ent}}(\mu_n,\nu_n)
-S_{2,2,\varepsilon}^{\text{ent}}(\mu,\nu)
\bigr|
\Bigr]
\leq
C n^{-\frac{1}{2}}.
\]
Lower-complexity adaptation principle also applies under compact Euclidean supports \cite{groppe2024lower}.
Distributional inference for entropic (2,2)-GW under moment constraints is developed in \cite{RiouxGoldfeldKatoLimitLaws2024}. 
Beyond squared-Euclidean geometry, parametric $n^{-1/2}$ empirical convergence rates and limit distributions for entropic GW on arbitrary finite spaces with general dissimilarity kernels are established in \cite{RiouxMarksPasseggeriGoldfeld2026}. Neural estimators of entropic GW are developed and analyzed through non-asymptotic bounds \cite{wang2026neural}.

Stability of the entropic $(2,2)$-GW functional under perturbations of the marginal distributions has been established under light tail assumptions \cite{RiouxGoldfeldKatoLimitLaws2024}.
Separately, smoothness of the matrix variational objective in \eqref{eq:intro-dual} with respect to $A$ is established in \cite{rioux_kengo2024entropicGW}, with a lower bound on $\varepsilon$ ensuring strict convexity and thus uniqueness of the entropic GW optimizer.

\paragraph{Related advances in regularized OT problems.}
Parametric estimation for entropic OT has been established in \cite{genevay2019sample,mena2019statistical}. 
Subsequent work develops related limit theory; see \cite{Alberto_Sanz_Gonzalez_del_Barrio_CLT_survey25,LiChen2025_EMOT} and references therein. 
Recent advances show that general sparse divergence does not necessarily preclude parametric statistical behavior, contrary to the conventional wisdom
\cite{BayraktarEcksteinZhang_stability}. Central limit theorems under broad sparse regularizations are established in \cite{AlbertoGonzalez-SanzEcksteinStephanNutzMarcel2025Lpweaklimit, gonzalez2025quadratic} and uniform central limit theorems and bootstrap consistency are established in \cite{pengtao_alberto_xiaohui_2026centrallimittheoremsbootstrap}. Finite-sample bias and variance bounds are obtained in \cite{alberto.gonzalez-marcel.nutz_austin.stromme2026}.

On the stability side, quantitative Wasserstein bounds for entropic
optimal couplings under marginal perturbations are obtained in
\cite{eckstein2022quantitative}, later extended to a broad class of divergence penalties in
\cite{BayraktarEcksteinZhang_stability}. Quantitative stability for quadratically regularized OT is further
established in \cite{gonzalez2026stability}.

\subsection{Notations}
For a Polish space $(\cZ, d_\cZ)$, we use 
$\cP(\cZ)$ to denote the Borel probability
measures on $\cZ$. For $p<\infty$, $\cP_p(\cZ)$ denotes the Borel probability measures with finite $p$-th moment. 
For $p=\infty$, we define $\cP_\infty(\cZ)$ as the measures with bounded support.
For each $p\in[1,\infty]$, equip
$\cX\times\cY$ with the product metric
\[
d_{\cX\times\cY,p}((x,y),(x',y'))
:=
\begin{cases}
\bigl(
d_{\cX}(x,x')^p + d_{\cY}(y,y')^p\bigr)^{1/p}, & p<\infty,\\[0.4em]
d_{\cX}(x,x') \vee d_{\cY}(y,y') & p=\infty,
\end{cases}
\]
where $a\vee b := \max\{a,b\}$.
Throughout, the \(p\)-Wasserstein distance on $W_p$ on $\cX\times\cY$ is induced by the particular product metric $d_{\cX\times\cY,p}$, with the value $+\infty$ allowed.
Specifically, the distance between two tuples of Borel probability measures $(\mu,\nu)$ and $(\tilde \mu,\tilde \nu)$
will often be quantified by
\[
\Delta:=W_p((\mu,\nu),(\tilde \mu,\tilde \nu))
:=
\begin{cases}
\left(
W_p^p(\mu,\tilde \mu) + W_p^p(\nu,\tilde \nu)\right)^{1/p}, & p<\infty,\\[0.6em]
W_\infty(\mu,\tilde \mu) \vee W_\infty(\nu,\tilde \nu), & p=\infty.
\end{cases}
\]

For a finite signed measure $\sigma$ and a measurable function $r\ge0$ satisfying $\int_{\cX\times\cY}r\,d|\sigma|<\infty$,
write $r\sigma(A):=\int_A r\,d\sigma$ and use the full variation norm 
$\|r\sigma\|_{\mathrm{TV}}
:=|r\sigma|(\cX\times\cY)
=\int_{\cX\times\cY}r\,d|\sigma|$.
We write $B(x,r)$ for the open Euclidean ball of radius $r$ centered at $x$, and $\overline B(x,r)$ for its closure. 
For $f:\cX\to\bR$ and $g:\cY\to\bR$, set $(f\oplus g)(x,y)=f(x)+g(y)$.
For matrices $A,B$, $\langle A,B\rangle_F=\tr(A^\top B),\,
 \|A\|_F^2=\langle A,A\rangle_F.$ Given an integer $i$, we write $ [i] := \{1,\dots,i\}$.

\subsection{Organization of the paper}
The remainder of the paper is structured as follows. \Cref{sec: preliminaries} provides the main background results needed for our analysis. 
The quantitative stability of optimizer of regularized $(r,s)$-Gromov-Wasserstein distance is presented in \Cref{sec:quantitative_stability_regularized_GW}. 
The sample complexity of estimating regularized $(2,2)$-GW is given in \Cref{sec:sample_complexity_regularized _Gromov_Wasserstein_distance}. Appendix \ref{appendix_omitted_proofs} gathres omitted proof details.

\section{Background and preliminaries}\label{sec: preliminaries}

\subsection{Background on divergence regularized optimal transport}

The regularized optimal transport problem (ROT) is defined by
\begin{align}\label{eq:intro_def_ROT}
\mathrm{OT}_{\varepsilon}^c(\mu,\nu)
:=
\inf_{\pi\in\Pi(\mu,\nu)}
\biggl\{ 
\int c\,d\pi + \varepsilon \,\mathsf D_{\varphi}(\pi,\mu\otimes\nu)
\biggr\}
\end{align}
for
$c:\cX\times \cY\to\mathbb{R}$ continuous cost function.
When $\mu$ and $\nu$ have compact supports $\cX$ and $\cY$, respectively, ROT attains strong duality, i.e., the value of the primal problem \eqref{eq:intro_def_ROT} equals that of the dual problem 
\[
\sup_{\substack{f\in\cC(\cX) \\ g\in\cC(\cY)}}
\biggl\{
\Phi_{\mu,\nu}^{c}(f,g)
: =
\int
\Bigl(
f(x)+g(y)
-
\varepsilon\,
\psi\,
\Bigl(
\frac{f(x)+g(y)-c(x,y)}{\varepsilon}
\Bigr)
\Bigr)
\,d(\mu\otimes \nu)(x,y)
\biggr\}
\]
where $\psi(s) := \sup_{t\geq 0}(st-\varphi(t))$ is the convex conjugate of $\varphi$.
The dual problem admits a (non-unique) optimizer $(f_*,g_*)\in L^\infty(\mu)\times L^\infty(\nu)$.
One can find a version of the optimizer $(f_*,g_*)$ of the dual problem satisfying the first-order condition
\begin{equation}\label{eqn:schrodinger_system}
\begin{cases}
\displaystyle
\int
\psi'\,
\biggl(
\frac{f_*(x)+g_*(y)-c(x,y)}{\varepsilon}
\biggr)\,d\nu(y)
=1,
& \text{for all } x\in\cX,\\[1.2ex]
\displaystyle
\int
\psi'\,
\biggl(
\frac{f_*(x)+g_*(y)-c(x,y)}{\varepsilon}
\biggr)\,d\mu(x)
=1,
& \text{for all } y\in\cY.
\end{cases}
\end{equation}
For such a version $(f_*,g_*)$, the corresponding primal optimal coupling $\pi_\varepsilon^*$ for \eqref{eq:intro_def_ROT} is unique and of the form 
$$
\frac{d \pi_\varepsilon^*}{d (\mu\otimes\nu)}(x,y)
=\psi'\Bigl(\frac{f_*(x) + g_*(y) - c(x,y)}{\varepsilon}\Bigr).
$$
Moreover, the uniform bound
$\|f_*\oplus g_*\|_\infty
\le
5\|c\|_\infty+\varepsilon
<\infty$
holds on $\cX\times\cY$.
If $c\in \cC(\cX\times\cY)$ admits a uniform modulus of continuity $\omega$, then $f_*$ and $g_*$ both admit $\omega$.
For further properties of ROT, see \cite{pengtao_alberto_xiaohui_2026centrallimittheoremsbootstrap,alberto.gonzalez-marcel.nutz_austin.stromme2026}.
\begin{proposition}\cite[Corollary 3.2]{alberto.gonzalez-marcel.nutz_austin.stromme2026}\label{proposition:ROT_bias_parametric_bound}
Under \Cref{assumption:regularizaion} and compact support assumption, 
\begin{equation}
\Bigl|
\bE\OT^c(\mu_n,\nu_n)-\OT^c(\mu,\nu)
\Bigr|
\leq
2\sqrt{2}\,
(5\|c\|_\infty + \varepsilon)
\frac{
\psi'(\frac{6\|c\|_\infty + \varepsilon}{\varepsilon})
}{\sqrt{n}}.
\label{eq:imported-bias}
\end{equation}
\end{proposition}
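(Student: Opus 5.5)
The plan is a standard bias argument for regularized OT: compare the empirical objective with the population one, evaluated at the population dual potentials, and control the gap using strong duality. Let $(f_*,g_*)$ be the population dual optimizer satisfying \eqref{eqn:schrodinger_system}. By the uniform bound, $\|f_*\oplus g_*\|_\infty\le 5\|c\|_\infty+\varepsilon$. Set $h_*(x,y):=f_*(x)+g_*(y)-\varepsilon\psi\bigl((f_*(x)+g_*(y)-c(x,y))/\varepsilon\bigr)$. Then $\OT^c(\mu,\nu)=(\mu\otimes\nu)h_*$.

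\textbf{Lower bound on $\bE\OT^c(\mu_n,\nu_n)$.} Since $(f_*,g_*)$ is feasible for the empirical dual, weak duality gives $\OT^c(\mu_n,\nu_n)\ge(\mu_n\otimes\nu_n)h_*$. Taking expectations, with $X_i,Y_j$ independent,
\[
\bE(\mu_n\otimes\nu_n)h_*=\frac1{n^2}\sum_{i,j}\bE h_*(X_i,Y_j)=(\mu\otimes\nu)h_*,
\]
because $\mu_n$ and $\nu_n$ are built from independent samples. Hence $\bE\OT^c(\mu_n,\nu_n)\ge\OT^c(\mu,\nu)$.

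\textbf{Upper bound on $\bE\OT^c(\mu_n,\nu_n)$.} I would use a primal feasible candidate for the empirical problem. Take the population density $p_*=\psi'((f_*\oplus g_*-c)/\varepsilon)$ restricted to the sample grid. Its empirical marginals are $a_i=\frac1n\sum_j p_*(X_i,Y_j)$ and similarly $b_j$; these are not exactly $1$, but each concentrates at $1$ with variance of order $n^{-1}\|p_*\|_\infty^2$.

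Two estimates feed in here:
\begin{itemize}
\item Monotonicity of $\psi'$ gives $\|p_*\|_\infty\le\psi'((6\|c\|_\infty+\varepsilon)/\varepsilon)$, since the argument is at most $(5\|c\|_\infty+\varepsilon+\|c\|_\infty)/\varepsilon$.
\item The empirical dual objective is concave in $(f,g)$ with gradient $(1-a_i,\,1-b_j)$ at $(f_*,g_*)$. A dual-gap estimate then gives
\[
\OT^c(\mu_n,\nu_n)-(\mu_n\otimes\nu_n)h_*\le \sup_{\text{dual}}\bigl(\langle 1-a,\delta f\rangle_{\mu_n}+\langle1-b,\delta g\rangle_{\nu_n}\bigr).
\]
Here the increments $\delta f,\delta g$ are taken with the empirical optimizers renormalized. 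Those optimizers also obey the bound $5\|c\|_\infty+\varepsilon$, so $\|\delta f\|_\infty,\|\delta g\|_\infty\le 2(5\|c\|_\infty+\varepsilon)$ once a common normalization is fixed.
\end{itemize}

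Combining, the gap is at most $2(5\|c\|_\infty+\varepsilon)\bigl(\frac1n\sum_i|1-a_i|+\frac1n\sum_j|1-b_j|\bigr)$. By Cauchy–Schwarz and conditional independence, $\bE|1-a_i|\le\sqrt{\operatorname{Var}}\le\|p_*\|_\infty/\sqrt n$, which produces the constant $2\sqrt2(5\|c\|_\infty+\varepsilon)\psi'(\cdot)/\sqrt n$ up to bookkeeping. The $\sqrt2$ should come from combining the two marginal errors in $\ell^2$.

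\textbf{Main obstacle.} The hard step is the concavity/first-order gap bound. It requires uniform $L^\infty$ control of the empirical dual optimizers relative to $(f_*,g_*)$, which must handle the additive gauge freedom $f+t$, $g-t$. I would first try normalizing both pairs at a common reference point and invoking the stated bound $5\|c\|_\infty+\varepsilon$ for each.
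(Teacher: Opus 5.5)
The paper gives no proof of this statement; it imports it from the cited reference. The proof of Theorem~\ref{thm:parametric_sample_complexity} reproduces only your lower-bound step ($R_{A^*,n}\ge0$ together with $\bE Z_{A^*,n}=0$) and calls on the cited result for the upper bound.

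Your argument is the standard one and is sound in structure:
\begin{itemize}
\item weak duality at $(f_*,g_*)$ plus independence of the two samples gives $\bE\OT^c(\mu_n,\nu_n)\ge\OT^c(\mu,\nu)$;
\item concavity of the empirical dual $\Phi_n$ at $(f_*,g_*)$, empirical strong duality, the uniform potential bounds, and the conditional variance of $a_i,b_j$ give the reverse direction.
\end{itemize}
The opening idea of using $p_*$ on the sample grid as a primal candidate is a false start: it is not feasible, and it plays no role. Drop it. Two points in the upper bound need correcting.

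\textbf{The gauge freedom is not an obstacle.} We have $\frac1n\sum_i(1-a_i)=1-(\mu_n\otimes\nu_n)p_*=\frac1n\sum_j(1-b_j)$. Hence the linear term $\frac1n\sum_i(1-a_i)\,\delta f(X_i)+\frac1n\sum_j(1-b_j)\,\delta g(Y_j)$ is invariant under $(\delta f,\delta g)\mapsto(\delta f+t,\delta g-t)$, as is $\Phi_n$. So no common normalization of the two pairs is needed. The normalization you propose, $f(x_0)=0$ for both pairs, would actually be worse: it only gives $\|\delta f\|_\infty\le 4(5\|c\|_\infty+\varepsilon)$.

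\textbf{Your bookkeeping does not reach the stated constant.} Write $K_c:=5\|c\|_\infty+\varepsilon$, $U:=\frac1n\sum_i|1-a_i|$ and $V:=\frac1n\sum_j|1-b_j|$. With $\|\delta f\|_\infty,\|\delta g\|_\infty\le 2K_c$ bounded separately, both the bound $2K_c(U+V)$ and an $\ell^2$ Cauchy--Schwarz give $4K_c\,\psi'(\cdot)/\sqrt n$, not $2\sqrt2\,K_c\,\psi'(\cdot)/\sqrt n$.

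The fix is a single recentering step:
\begin{itemize}
\item Use only $\|\delta f\oplus\delta g\|_\infty\le 2K_c$ on the sample grid, since each pair has sum-norm at most $K_c$.
\item Since the grid is a product set, this means $\sup\delta f+\sup\delta g\le 2K_c$ and $\inf\delta f+\inf\delta g\ge-2K_c$.
\item Shift by $t$ so that $\delta f$ takes values in $[-m,m]$. Then $\delta g\in[-(2K_c-m),\,2K_c-m]$, so $\|\delta f\|_\infty+\|\delta g\|_\infty\le 2K_c$.
\item The gap is therefore at most $2K_c\max(U,V)\le 2K_c\sqrt{U^2+V^2}$.
\end{itemize}

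For the expectation, $\bE[(1-a_i)^2\mid X_i]=n^{-1}\mathrm{Var}\bigl(p_*(X_i,Y)\mid X_i\bigr)\le\|p_*\|_\infty^2/n$. This uses $\int p_*(x,\cdot)\,d\nu=1$ for every $x$, and that $X_i$ is independent of $Y_1,\dots,Y_n$. Jensen then gives $\bE\sqrt{U^2+V^2}\le\sqrt{2}\,\|p_*\|_\infty/\sqrt n$. Combined with $\|p_*\|_\infty\le\psi'\bigl((6\|c\|_\infty+\varepsilon)/\varepsilon\bigr)$, this yields exactly \eqref{eq:imported-bias}.
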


The following lemma provides the first connection between regularized GW and regularized OT.
\begin{lemma}
\label{lemma:GW-linearization}
Suppose $\mu\in\cP_{rs}(\cX)$ and $\nu\in\cP_{rs}(\cY)$.
If $\pi^*$ is a finite minimizer of
\[
\pi\longmapsto
\cE_\Gamma(\pi)+\varepsilon \,\mathsf{D}_\varphi(\pi,\mu\otimes\nu)
\quad\text{over }\Pi(\mu,\nu),
\]
then we have for 
$c_*(x,y):=2\int\Gamma((x,y),(x',y'))\,d\pi^*(x',y')$,
\begin{equation}
\label{eq:GW-linearized-OT}
\pi^*\in
\operatorname*{argmin}_{\theta\in\Pi(\mu,\nu)}
\left\{
\int c_*\,d\theta
+
\varepsilon \,\mathsf{D}_\varphi(\theta,\mu\otimes\nu)
\right\}.
\end{equation}
\end{lemma}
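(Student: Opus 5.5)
Suppose, for contradiction, that some $\theta\in\Pi(\mu,\nu)$ gives a strictly smaller value of the linearized objective than $\pi^*$. We may take $\mathsf D_\varphi(\theta,\mu\otimes\nu)<\infty$, since otherwise $\theta$ cannot beat $\pi^*$. The plan is to perturb $\pi^*$ in the direction of $\theta$ and show that, to first order, the change in $\cF_\varepsilon^{\mu,\nu}$ equals the change in the linearized objective. Set
\[
\pi_t:=(1-t)\pi^*+t\theta,\qquad t\in(0,1).
\]
By convexity of $\Pi(\mu,\nu)$, each $\pi_t$ is feasible.

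\textbf{Step 1: the quadratic term.} We have $\Gamma\ge0$, and $\Gamma\le 2^{r-1}\bigl(d_\cX(x,x')^{rs}+d_\cY(y,y')^{rs}\bigr)$. Hence the assumption $\mu\in\cP_{rs}(\cX)$, $\nu\in\cP_{rs}(\cY)$ makes every cross integral $\int\!\!\int\Gamma\,d\sigma\,d\sigma'$ finite for $\sigma,\sigma'\in\Pi(\mu,\nu)$. The same bound gives $\int c_*\,d\sigma<\infty$. Expanding bilinearly and using the symmetry of $\Gamma$, we get
\[
\cE_\Gamma(\pi_t)=\cE_\Gamma(\pi^*)+t\int c_*\,d(\theta-\pi^*)+t^2\,\cE_\Gamma(\theta-\pi^*).
\]
All terms are finite, so
\[
\cE_\Gamma(\pi_t)-\cE_\Gamma(\pi^*)=t\int c_*\,d(\theta-\pi^*)+O(t^2).
\]

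\textbf{Step 2: the divergence term.} Convexity of $\sigma\mapsto\mathsf D_\varphi(\sigma,\mu\otimes\nu)$ gives
\[
\mathsf D_\varphi(\pi_t,\mu\otimes\nu)\le(1-t)\,\mathsf D_\varphi(\pi^*,\mu\otimes\nu)+t\,\mathsf D_\varphi(\theta,\mu\otimes\nu).
\]
Here $\mathsf D_\varphi(\pi^*,\mu\otimes\nu)$ is finite because $\pi^*$ is a finite minimizer. Combining with Step 1, minimality of $\pi^*$ yields
\[
0\le \cF_\varepsilon^{\mu,\nu}(\pi_t)-\cF_\varepsilon^{\mu,\nu}(\pi^*)
\le t\Bigl[\int c_*\,d\theta+\varepsilon\mathsf D_\varphi(\theta,\mu\otimes\nu)-\int c_*\,d\pi^*-\varepsilon\mathsf D_\varphi(\pi^*,\mu\otimes\nu)\Bigr]+O(t^2).
\]

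\textbf{Step 3: conclusion.} Divide by $t$ and let $t\downarrow0$. The bracket is then nonnegative, which contradicts the choice of $\theta$. Therefore $\pi^*$ belongs to the argmin in \eqref{eq:GW-linearized-OT}.

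The only delicate point is integrability. One must check that $\cE_\Gamma(\theta-\pi^*)$ and $\int c_*\,d\theta$ are finite, so that the expansion in Step 1 is legitimate and the $O(t^2)$ remainder is genuinely of order $t^2$. This comes entirely from the $rs$-moment hypothesis through the bound on $\Gamma$ above. Everything else is elementary convexity, and no uniqueness or curvature condition is needed.
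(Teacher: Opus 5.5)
Your proposal is correct and follows the paper's proof essentially step for step. Both arguments perturb along $\pi_t=(1-t)\pi^*+t\theta$, expand the quadratic energy bilinearly using the symmetry of $\Gamma$ to isolate the first-order term $t\int c_*\,d(\theta-\pi^*)$ with remainder $t^2\mathcal{E}_\Gamma(\theta-\pi^*)$, bound the divergence term by convexity, then divide by $t$ and let $t\downarrow 0$; your contradiction framing is only a cosmetic wrapper around that same direct inequality.
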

%\textcolor{pink}{This might be used as an iterative algorithm to compute GW optimizer.}

\subsection{Preliminaries}
\begin{proposition}[Finiteness and primal attainment]
\label{proposition:GW-attainment}
Suppose \Cref{assumption:regularizaion} holds.
Suppose that $\mu\in\cP_{rs}(\cX)$ and $\nu\in\cP_{rs}(\cY)$
for $r,s\in[1,\infty)$.
For every $\varepsilon>0$, the unregularized and regularized
problems have nonempty, weakly compact sets of minimizers, and
\begin{equation}\label{eq:gw-finite-value-bounds}
0
\le \mathsf{GW}_{r,s}(\mu,\nu)^r
\le S_{r,s,\varepsilon}(\mu,\nu)
\le \cE_\Gamma(\mu\otimes\nu)
<\infty.
\end{equation}
Every regularized minimizer $\pi^*$ satisfies
\[
\mathsf{D}_\varphi(\pi^*,\mu\otimes\nu)
\le \varepsilon^{-1}\cE_\Gamma(\mu\otimes\nu).
\]
\end{proposition}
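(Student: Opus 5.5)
The plan is to use the direct method of the calculus of variations on $\Pi(\mu,\nu)$ with the weak topology, handling the regularized and unregularized problems in parallel.

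\textbf{Value bounds.} First I would check that $\cE_\Gamma(\mu\otimes\nu)<\infty$. Use $|a-b|^r\le 2^{r-1}(a^r+b^r)$ together with $d_\cX(x,x')^{rs}\le 2^{rs-1}(d_\cX(x,x_0)^{rs}+d_\cX(x',x_0)^{rs})$, and similarly on $\cY$. Since $\mu\in\cP_{rs}(\cX)$ and $\nu\in\cP_{rs}(\cY)$, this gives a bound on $\Gamma$ by a sum of integrable functions under $\pi\otimes\pi$. Importantly, the bound is uniform over all $\pi\in\Pi(\mu,\nu)$, because it depends only on the marginals.

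The chain \eqref{eq:gw-finite-value-bounds} then follows quickly:
\begin{itemize}
\item $\Gamma\ge0$ gives nonnegativity.
\item $\mathsf D_\varphi\ge0$ by Jensen, since $\varphi(1)=0$, so $\mathsf{GW}^r\le S$.
\item Testing with $\pi=\mu\otimes\nu$, which has $\mathsf D_\varphi=0$, gives the upper bound.
\end{itemize}
The divergence bound for a minimizer is immediate: $\varepsilon\mathsf D_\varphi(\pi^*,\mu\otimes\nu)\le S\le\cE_\Gamma(\mu\otimes\nu)$ because $\cE_\Gamma(\pi^*)\ge0$.

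\textbf{Existence.} $\Pi(\mu,\nu)$ is tight by Prokhorov and weakly closed, hence weakly compact. I would then show both $\cE_\Gamma$ and $\mathsf D_\varphi(\cdot,\mu\otimes\nu)$ are weakly lower semicontinuous on $\Pi(\mu,\nu)$.

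\begin{itemize}
\item For $\cE_\Gamma$: if $\pi_n\to\pi$ weakly, then $\pi_n\otimes\pi_n\to\pi\otimes\pi$ weakly on the product space. $\Gamma$ is continuous and nonnegative, so the portmanteau theorem applied to the truncations $\Gamma\wedge M$, followed by monotone convergence in $M$, gives lower semicontinuity.
\item For $\mathsf D_\varphi$: with the reference measure $\mu\otimes\nu$ fixed, use the Donsker--Varadhan-type dual representation
\[
\mathsf D_\varphi(Q,R)=\sup_{h\in C_b}\Bigl\{\int h\,dQ-\int\psi(h)\,dR\Bigr\}.
\]
This is a supremum of weakly continuous functionals of $Q$, hence lower semicontinuous.
\end{itemize}

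This dual formula is where I expect the main care to be needed. Its validity requires superlinearity of $\varphi$ (so that $\psi$ is finite everywhere) and the convention $\mathsf D_\varphi=+\infty$ when $Q\not\ll R$. Superlinearity is what forces absolute continuity in the limit. Alternatively, I could cite the standard lower semicontinuity of $f$-divergences jointly in $(Q,R)$.

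A minimizing sequence then has a weakly convergent subsequence whose limit is a minimizer. The set of minimizers equals $\{\pi:\cF\le S\}$, which is a closed subset of a compact set, hence compact. The unregularized case is identical with the divergence term dropped.
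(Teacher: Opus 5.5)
Your proposal is correct and follows essentially the same route as the paper: an integrable bound on $\Gamma$ that depends only on the marginals gives the value chain and the divergence bound, Prokhorov gives compactness of $\Pi(\mu,\nu)$, Portmanteau gives lower semicontinuity of $\cE_\Gamma$, and finiteness of $\psi$ (from superlinearity of $\varphi$) gives lower semicontinuity of $\mathsf{D}_\varphi(\cdot,\mu\otimes\nu)$. The paper obtains that last step by citing the variational-representation result of Birrell et al.\ rather than writing out the dual formula. The remaining differences are immaterial: your cruder bound $|a-b|^r\le 2^{r-1}(a^r+b^r)$ in place of the paper's $|a^s-b^s|^r\le a^{rs}+b^{rs}$, and your explicit truncation argument.
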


\begin{lemma}[Regularity of the conjugate]
\label{lemma:conjugate-regularity}
Suppose \Cref{assumption:regularizaion} holds.
Then $\psi$ is finite and belongs to $\cC^1(\bR)$, with
$\psi'\ge0$, $\psi'(1)=1$, and $\psi'(s)\to\infty$ as $s\to\infty$.
Moreover, $\psi$ is strictly convex on $[1-\delta,\infty)$
for some $\delta>0$.
For every compact interval $I$, there exists $L_I<\infty$ such that
\begin{equation}
\label{eq:holder-psi}
|\psi'(s)-\psi'(t)|\le L_I|s-t|^\frac{1}{1+\beta},
\qquad s,t\in I.
\end{equation}
For every $\theta\in(0,1)$, there also exists
$\kappa_{I,\theta}>0$ such that
\begin{equation}
\label{eq:active-curvature}
B_\psi(s,t):=\psi(s)-\psi(t)-\psi'(t)(s-t)
\ge\kappa_{I,\theta}|s-t|^2
\quad\text{if }s,t\in I,\ \psi'(t)\ge\theta.
\end{equation}
\end{lemma}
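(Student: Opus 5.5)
The plan is to read everything off the Legendre duality between $\varphi$ and $\psi$, using that $\varphi\in\cC^2((0,\infty))$ is strictly convex and superlinear. Finiteness of $\psi(s)=\sup_{t\ge0}(st-\varphi(t))$ follows from $\varphi(t)/t\to\infty$: for fixed $s$ the supremum may be restricted to a compact interval $[0,T_s]$, and $\varphi$ is continuous there. Strict convexity of $\varphi$ makes the maximizer $t(s)$ unique. By Danskin's theorem, or the standard fact that a convex conjugate is differentiable exactly where the maximizer is unique, $\psi$ is differentiable with $\psi'(s)=t(s)\ge0$. Continuity of $s\mapsto t(s)$ follows by compactness and uniqueness of the maximizer, so $\psi\in\cC^1(\bR)$.

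To describe $\psi'$ explicitly, set $s_0:=\lim_{t\downarrow0}\varphi'(t)\in[-\infty,\infty)$. Then $\psi'(s)=(\varphi')^{-1}(s)>0$ for $s>s_0$ and $\psi'(s)=0$ for $s\le s_0$. Since $\varphi'(1)=1$, the maximizer at $s=1$ is $t=1$, so $\psi'(1)=1$. Because $\varphi'$ is finite on $(0,\infty)$, $(\varphi')^{-1}(s)\to\infty$ as $s\to\infty$. Strict convexity of $\varphi$ gives $s_0<\varphi'(1)=1$. Choosing $\delta$ with $1-\delta>s_0$, the derivative $\psi'=(\varphi')^{-1}$ is strictly increasing on $[1-\delta,\infty)$, which gives strict convexity of $\psi$ there.

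For the H\"older bound \eqref{eq:holder-psi}, fix $I$ and set $M:=\psi'(\sup I)$. For $0<r\le M$, \eqref{eq:Aphi} gives
\[
1/\varphi''(r)\le r^{-\beta}\bigl(\lambda_0+(\lambda_1+\lambda_2M)M^\beta\bigr),
\]
so $\varphi''(r)\ge c\,r^\beta$ with $c>0$ depending only on $I$. Take $s>t$ in $I$ and write $u=\psi'(s)$, $v=\psi'(t)$.
\begin{itemize}
\item If $u>0$, then $s\ge\varphi'(u)$ (with equality unless $u=0$), and $t\ge\varphi'(v^+)$, with $t=\varphi'(v)$ when $v>0$ and $t\le s_0$ when $v=0$.
\item In either case $s-t\ge\varphi'(u)-\varphi'(v^+)=\int_v^u\varphi''$.
\item Hence $s-t\ge c\int_v^u r^\beta\,dr\ge c\int_0^{u-v}r^\beta\,dr=\frac{c}{1+\beta}(u-v)^{1+\beta}$, using that $r\mapsto r^\beta$ is nondecreasing.
\end{itemize}
Inverting gives $|\psi'(s)-\psi'(t)|\le L_I|s-t|^{1/(1+\beta)}$. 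The case $u=0$ is trivial.

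For the curvature bound \eqref{eq:active-curvature}, which I expect to be the only slightly delicate step, write $B_\psi(s,t)=\int_t^s(\psi'(r)-\psi'(t))\,dr$. Where $\psi'(r)\ge\theta/2$, the inverse function theorem gives $\psi''(r)=1/\varphi''(\psi'(r))\ge 2\kappa'$, where $2\kappa':=1/\max_{[\theta/2,M]}\varphi''>0$ by continuity of $\varphi''$. The two orientations are handled separately.
\begin{itemize}
\item If $s\ge t$, then $\psi'\ge\psi'(t)\ge\theta$ on $[t,s]$. So $\psi'(r)-\psi'(t)\ge2\kappa'(r-t)$, and integrating gives $B_\psi\ge\kappa'(s-t)^2$.
\item If $s<t$, $\psi'$ may fall below $\theta$ on $[s,t]$ (even to $0$). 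Here I use the pointwise bound $\psi'(t)-\psi'(r)\ge\min\{2\kappa'(t-r),\theta/2\}$: either $\psi'\ge\theta/2$ on all of $[r,t]$, or $\psi'$ has already dropped by at least $\theta/2$.
\item Since $t-r\le|I|$, this gives $\min\{2\kappa'(t-r),\theta/2\}\ge\min\{2\kappa',\theta/(2|I|)\}(t-r)$.
\item Integrating yields $B_\psi(s,t)\ge\kappa_{I,\theta}|s-t|^2$ with $\kappa_{I,\theta}:=\tfrac12\min\{2\kappa',\theta/(2|I|)\}$.
\end{itemize}
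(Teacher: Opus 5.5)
Your proof is correct. The first two parts essentially coincide with the paper's argument. The paper also identifies $\psi'=(\varphi')^{-1}$ on $(\ell,\infty)$ and $\psi'=0$ below $\ell$, citing standard convex conjugacy where you use a Danskin-type uniqueness argument. It derives the H\"older bound exactly as you do, from $t-s\ge\int_a^b\varphi''\ge c\int_0^{b-a}r^\beta\,dr$ with $\varphi''(r)\gtrsim r^\beta$ on $(0,1+\max_I\psi']$.

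The curvature bound is where you take a different route.

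\emph{The paper's route.} It sets $m:=\min\psi''$ over $[\varphi'(\theta/2),\sup I]$ and chooses a contraction factor $\rho=\min\{1,(\varphi'(\theta)-\varphi'(\theta/2))/(1+\operatorname{diam}I)\}$. This guarantees that the shrunken point $t+\rho(s-t)$ stays in the region where $\psi''\ge m$, whatever the position of $s$ relative to $t$. Convexity of $B_\psi(\cdot,t)$ together with $B_\psi(t,t)=0$ then gives $\tfrac{m\rho^2}{2}|s-t|^2\le B_\psi(t+\rho(s-t),t)\le\rho B_\psi(s,t)$.

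\emph{Your route.} You write $B_\psi(s,t)$ as the integral of the increment of $\psi'$ and treat $s\ge t$ directly. For $s<t$ you use the pointwise bound $\psi'(t)-\psi'(r)\ge\min\{2\kappa'(t-r),\theta/2\}$, which holds by monotonicity of $\psi'$.

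\emph{Comparison.} The paper's contraction trick covers both orientations at once and needs no case split. Your version is more hands-on and makes explicit the two mechanisms at work: genuine curvature while $\psi'$ stays above $\theta/2$, versus a definite drop of $\psi'$ once it leaves that region. Your resulting constant $\tfrac12\min\{2\kappa',\theta/(2|I|)\}$ is of the same nature as the paper's $m\rho/2$.
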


\begin{assumption}[Moment conditions]
\label{assumption:Agamma}
Let $r,s\in[1,\infty)$ and $p\in[1,\infty]$.
A pair of probability measures $(\mu,\nu)\in\cP(\cX)\times\cP(\cY)$ satisfies this assumption for $(r,s,p)$ if the applicable condition below holds:
\begin{itemize}
\item
For $1<p<\infty$, 
$\mu\in\cP_{a_p}(\cX), \nu\in\cP_{a_p}(\cY)$  with 
$
a_p:=
p\vee\frac{(rs-1)p}{p-1}
.
$

\item
For $p=1$ and $rs=1$,  $\mu\in\cP_1(\cX),
\nu\in\cP_1(\cY)$.
\item 
For $p=1$ and $rs>1$, both $\mu$ and $\nu$
have bounded support.
\item
For  $p=\infty$, $\mu\in\cP_{rs}(\cX),
\nu\in\cP_{rs}(\cY)$ .
\end{itemize}
\end{assumption}

\begin{proposition}[Weakened Lipschitz continuity]\label{proposition:Agamma}
Suppose that both $(\mu,\nu)$ and $(\tilde\mu,\tilde\nu)$ satisfy \Cref{assumption:Agamma} for the same $r,s\in[1,\infty)$ and $p\in[1,\infty]$. Then there exists $\Lambda>0$ such that
\begin{equation}\label{eq:Agamma}
\left|\cE_{\Gamma_{r,s}}(\pi)-\cE_{\Gamma_{r,s}}(\widetilde\pi)\right|
\le\Lambda W_p(\pi,\widetilde\pi)
\end{equation}
for all $\pi\in\Pi(\mu,\nu)$ and
$\widetilde\pi\in\Pi(\tilde\mu,\tilde\nu)$.
\end{proposition}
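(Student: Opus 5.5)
We couple the two couplings optimally and use the bilinear structure of $\cE_\Gamma$. Let $\gamma$ be an optimal coupling for $W_p(\pi,\widetilde\pi)$ on $(\cX\times\cY)^2$, with $(Z,\tilde Z)\sim\gamma$ and an independent copy $(Z',\tilde Z')\sim\gamma$. Write $Z=(X,Y)$, $\tilde Z=(\tilde X,\tilde Y)$, and similarly for the primed variables. Then
\[
\cE_\Gamma(\pi)-\cE_\Gamma(\widetilde\pi)=\bE\bigl[\Gamma(Z,Z')-\Gamma(\tilde Z,\tilde Z')\bigr].
\]
Set $u=d_\cX(X,X')^s-d_\cY(Y,Y')^s$ and let $\tilde u$ be the analogous quantity for the tilde variables. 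Then $\bigl||u|^r-|\tilde u|^r\bigr|\le r(|u|\vee|\tilde u|)^{r-1}|u-\tilde u|$. Applying the same mean value bound to $t\mapsto t^s$, together with the triangle inequality $|d_\cX(X,X')-d_\cX(\tilde X,\tilde X')|\le d_\cX(X,\tilde X)+d_\cX(X',\tilde X')$, gives a pointwise bound
\[
|\Gamma(Z,Z')-\Gamma(\tilde Z,\tilde Z')|\le C\,M^{rs-1}\bigl(d(Z,\tilde Z)+d(Z',\tilde Z')\bigr).
\]
Here $M$ is a sum of terms like $d_\cX(X,x_0)+d_\cX(X',x_0)+d_\cX(\tilde X,x_0)+\dots$ over all eight coordinates, and $d$ is the product metric (all product metrics are equivalent up to constants). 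We will use $|u|\le M^s$ and $t^{s-1}$-growth, giving $(M^s)^{r-1}M^{s-1}=M^{rs-1}$. By symmetry between $(Z,\tilde Z)$ and $(Z',\tilde Z')$, it suffices to bound $\bE[M^{rs-1}d(Z,\tilde Z)]$.

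The cases follow \Cref{assumption:Agamma}. For $1<p<\infty$, Hölder's inequality with exponents $p$ and $p'=p/(p-1)$ gives
\[
\bE[M^{rs-1}d(Z,\tilde Z)]\le \bigl(\bE M^{(rs-1)p'}\bigr)^{1/p'}\,W_p(\pi,\widetilde\pi).
\]
The first factor is finite because each coordinate of $M$ has the law of one of the marginals $\mu,\nu,\tilde\mu,\tilde\nu$, which have finite moments of order $a_p\ge (rs-1)p/(p-1)$. The constant $\Lambda$ therefore depends only on these marginal moments, uniformly over the couplings. For $p=\infty$, we have $d(Z,\tilde Z)\le W_\infty(\pi,\widetilde\pi)$ a.s., so we need only $\bE M^{rs-1}<\infty$, which follows from $rs$-moments. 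For $p=1$ with $rs>1$, bounded supports make $M$ bounded, and the bound is immediate. For $p=1$ with $rs=1$ (so $r=s=1$), $M^0=1$ and $\Gamma$ itself is $2$-Lipschitz by the reverse triangle inequality, so first moments suffice just for finiteness.

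The main obstacle is ensuring that the constant is uniform over all couplings and not merely finite for each pair. This holds because each variable in $M$ has a fixed marginal law regardless of $\pi,\widetilde\pi$, although $(X,X')$ and $(X,\tilde X)$ are dependent; Minkowski's inequality on the sum defining $M$ handles this. One caveat is that the mean value step with $s\ge1$ and $r\ge1$ must use $\max$ bounds that are summable into $M$, and we take care that no factor $\min(\cdot)^{s-1}$ with a negative exponent appears. Since $r,s\ge1$, all exponents are nonnegative, so this is fine.
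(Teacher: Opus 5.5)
Your proposal is correct and follows essentially the same route as the paper: couple $\pi$ and $\widetilde\pi$ through $\gamma\otimes\gamma$, bound the kernel difference pointwise by $rs\,M^{rs-1}$ times the sum of the two displacements (mean value theorem plus triangle inequality), then use H\"older with exponents $p,p'$, the $L^1$--$L^\infty$ pairing for $p=\infty$, or bounded supports for $p=1$, with uniformity over couplings coming from the fixed marginal laws. The only differences are cosmetic. Your $M$ is a sum of distances to base points rather than the paper's maximum of the four pairwise distances, and you take an optimal $\gamma$ rather than an arbitrary finite-cost $\gamma$ followed by an infimum.
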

We define, for $a,b\ge0$,
\[
b_\varphi(a\mid b)
:=
\lim_{t\downarrow0}
\frac{(1-t)\varphi(b)+t\varphi(a)
-\varphi((1-t)b+ta)}{t}
\in[0,\infty].
\]
The limit exists by convexity and agrees with
$\varphi(a)-\varphi(b)-\varphi'(b)(a-b)$ when $b>0$.
Moreover, $b_\varphi(0\mid0)=0$, and, for $a>0$,
$b_\varphi(a\mid0)
=\varphi(a)-\varphi(0)-a\varphi'_+(0)$,
with value $+\infty$ when $\varphi'_+(0)=-\infty$.
For probability measures $Q,R\ll \mu\otimes\nu$, set
\begin{equation*}
B^{\mu,\nu}_\varphi(Q\mid R)
:=
\int_{\cX\times\cY}
b_\varphi\!\left(
\frac{dQ}{d (\mu\otimes\nu) }\,\middle|\,\frac{dR}{d (\mu\otimes\nu)}
\right)d (\mu\otimes\nu).
\end{equation*}
As ensured by \Cref{proposition:GW-attainment}, minimizers exist and take $\pi^*\in\operatorname*{argmin}_{\pi\in\Pi(\mu,\nu)}\cF_\varepsilon^{\mu,\nu}(\pi)$.
\begin{assumption}[Curvature domination]\label{assumption:Cgamma}
We assume
$
\varepsilon>\kappa
$
for $\kappa\ge0$ such that for every $\theta\in\Pi(\mu,\nu)$ with 
$
\mu\in\mathcal P_{rs}(\mathcal X),
\nu\in\mathcal P_{rs}(\mathcal Y)
$
and
$\cF_\varepsilon^{\mu,\nu}(\theta)<\infty$,
\begin{equation}\label{eq:Cgamma}
\cE_\Gamma(\theta-\pi^*)
\ge-\kappa B^{\mu,\nu}_\varphi(\theta\mid\pi^*).
\end{equation}

\end{assumption}

\begin{remark}
The assumption on the regularization parameter $\varepsilon$ to be sufficiently large to guarantee the uniqueness of the optimizer is common in the literature related to GW distance; see, e.g., \cite[Theorem 6]{rioux_kengo2024entropicGW}. The following verification of \Cref{assumption:Cgamma} is in the same spirit.
\end{remark}

\begin{remark}[Verification of \Cref{assumption:Cgamma}]
\label{remark:Cgamma-verification}
We provide sufficient conditions for \Cref{assumption:Cgamma} under entropy regularization and Tsallis regularization.
Let $r,s\in[1,\infty)$, and let
$\pi^*$ be a finite minimizer of $\cF_\varepsilon^{\mu,\nu}$.
Fix $(x_0,y_0)\in\cX\times\cY$.
Assume
$\mu\in\cP_{2rs}(\cX)$ and $\nu\in\cP_{2rs}(\cY)$, and set
\[
w(x,y):=
1+d_{\cX}(x,x_0)^{rs}+d_{\cY}(y,y_0)^{rs},
\qquad
S:=\sup_{\eta\in\Pi(\mu,\nu)}\int w^2\,d\eta.
\]
Then
\begin{align}\label{inequality_gamma_bound_assumption_verificaion}
%\int w^2\,d \mu\otimes\nu\le 
S\le
3\left(
1+\int d_{\cX}(x,x_0)^{2rs}\,d\mu(x)
+\int d_{\cY}(y,y_0)^{2rs}\,d\nu(y)
\right)<\infty.
\end{align}
(i) Entropy regularization. It suffices to require
$
\varepsilon
>\kappa
=2^{rs-1}S.
$

\noindent
(ii) Tsallis regularization with $1<\alpha\le2$.
$
\varepsilon
>\kappa
=
\frac{2^{rs-1}}{\alpha-1}
\left(\int w^2\,d(\mu\otimes\nu)\right)^{\alpha-1}S^{2-\alpha}
$
would be sufficient.

\noindent
(iii) Tsallis regularization with $\alpha>2$ on compact supports. 
It suffices to require $\varepsilon \geq 5M$
with $M:=
\max\left\{
\operatorname{diam}(\operatorname{supp}\mu)^{rs},
\operatorname{diam}(\operatorname{supp}\nu)^{rs}
\right\}$. Under this setting,
$
\varepsilon>\kappa
:=
\frac{\alpha M}{2(\alpha-1)}
\left(1-\frac{4M}{\varepsilon}\right)^{-(\alpha-2)/(\alpha-1)}.
$
\end{remark}

\section{Quantitative stability of regularized Gromov-Wasserstein distance}\label{sec:quantitative_stability_regularized_GW}

We use the notion of a
shadow for the quantitative stability results.
This notion was first introduced by \cite{eckstein2022quantitative, EcksteinNutz_shadow}. 
The shadow of a coupling $\pi\in\Pi(\mu,\nu)$ is a particular
$W_p$-projection of $\pi$ onto $\Pi(\tilde\mu,\tilde\nu)$.
In addition to this metric property, the shadow has divergence
relative to the new product measure no greater than that of
$\pi$ relative to the original product measure.
These two properties make shadows useful for studying the
stability of regularized optimal transport.

\begin{definition}[Shadow]
\label{definition:shadow}
Let $p\in[1,+\infty]$ and $\mu,\tilde\mu\in\mathcal P_p(\mathcal X)$ and
$\nu,\tilde\nu\in\mathcal P_p(\mathcal Y)$.
Choose optimal transport plans
\[
\theta_{\mathcal X}
=
\mu\otimes K_{\mathcal X}
\in\Pi(\mu,\tilde\mu),
\qquad
\theta_{\mathcal Y}
=
\nu\otimes K_{\mathcal Y}
\in\Pi(\nu,\tilde\nu)
\]
for $W_p(\mu,\tilde\mu)$ and $W_p(\nu,\tilde\nu)$, respectively,
where $K_{\mathcal X}$ and $K_{\mathcal Y}$ are stochastic
kernels obtained by disintegration with respect to the first
marginals. Define the product kernel
\[
K\bigl((x,y),\cdot\bigr)
:=
K_{\mathcal X}(x,\cdot)
\otimes
K_{\mathcal Y}(y,\cdot).
\]
For $\pi\in\Pi(\mu,\nu)$, its shadow
$\tilde\pi\in\Pi(\tilde\mu,\tilde\nu)$, associated with the chosen
plans $\theta_{\mathcal X}$ and $\theta_{\mathcal Y}$, is the
second marginal of
\[
\pi\otimes K
\in
\mathcal P\bigl(
(\mathcal X\times\mathcal Y)
\times
(\mathcal X\times\mathcal Y)
\bigr).
\]
Equivalently,
\[
\tilde\pi(d\tilde x,d\tilde y)
=
\int_{\mathcal X\times\mathcal Y}
K_{\mathcal X}(x,d\tilde x)
K_{\mathcal Y}(y,d\tilde y)
\,\pi(dx,dy).
\]
\end{definition}

\begin{proposition}\cite[Lemma~3.2]{eckstein2022quantitative}\label{proposition_shadow_property}
For $\pi\in\Pi(\mu,\nu)$, its shadow $\tilde\pi\in\Pi(\tilde\mu,\tilde\nu)$ defined above satisfies 
\[
W_p(\pi,\tilde\pi)
=
W_p\bigl((\mu,\nu),(\widetilde\mu,\widetilde\nu)\bigr),
\qquad
\mathsf{D}_\varphi\bigl(
\tilde\pi,\tilde\mu\otimes\tilde\nu
\bigr)
\le
\mathsf{D}_\varphi\bigl(
\pi,\mu\otimes\nu
\bigr).
\]
\end{proposition}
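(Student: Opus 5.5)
The plan is to realize both $\pi$ and its shadow $\tilde\pi$ as marginals of the single measure $\pi\otimes K$ on $(\cX\times\cY)^2$. The Wasserstein identity then follows from a two-sided comparison, and the divergence inequality from conditional Jensen.

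\emph{Wasserstein identity, case $p<\infty$.} For the upper bound, note that $\pi\otimes K$ is a coupling of $\pi$ and $\tilde\pi$ by construction. Using the product metric $d_{\cX\times\cY,p}$ and the product form of $K$, its transport cost is
\[
\int\!\!\int \bigl(d_\cX(x,\tilde x)^p+d_\cY(y,\tilde y)^p\bigr)\,K_\cX(x,d\tilde x)K_\cY(y,d\tilde y)\,\pi(dx,dy).
\]
The first term depends only on $x$, and the first marginal of $\pi$ is $\mu$, so it integrates to $\int d_\cX^p\,d\theta_\cX=W_p^p(\mu,\tilde\mu)$. The second term likewise gives $W_p^p(\nu,\tilde\nu)$. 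Hence $W_p^p(\pi,\tilde\pi)\le \Delta^p$. For the lower bound, take any $\gamma\in\Pi(\pi,\tilde\pi)$. Its projection onto the $(x,\tilde x)$ coordinates lies in $\Pi(\mu,\tilde\mu)$, and its projection onto $(y,\tilde y)$ lies in $\Pi(\nu,\tilde\nu)$. The cost of $\gamma$ splits as the sum of the two projected costs, each bounded below by the corresponding $W_p^p$, so $W_p^p(\pi,\tilde\pi)\ge\Delta^p$.

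\emph{Wasserstein identity, case $p=\infty$.} The same argument works with $\gamma$-essential suprema and the maximum $d_\cX\vee d_\cY$. The support of $\pi\otimes K$ is controlled by the supports of $\theta_\cX$ and $\theta_\cY$, which gives the upper bound, and projecting $\gamma$ again gives the lower bound.

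\emph{Divergence inequality.} We may assume $\mathsf D_\varphi(\pi,\mu\otimes\nu)<\infty$, so $\pi=z\,(\mu\otimes\nu)$ for some density $z$. Set $\Theta:=(\mu\otimes\nu)\otimes K$. After reordering coordinates, $\Theta$ is exactly $\theta_\cX\otimes\theta_\cY$, so its second marginal is $\tilde\mu\otimes\tilde\nu$. Since $\pi\otimes K=z(x,y)\,\Theta$, taking second marginals gives $\tilde\pi\ll\tilde\mu\otimes\tilde\nu$ with density
\[
\tilde z(\tilde x,\tilde y)=\bE_\Theta\bigl[z(x,y)\,\big|\,(\tilde x,\tilde y)\bigr].
\]
Conditional Jensen for the convex function $\varphi$ yields $\varphi(\tilde z)\le\bE_\Theta[\varphi(z)\mid(\tilde x,\tilde y)]$. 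Integrating against $\tilde\mu\otimes\tilde\nu$ gives $\mathsf D_\varphi(\tilde\pi,\tilde\mu\otimes\tilde\nu)\le\int\varphi(z)\,d\Theta=\mathsf D_\varphi(\pi,\mu\otimes\nu)$.

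The main technical point is justifying conditional Jensen and the integrals in the divergence step. Since $\varphi$ is convex it is bounded below by an affine function $a+bt$. The negative part of $\varphi(z)$ is therefore dominated by $|a|+|b|z$, which is $\Theta$-integrable because $\int z\,d\Theta=1$. So all integrals are well defined with values in $(-\infty,\infty]$, and conditional Jensen applies to $z\ge0$.
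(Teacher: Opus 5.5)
Your proof is correct and follows essentially the same route as the source the paper relies on; the paper itself gives no argument beyond citing Lemma~3.2 of Eckstein--Nutz. In that standard argument, the coupling $\pi\otimes K$ gives $W_p(\pi,\tilde\pi)\le\Delta$, projecting an arbitrary coupling onto the $(x,\tilde x)$ and $(y,\tilde y)$ coordinates gives the reverse inequality, and your conditional-Jensen step is precisely the data-processing inequality for $\varphi$-divergences applied to $\tilde\pi=\pi K$ and $\tilde\mu\otimes\tilde\nu=(\mu\otimes\nu)K$.
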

The shadow need not be unique, since it may depend on the chosen optimal transport plans.

\begin{theorem}[Continuity of value]\label{thm:continuity_regualarized_GW_value}
Suppose $(\mu,\nu)$ and $(\tilde\mu,\tilde\nu)$ both satisfy \Cref{assumption:Agamma}. Then
\begin{align*}
|S_{r,s,\varepsilon}(\mu,\nu) 
- 
S_{r,s,\varepsilon}(\tilde\mu,\tilde\nu)| 
\leq
\Lambda W_p( (\mu,\nu),(\tilde\mu,\tilde\nu) )
\end{align*}
\end{theorem}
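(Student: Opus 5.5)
The argument is a symmetric comparison built on the shadow construction. By symmetry of the roles of $(\mu,\nu)$ and $(\tilde\mu,\tilde\nu)$, it suffices to prove the one-sided bound
\[
S_{r,s,\varepsilon}(\tilde\mu,\tilde\nu)\le S_{r,s,\varepsilon}(\mu,\nu)+\Lambda\,W_p\bigl((\mu,\nu),(\tilde\mu,\tilde\nu)\bigr),
\]
and then exchange the two pairs. The constant $\Lambda$ is the one from \Cref{proposition:Agamma}. That constant applies to both orderings, since the proposition is stated symmetrically for all $\pi\in\Pi(\mu,\nu)$ and $\widetilde\pi\in\Pi(\tilde\mu,\tilde\nu)$.

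First, \Cref{assumption:Agamma} in each of its cases implies $\mu\in\cP_{rs}(\cX)$ and $\nu\in\cP_{rs}(\cY)$, and likewise for the tilde pair. This holds because $a_p\ge rs$ when $1<p<\infty$: either $p\ge rs$, or $(rs-1)p/(p-1)\ge rs$ exactly when $p\le rs$. Bounded support covers the remaining cases. Hence \Cref{proposition:GW-attainment} applies, and there is a minimizer $\pi^*\in\Pi(\mu,\nu)$ with $\cF^{\mu,\nu}_\varepsilon(\pi^*)=S_{r,s,\varepsilon}(\mu,\nu)<\infty$. If attainment were unavailable, an $\eta$-minimizer followed by $\eta\downarrow0$ would serve equally well. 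We also need $\mu,\tilde\mu\in\cP_p$ for the shadow to be defined. This follows from the moment assumption when $1<p<\infty$ (since $a_p\ge p$) and when $p=1$. For $p=\infty$ we need $W_\infty$-optimal plans; if $\Delta=\infty$ the bound is trivial, and otherwise such plans exist.

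Next, let $\tilde\pi\in\Pi(\tilde\mu,\tilde\nu)$ be the shadow of $\pi^*$ from \Cref{definition:shadow}. By \Cref{proposition_shadow_property},
\[
W_p(\pi^*,\tilde\pi)=W_p\bigl((\mu,\nu),(\tilde\mu,\tilde\nu)\bigr)
\quad\text{and}\quad
\mathsf D_\varphi(\tilde\pi,\tilde\mu\otimes\tilde\nu)\le\mathsf D_\varphi(\pi^*,\mu\otimes\nu).
\]
Applying \Cref{proposition:Agamma} to the pair $(\pi^*,\tilde\pi)$ gives
\[
\cE_\Gamma(\tilde\pi)\le\cE_\Gamma(\pi^*)+\Lambda\,W_p\bigl((\mu,\nu),(\tilde\mu,\tilde\nu)\bigr).
\]
Combining the two estimates yields
\[
S_{r,s,\varepsilon}(\tilde\mu,\tilde\nu)\le\cF^{\tilde\mu,\tilde\nu}_\varepsilon(\tilde\pi)\le\cF^{\mu,\nu}_\varepsilon(\pi^*)+\Lambda\Delta=S_{r,s,\varepsilon}(\mu,\nu)+\Lambda\Delta.
\]
Swapping the two pairs gives the reverse inequality, which completes the proof.

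The proof has essentially no hard step, since all the analytic work lives in \Cref{proposition:Agamma} and \Cref{proposition_shadow_property}. The only points needing care are bookkeeping. One is the measure-theoretic setup of the shadow: the existence of optimal plans and their kernels, which requires Polish spaces and finite $p$-moments, handled as above including the $p=\infty$ case. The other is checking that the shadow's energy is finite, which follows from the marginal moments via $\cE_\Gamma(\tilde\pi)\lesssim$ the $rs$-moments of $\tilde\mu$ and $\tilde\nu$.
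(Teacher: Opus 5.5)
Your proof is correct and follows the paper's argument: take a minimizer $\pi^*$ for $(\mu,\nu)$, pass to its shadow in $\Pi(\tilde\mu,\tilde\nu)$, bound the energy by \Cref{proposition:Agamma} and the divergence by \Cref{proposition_shadow_property}, then conclude by symmetry. Your extra bookkeeping is sound and only makes explicit what the paper leaves implicit: that \Cref{assumption:Agamma} supplies the $rs$- and $p$-moments needed for attainment and for the shadow, and that $W_\infty$-optimal plans exist when $\Delta<\infty$.
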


\begin{proof}
Let $\pi^*$ be an optimizer of $S_{r,s,\varepsilon}(\mu,\nu)$ and $\widetilde\pi\in\Pi(\tilde\mu,\tilde\nu)$ be its shadow. \Cref{proposition_shadow_property} gives that
\begin{align*}
S_{r,s,\varepsilon}(\mu,\nu)
&=
\cE_\Gamma (\pi^*) + \varepsilon \,\mathsf{D}_\varphi(\pi^*,\mu\otimes\nu)\\
&\geq 
\cE_\Gamma (\tilde\pi) - \Lambda W_p((\mu,\nu),(\tilde\mu,\tilde\nu)) + \varepsilon \,\mathsf{D}_\varphi(\widetilde\pi,\tilde\mu\otimes\tilde\nu)\\
&\geq 
S_{r,s,\varepsilon}(\tilde\mu,\tilde\nu) - \Lambda W_p((\mu,\nu),(\tilde\mu,\tilde\nu)). 
\end{align*}  
The claim follows by symmetry.
\end{proof}

\begin{lemma}\label{lemma:strong-convexity-tv}
Suppose \Cref{assumption:regularizaion} and  \Cref{assumption:Cgamma} hold. If
$\pi\in\Pi(\mu,\nu)$ and $\pi^*$ is the optimizer of $S_{r,s,\varepsilon}$
and $h:\cX\times\cY\to[0,\infty)$ is measurable with
$\int_{\cX\times\cY}h^2\,d(\mu\otimes\nu+\pi^*+\pi)<\infty$,
then
\begin{align}\label{eq:objective-tv}
\|h(\pi&-\pi^*)\|_{\mathrm{TV}}^{\beta+2}\nonumber\\
&\le
\frac{ (\beta+1)(\beta+2)3^{\beta/2}((\lambda_0+\lambda_1)\vee\lambda_2)}{\varepsilon-\kappa}
\|h\|_{L^2(\mu\otimes\nu+\pi^*+\pi)}^{\beta+2}
\bigl(\cF_\varepsilon^{\mu,\nu}(\pi)
-
\cF_\varepsilon^{\mu,\nu}(\pi^*)\bigr).
\end{align}
\end{lemma}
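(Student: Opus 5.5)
The argument has two parts. First, show that the objective gap dominates $(\varepsilon-\kappa)$ times the Bregman divergence $B^{\mu,\nu}_\varphi(\pi\mid\pi^*)$. Second, bound the weighted total variation by that Bregman divergence using \Cref{assumption:regularizaion}.

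\emph{Step 1: objective gap versus Bregman divergence.} Set $\pi_t=(1-t)\pi^*+t\pi$ for $t\in(0,1)$. By optimality, $\cF_\varepsilon^{\mu,\nu}(\pi_t)\ge\cF_\varepsilon^{\mu,\nu}(\pi^*)$. We may assume $\cF_\varepsilon^{\mu,\nu}(\pi)<\infty$, since otherwise there is nothing to prove.

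Expand both terms of $\cF_\varepsilon^{\mu,\nu}(\pi_t)$:
\begin{itemize}
\item The quadratic identity from the introduction gives $\cE_\Gamma(\pi_t)=(1-t)\cE_\Gamma(\pi^*)+t\cE_\Gamma(\pi)-t(1-t)\cE_\Gamma(\pi-\pi^*)$.
\item By the definition of $b_\varphi$ as a limit of convexity defects, the divergence satisfies $\mathsf D_\varphi(\pi_t)=(1-t)\mathsf D_\varphi(\pi^*)+t\mathsf D_\varphi(\pi)-t\,B^{\mu,\nu}_\varphi(\pi\mid\pi^*)+o(t)$. The convexity defect divided by $t$ is monotone in $t$, so monotone convergence justifies passing to the limit.
\end{itemize}
Insert both expansions into the optimality inequality, divide by $t$, and let $t\downarrow0$. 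This yields
\[
\cF_\varepsilon^{\mu,\nu}(\pi)-\cF_\varepsilon^{\mu,\nu}(\pi^*)\ge \cE_\Gamma(\pi-\pi^*)+\varepsilon B^{\mu,\nu}_\varphi(\pi\mid\pi^*)\ge(\varepsilon-\kappa)B^{\mu,\nu}_\varphi(\pi\mid\pi^*),
\]
where the last inequality is \Cref{assumption:Cgamma} applied with $\theta=\pi$.

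\emph{Step 2: total variation versus Bregman divergence.} Write $a=d\pi/d(\mu\otimes\nu)$ and $b=d\pi^*/d(\mu\otimes\nu)$. The pointwise goal is
\[
|a-b|^{\beta+2}\le C_\beta\,(\lambda_0 m^{-\beta}+\lambda_1+\lambda_2 m)\,m^{\beta}\,b_\varphi(a\mid b),\qquad m:=\max(a,b),
\]
or an inequality of similar shape. To get it, use the integral form of the Bregman remainder,
\[
b_\varphi(a\mid b)=\int_b^a(a-u)\varphi''(u)\,du,
\]
together with $\varphi''(u)\ge(\lambda_0u^{-\beta}+\lambda_1+\lambda_2u)^{-1}$. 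For $u$ between $a$ and $b$ one has $\lambda_0u^{-\beta}+\lambda_1+\lambda_2u\le u^{-\beta}\bigl((\lambda_0+\lambda_1)\vee\lambda_2\bigr)(1+u)^{1+\beta}$, roughly. Then $\int_b^a (a-u)u^\beta(1+u)^{-1-\beta}du$ is controlled from below by $|a-b|^{\beta+2}/\bigl((\beta+1)(\beta+2)(1+a+b)^{\beta+1}\bigr)$, after arguing separately in the cases $a\ge b$ and $a<b$. The factor $(\beta+1)(\beta+2)$ comes from $\int_0^{\delta}(\delta-v)v^\beta\,dv=\delta^{\beta+2}/\bigl((\beta+1)(\beta+2)\bigr)$. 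The boundary case $b=0$ is handled through $b_\varphi(a\mid0)$.

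\emph{Step 3: integrate with Hölder.} Write
\[
\|h(\pi-\pi^*)\|_{\mathrm{TV}}=\int h|a-b|\,d(\mu\otimes\nu)=\int \frac{h\,w^{(\beta+1)/(\beta+2)}\,|a-b|}{w^{(\beta+1)/(\beta+2)}}\,d(\mu\otimes\nu),\qquad w=1+a+b.
\]
Apply Hölder with exponents $\beta+2$ and $(\beta+2)/(\beta+1)$. The $|a-b|^{\beta+2}/w^{\beta+1}$ factor is bounded by Step 2 in terms of $B_\varphi$. The remaining factor $\int h^{(\beta+2)/(\beta+1)}w\,d(\mu\otimes\nu)$ is converted to $\|h\|_{L^2(\mu\otimes\nu+\pi^*+\pi)}$ by a further Hölder/Jensen step, since $w\,d(\mu\otimes\nu)=d(\mu\otimes\nu+\pi+\pi^*)$ is a measure of mass $3$. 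That mass is the source of the $3^{\beta/2}$ factor. Combining with Step 1 gives \eqref{eq:objective-tv}.

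\emph{Main obstacle.} The pointwise inequality in Step 2 with exactly the stated constants is delicate because the weight $u^{-\beta}$ degenerates near $0$. The plan is to prove $b_\varphi(a\mid b)\ge |a-b|^{\beta+2}/\bigl((\beta+1)(\beta+2)K(1+a+b)^{\beta+1}\bigr)$ with $K=(\lambda_0+\lambda_1)\vee\lambda_2$, splitting into $u\le1$ and $u\ge1$. I expect that matching the precise exponent pattern in the final Hölder step may require adjusting the weight to $(a+b)$ or $(1+a+b)$.
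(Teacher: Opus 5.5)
Your proposal is correct and follows essentially the paper's proof. It has the same three steps: the interpolation argument giving $\cF_\varepsilon^{\mu,\nu}(\pi)-\cF_\varepsilon^{\mu,\nu}(\pi^*)\ge(\varepsilon-\kappa)B^{\mu,\nu}_\varphi(\pi\mid\pi^*)$ (you use monotone convergence of the convexity defect where the paper uses Fatou); the pointwise Taylor-remainder bound $b_\varphi(a\mid b)\ge |a-b|^{\beta+2}/\bigl((\beta+1)(\beta+2)((\lambda_0+\lambda_1)\vee\lambda_2)(1+a+b)^{\beta+1}\bigr)$; and H\"older with exponents $\beta+2$ and $(\beta+2)/(\beta+1)$, followed by Jensen on the mass-$3$ measure. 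Your hedges are unnecessary: $\lambda_0+\lambda_1u^\beta+\lambda_2u^{\beta+1}\le((\lambda_0+\lambda_1)\vee\lambda_2)(1+u)^{\beta+1}$ holds exactly, no split at $u=1$ is needed, and the weight $1+a+b$ already matches the H\"older step as you wrote it.
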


\begin{proof}
Without loss of generality, we assume $\cF_\varepsilon^{\mu,\nu}(\pi) < \infty$. Otherwise  the inequality to show is trivial.
We first show that
\begin{equation}\label{eq:coercivity}
\cF_\varepsilon^{\mu,\nu}(\pi)-\cF_\varepsilon^{\mu,\nu}(\pi^*)
\ge
(\varepsilon-\kappa)B^{\mu,\nu}_\varphi(\pi\mid\pi^*).
\end{equation}
For $t\in[0,1]$, let $\pi_t:=(1-t)\pi^*+t\pi$.
Then
\begin{equation}\label{eq:energy-expansion}
\cE_\Gamma(\pi_t)
=(1-t)\cE_\Gamma(\pi^*)+t\cE_\Gamma(\pi)
-t(1-t)\cE_\Gamma(\pi-\pi^*).
\end{equation}
Write $f_*:=d\pi^*/d(\mu\otimes\nu)$, $f:=d\pi/d(\mu\otimes\nu)$, and
$f_t:=(1-t)f_*+tf$. The convexity gap
\[
J_t
:=(1-t)\,\mathsf{D}_\varphi(\pi^*,\mu\otimes\nu)+t\,\mathsf{D}_\varphi(\pi,\mu\otimes\nu)
-\mathsf{D}_\varphi(\pi_t,\mu\otimes\nu)
\]
is finite and nonnegative. Combining its defining identity with
\eqref{eq:energy-expansion} and using optimality gives
\begin{equation}\label{eq:near-optimal}
\cF_\varepsilon^{\mu,\nu}(\pi)-\cF_\varepsilon^{\mu,\nu}(\pi^*)
\ge
(1-t)\cE_\Gamma(\pi-\pi^*)+\varepsilon\frac{J_t}{t},
\qquad 0<t<1.
\end{equation}
The integrand defining $J_t$ is nonnegative. Hence Fatou's lemma,
the definition of $B^{\mu,\nu}_\varphi$, and \eqref{eq:near-optimal} imply
\begin{align*}
B^{\mu,\nu}_\varphi(\pi\mid\pi^*)
&=
\int_{\cX\times\cY}\lim_{t\downarrow0}
\frac{(1-t)\varphi(f_*)+t\varphi(f)-\varphi(f_t)}{t}\,d\mu\otimes\nu\\
&\le\liminf_{t\downarrow0}\frac{J_t}{t}
\le
\frac{
\cF_\varepsilon^{\mu,\nu}(\pi)-\cF_\varepsilon^{\mu,\nu}(\pi^*)-\cE_\Gamma(\pi-\pi^*)
}{\varepsilon}
<\infty.
\end{align*}
Thus the Bregman divergence is finite, and rearranging gives
\[
\cF_\varepsilon^{\mu,\nu}(\pi)-\cF_\varepsilon^{\mu,\nu}(\pi^*)
\ge
\cE_\Gamma(\pi-\pi^*)+\varepsilon B^{\mu,\nu}_\varphi(\pi\mid\pi^*)
\ge
(\varepsilon-\kappa)B^{\mu,\nu}_\varphi(\pi\mid\pi^*),
\]
where the last step uses \eqref{eq:Cgamma}.
This proves \eqref{eq:coercivity}.
Next, \Cref{assumption:regularizaion} gives, for $u>0$,
\[
\varphi''(u)
\ge
\frac{u^{\beta}}
{\lambda_0+\lambda_1u^{\beta}
+\lambda_2u^{\beta+1}}
\ge
\frac{u^{\beta}}
{((\lambda_0+\lambda_1)\vee\lambda_2)(1+u)^{\beta+1}}.
\]
Take $a,b>0$, it holds that
\[
\int_0^1(1-s)((1-s)b+sa)^\beta\,ds
\ge
\frac{(a\vee b)^\beta}{(\beta+1)(\beta+2)}.
\]
Indeed, if \(a\ge b\), then $(1-s)b+sa\ge sa$,
and hence
\[
\int_0^1(1-s)((1-s)b+sa)^\beta\,ds
\ge
a^\beta\int_0^1(1-s)s^\beta\,ds
=
\frac{a^\beta}{(\beta+1)(\beta+2)}.
\]
If \(b\ge a\), then $(1-s)b+sa\ge (1-s)b$,
so
\[
\int_0^1(1-s)((1-s)b+sa)^\beta\,ds
\ge
b^\beta\int_0^1(1-s)^{\beta+1}\,ds
=
\frac{b^\beta}{\beta+2}
\ge
\frac{b^\beta}{(\beta+1)(\beta+2)}.
\]
Taylor's formula therefore yields
\begin{align}\label{eq:pointwise-bregman-lower}
b_\varphi(a\mid b)
&=(a-b)^2\int_0^1(1-s)
\varphi''((1-s)b+sa)\,ds
\\
&\ge
\frac{(a-b)^2(a \vee b)^{\beta}}
{(\beta+1)(\beta+2)((\lambda_0+\lambda_1)\vee\lambda_2)
(1+a \vee b)^{\beta+1}}\nonumber\\
&\ge
\frac{|a-b|^{\beta+2}}
{(\beta+1)(\beta+2)((\lambda_0+\lambda_1)\vee\lambda_2)
(1+a+b)^{\beta+1}}\nonumber.
\end{align}
The same inequality holds for \(a,b\ge0\): apply
\eqref{eq:pointwise-bregman-lower} to \(a+\delta,b+\delta\) and let
\(\delta\downarrow0\). By the boundary definition of \(b_\varphi\), $b_\varphi(a+\delta\mid b+\delta)\to b_\varphi(a\mid b)$,
with convergence to \(+\infty\) allowed when
\(\varphi'_+(0)=-\infty\).
Applying \eqref{eq:pointwise-bregman-lower} to $f,f_*$ and using
H\"older's inequality with
$k:=\beta+2$ and $k':=k/(k-1)\le2$ yields
\begin{align}
&\|h(\pi-\pi^*)\|_{\mathrm{TV}}^k
=\left(\int_{\cX\times\cY}h|f-f_*|\,d\mu\otimes\nu\right)^k
\notag\\
&\le
\left(\int_{\cX\times\cY}
\frac{|f-f_*|^k}{(1+f+f_*)^{k-1}}\,d\mu\otimes\nu\right)
\left(\int_{\cX\times\cY}h^{k'}(1+f+f_*)\,d\mu\otimes\nu\right)^{k-1}
\notag\\
&\le
(\beta+1)(\beta+2)((\lambda_0+\lambda_1)\vee\lambda_2)
B^{\mu,\nu}_\varphi(\pi\mid\pi^*)
\left(\int_{\cX\times\cY}h^{k'}\,d(\mu\otimes\nu+\pi+\pi^*)\right)^{k-1}
\notag\\
&\le
(\beta+1)(\beta+2)3^{\beta/2}((\lambda_0+\lambda_1)\vee\lambda_2)
\left(\int_{\cX\times\cY}h^2\,d(\mu\otimes\nu+\pi+\pi^*)\right)^{k/2}
B^{\mu,\nu}_\varphi(\pi\mid\pi^*).
\label{eq:bregman-tv}
\end{align}
The last inequality uses that $\mu\otimes\nu+\pi+\pi^*$ has total mass $3$:
\[
\left(\int_{\cX\times\cY}h^{k'}\,d(\mu\otimes\nu+\pi+\pi^*)\right)^{k-1}
\le
3^{\beta/2}
\left(\int_{\cX\times\cY}h^2\,d(\mu\otimes\nu+\pi+\pi^*)\right)^{k/2}.
\]
Combining \eqref{eq:bregman-tv} with
\eqref{eq:coercivity} proves \eqref{eq:objective-tv}.
\end{proof}

\begin{corollary}[Uniqueness under curvature domination]
\label{corollary:GW-uniqueness}
Under \Cref{assumption:regularizaion} and \Cref{assumption:Cgamma}, the minimizer of $\cF^{\mu,\nu}_\varepsilon(\cdot)$ over $\Pi(\mu,\nu)$ is unique.
\end{corollary}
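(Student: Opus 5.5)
The plan is to deduce uniqueness directly from \Cref{lemma:strong-convexity-tv}, taking the test function to be the constant $h\equiv1$. Existence of a minimizer is already guaranteed by \Cref{proposition:GW-attainment}. Fix the minimizer $\pi^*$ that appears in \Cref{assumption:Cgamma}, and let $\pi\in\Pi(\mu,\nu)$ be any other minimizer. Then $\cF_\varepsilon^{\mu,\nu}(\pi)=\cF_\varepsilon^{\mu,\nu}(\pi^*)=S_{r,s,\varepsilon}(\mu,\nu)$, and this common value is finite by \eqref{eq:gw-finite-value-bounds}. In particular $\cF_\varepsilon^{\mu,\nu}(\pi)<\infty$, so $\pi$ is one of the couplings $\theta$ to which \eqref{eq:Cgamma} applies.

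Next I would check the hypotheses of the lemma for $h\equiv1$. This $h$ is measurable and nonnegative, and $\int h^2\,d(\mu\otimes\nu+\pi^*+\pi)=3<\infty$. Applying \eqref{eq:objective-tv} then gives
\[
\|\pi-\pi^*\|_{\mathrm{TV}}^{\beta+2}\le \frac{(\beta+1)(\beta+2)3^{\beta/2}((\lambda_0+\lambda_1)\vee\lambda_2)}{\varepsilon-\kappa}\,3^{(\beta+2)/2}\bigl(\cF_\varepsilon^{\mu,\nu}(\pi)-\cF_\varepsilon^{\mu,\nu}(\pi^*)\bigr)=0,
\]
where $\varepsilon-\kappa>0$ by \Cref{assumption:Cgamma}. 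Hence $\pi=\pi^*$ as measures, which proves uniqueness.

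The step that needs care is a possible degeneracy in the constant. If $(\lambda_0+\lambda_1)\vee\lambda_2=0$, the right-hand side vanishes for every $\pi$, which would be a contradiction in general. In fact this case cannot occur: \eqref{eq:Aphi} would then force $1/\varphi''\le0$, which is impossible since $\varphi''>0$. So the constant is positive and finite, and no issue arises.

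An alternative route avoids the constant altogether. One uses the coercivity estimate \eqref{eq:coercivity} from the proof of the lemma to get $B^{\mu,\nu}_\varphi(\pi\mid\pi^*)=0$. Strict convexity of $\varphi$ makes $b_\varphi(a\mid b)>0$ whenever $a\ne b$, so the densities agree $\mu\otimes\nu$-a.e., and again $\pi=\pi^*$.
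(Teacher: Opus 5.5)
Your proposal is correct and matches the paper's proof: both apply \Cref{lemma:strong-convexity-tv} with $h\equiv1$ (so the weight has total mass $3$ under $\mu\otimes\nu+\pi^*+\pi$), which gives $\|\pi-\pi^*\|_{\mathrm{TV}}^{\beta+2}\lesssim \cF_\varepsilon^{\mu,\nu}(\pi)-\cF_\varepsilon^{\mu,\nu}(\pi^*)=0$ for any other minimizer $\pi$. Your remark that $(\lambda_0+\lambda_1)\vee\lambda_2>0$ and your alternative route through $B^{\mu,\nu}_\varphi(\pi\mid\pi^*)=0$ and strict convexity of $\varphi$ are both valid; the paper does not need either of them.
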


\begin{proof}
Apply \Cref{lemma:strong-convexity-tv} with
$\pi=\theta$ and weight $h\equiv1$. Since $\int_{\cX\times\cY}1\,d(\mu\otimes\nu+\theta+\pi^*)=3$
and
$\cF_\varepsilon^{\mu,\nu}(\pi^*)=
S_{r,s,\varepsilon}(\mu,\nu)$,
the lemma gives for every
$\theta\in\Pi(\mu,\nu)$,
\[
\|\theta-\pi^*\|_{\mathrm{TV}}^{\beta+2}
\le
\frac{(\beta+1)(\beta+2)3^{1+\beta}((\lambda_0+\lambda_1)\vee\lambda_2)}
{\varepsilon-\kappa}
\bigl(
\cF_\varepsilon^{\mu,\nu}(\theta)
-
S_{r,s,\varepsilon}(\mu,\nu)
\bigr).
\]
The proof completes.
\end{proof}

\begin{theorem}[Divergence-regularized GW stability]
\label{theorem:DGW-stability}
Let $r,s\in[1,\infty)$, $p\in[1,\infty]$, and $1\le q<\infty$ with $q\le p$.
Suppose that $\varphi$ satisfies
\Cref{assumption:regularizaion}, and let
$\pi^*$ and $\widetilde\pi^*$ be finite minimizers of
$S_{r,s,\varepsilon}(\mu,\nu)$ and
$S_{r,s,\varepsilon}(\tilde\mu,\tilde\nu)$, respectively.
Assume that  \Cref{assumption:Agamma} holds for the same $(r,s,p)$ and let $\Lambda>0$ be a constant satisfying \eqref{eq:Agamma} as provided by \Cref{proposition:Agamma}.
Assume further that \Cref{assumption:Cgamma} holdss at $\pi^*$ with $\kappa <\varepsilon$.
Assume $\mu\in\cP_{2q}(\cX)$ and $\nu\in\cP_{2q}(\cY)$
and 
let $\Delta = W_p((\mu,\nu),(\tilde\mu,\tilde\nu))$.
Then
\begin{align}\label{eq:main-bound}
W_q(\pi^*,\widetilde\pi^*)
\le
C_1\Delta
+
C_2 \Bigl(
\frac{\Lambda\Delta}{\varepsilon-\kappa}\Bigr)^{1/[q(\beta+2)]},
\end{align}
with $C_1,C_2$ depending on $\beta,\lambda_0,\lambda_1,\lambda_2,\varepsilon$ and $2q$-th moments of $\mu$ and $\nu$.
\begin{comment}
\begin{align}\label{eq:main-bound}
W_q(\pi^*,\widetilde\pi^*)
\le
2^{1/q-1/p}\Delta
+
2^{1-1/q}B_{\mu,\nu,q}^{1/(2q)}
\left(
\frac{2 (\beta+1)(\beta+2)3^{\beta/2} ((\lambda_0+\lambda_1)\vee\lambda_2)\Lambda\Delta}
{\varepsilon-\kappa}
\right)^{1/[q(\beta+2)]},
\end{align}
\end{comment}
\end{theorem}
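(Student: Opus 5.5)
The plan follows the shadow strategy of \cite{eckstein2022quantitative,BayraktarEcksteinZhang_stability}: we estimate how suboptimal the shadow is in the original problem, and then turn that suboptimality gap into a $W_q$ bound. Let $\widehat\pi\in\Pi(\mu,\nu)$ be a shadow of $\widetilde\pi^*$ back onto $\Pi(\mu,\nu)$, built from optimal plans for $W_p(\tilde\mu,\mu)$ and $W_p(\tilde\nu,\nu)$. The triangle inequality gives
\[
W_q(\pi^*,\widetilde\pi^*)\le W_q(\widetilde\pi^*,\widehat\pi)+W_q(\widehat\pi,\pi^*).
\]
By \Cref{proposition_shadow_property}, $W_p(\widetilde\pi^*,\widehat\pi)=\Delta$. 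Comparing the product metrics $d_{\cX\times\cY,q}\le 2^{1/q-1/p}d_{\cX\times\cY,p}$ and using $W_q\le W_p$ for $q\le p$, the first term is at most $C_1\Delta$ with $C_1=2^{1/q-1/p}$.

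Next we bound the gap $\cF^{\mu,\nu}_\varepsilon(\widehat\pi)-\cF^{\mu,\nu}_\varepsilon(\pi^*)$. \Cref{proposition_shadow_property} gives $\mathsf D_\varphi(\widehat\pi,\mu\otimes\nu)\le\mathsf D_\varphi(\widetilde\pi^*,\tilde\mu\otimes\tilde\nu)$, and \Cref{proposition:Agamma} gives $\cE_\Gamma(\widehat\pi)\le\cE_\Gamma(\widetilde\pi^*)+\Lambda\Delta$. Hence
\[
\cF^{\mu,\nu}_\varepsilon(\widehat\pi)\le S_{r,s,\varepsilon}(\tilde\mu,\tilde\nu)+\Lambda\Delta\le S_{r,s,\varepsilon}(\mu,\nu)+2\Lambda\Delta,
\]
where the last step is \Cref{thm:continuity_regualarized_GW_value}. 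The gap is therefore at most $2\Lambda\Delta$.

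The main step converts this gap into a $W_q$ bound through \Cref{lemma:strong-convexity-tv} with a weighted total variation. Since $\widehat\pi$ and $\pi^*$ are both in $\Pi(\mu,\nu)$, the standard estimate (e.g.\ Villani, Thm.~6.15) gives
\[
W_q^q(\widehat\pi,\pi^*)\le 2^{q-1}\bigl\|h(\widehat\pi-\pi^*)\bigr\|_{\mathrm{TV}},\qquad h(x,y):=d_{\cX\times\cY,q}((x,y),(x_0,y_0))^q .
\]
Then $h^2\lesssim d_\cX(x,x_0)^{2q}+d_\cY(y,y_0)^{2q}$, so $\int h^2\,d(\mu\otimes\nu+\pi^*+\widehat\pi)$ depends only on the marginals. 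It is bounded by a constant $B_{\mu,\nu,q}$ determined by the $2q$-th moments of $\mu,\nu$; this is exactly why the hypothesis $\mu\in\cP_{2q},\nu\in\cP_{2q}$ appears, and it also verifies the integrability requirement of the lemma. Applying \Cref{lemma:strong-convexity-tv} with $\pi=\widehat\pi$ (valid since \Cref{assumption:Cgamma} holds at $\pi^*$) gives
\[
\|h(\widehat\pi-\pi^*)\|_{\mathrm{TV}}\le B_{\mu,\nu,q}^{1/2}\Bigl(\tfrac{(\beta+1)(\beta+2)3^{\beta/2}((\lambda_0+\lambda_1)\vee\lambda_2)\,2\Lambda\Delta}{\varepsilon-\kappa}\Bigr)^{1/(\beta+2)} .
\]
Taking $q$-th roots yields the second term, with $C_2=2^{1-1/q}B_{\mu,\nu,q}^{1/(2q)}(\cdots)^{1/[q(\beta+2)]}$.

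I expect the main obstacle to be this last step. One must justify the weighted total variation bound on $W_q$ for two couplings with unbounded supports. One must also check that the weight $h$ is controlled uniformly over couplings by marginal moments alone, so that the constant does not depend on the unknown $\widehat\pi$. A further check is that $\cF^{\mu,\nu}_\varepsilon(\widehat\pi)<\infty$, so the lemma applies nontrivially; this follows from finiteness of $\widetilde\pi^*$ and the shadow inequalities. Tracking the metric-comparison constants between $d_{\cX\times\cY,q}$ and $d_{\cX\times\cY,p}$ is routine.
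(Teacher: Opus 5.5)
Your proposal is correct and follows the paper's proof essentially step for step. The paper also takes the $p$-shadow of $\widetilde\pi^*$ in $\Pi(\mu,\nu)$ and bounds its suboptimality gap by $2\Lambda\Delta$ via the same shadow/value-continuity argument; it then applies \Cref{lemma:strong-convexity-tv} with the weight $w_q=d_{\cX}(x,x_0)^q+d_{\cY}(y,y_0)^q$ (identical to your $h$), together with the weighted total-variation bound on $W_q$, and closes with the triangle inequality and $W_q\le 2^{1/q-1/p}W_p=2^{1/q-1/p}\Delta$, arriving at the same constants $C_1,C_2$.
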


\begin{proof}
Let $\pi\in\Pi(\mu,\nu)$ be the $p$-shadow of $\widetilde\pi^*$. \Cref{proposition_shadow_property} and \Cref{proposition:Agamma} indicate that 
\begin{align*}
\cF_\varepsilon^{\mu,\nu} (\pi) -  \cF_\varepsilon^{{\tilde\mu,\tilde\nu}} (\widetilde\pi^*)
=
\cE_\Gamma(\pi) - \cE_\Gamma(\widetilde\pi^*) + \varepsilon ( \mathsf{D}_\varphi (\pi, \mu\otimes\nu) - \mathsf{D}_\varphi (\widetilde\pi^*, \tilde\mu\otimes\tilde\nu) ) \\
\leq
\cE_\Gamma(\pi) - \cE_\Gamma(\widetilde\pi^*)
\leq
\Lambda W_p(\pi, \widetilde\pi^*) = \Lambda \Delta.
\end{align*}
Since $\pi^*$ minimizes $\cF_\varepsilon^{\mu,\nu}(\cdot)$ over $\Pi(\mu,\nu)$,
\begin{align}\label{eq:first-shadow-objective}
    \cF_\varepsilon^{\mu,\nu}(\pi^*) 
    \leq 
    \cF_\varepsilon^{\mu,\nu}(\pi) 
    \leq
    \cF_\varepsilon^{{\tilde\mu,\tilde\nu}}(\widetilde\pi^*) + \Lambda \Delta.
\end{align}
Similarly,
 \begin{align}\label{eq:reverse-shadow-objective}
     \cF_\varepsilon^{{\tilde\mu,\tilde\nu}}(\widetilde\pi^*) 
     \leq
     \cF_\varepsilon^{\mu,\nu}(\pi^*)
     +
     \Lambda \Delta.
 \end{align}
Combining \eqref{eq:first-shadow-objective} and
\eqref{eq:reverse-shadow-objective} yields the value-gap estimate 
\begin{equation}\label{eq:shadow-objective-gap}
0\leq
\cF_\varepsilon^{\mu,\nu}(\pi)-\cF_\varepsilon^{\mu,\nu}(\pi^*)
=
\bigl(
\cF_\varepsilon^{\mu,\nu}(\pi) - \cF_\varepsilon^{\tilde\mu,\tilde\nu}(\tilde\pi^*)
\bigr)
+
\bigl(
\cF_\varepsilon^{\tilde\mu,\tilde\nu}(\tilde\pi^*)
-
\cF_\varepsilon^{\mu,\nu}(\pi^*)
\bigr)
\le2\Lambda\Delta.
\end{equation}
Since  $\mu\in\cP_{2q}(\cX)$ and $\nu\in\cP_{2q}(\cY)$, Cauchy-Schwarz inequality gives that %because $(\sum_i a_i)^2\le m\sum_i a_i^2$.
\begin{align*}
B_{\mu,\nu,q}
&:=
\sup_{\eta_1,\eta_2\in\Pi(\mu,\nu)}
\int_{\cX\times\cY}(d_{\cX}(x,x_{0})^q + d_{\cY}(y,y_{0})^q)^2\,d(\mu\otimes\nu+\eta_1+\eta_2)(x,y)\\
&\le
6
\int_{\cX}d_{\cX}(x,x_{0})^{2q}\,d\mu(x)
+
6
\int_{\cY}d_{\cY}(y,y_{0})^{2q}\,d\nu(y)
<\infty,
\end{align*}
for any fixed $(x_{0},y_{0})\in\cX\times\cY$. Apply \eqref{eq:objective-tv} with $h=w_q:=
d_{\cX}(x,x_{0})^q + d_{\cY}(y,y_{0})^q$. Since
$\pi^*,\pi\in\Pi(\mu,\nu)$, \eqref{eq:shadow-objective-gap} gives
\begin{equation}\label{eq:weighted-tv-shadow}
\|w_q(\pi^*-\pi)\|_{\mathrm{TV}}^{\beta+2}
\le
\frac{2(\beta+1)(\beta+2)3^{\beta/2}((\lambda_0+\lambda_1)\vee\lambda_2)
B_{\mu,\nu,q}^{(\beta+2)/2}\Lambda\Delta}
{\varepsilon-\kappa}.
\end{equation}
The weighted variation estimate
\cite[Proposition~7.10]{villani2021topics} states that
\begin{equation}\label{eq:Wq-tv}
W_q(\pi^*,\pi)^q
\le
2^{q-1}\|w_q(\pi^*-\pi)\|_{\mathrm{TV}}.
\end{equation}
Combining \eqref{eq:weighted-tv-shadow} and \eqref{eq:Wq-tv}, we obtain 
\begin{equation}\label{eq:Wq-first}
W_q(\pi^*,\pi)
\le
2^{1-1/q}B_{\mu,\nu,q}^{1/(2q)}
\left(
\frac{2(\beta+1)(\beta+2)3^{\beta/2}((\lambda_0+\lambda_1)\vee\lambda_2)\Lambda\Delta}
{\varepsilon-\kappa}
\right)^{1/[q(\beta+2)]}.
\end{equation}
For every $\gamma\in\Pi(\pi,\widetilde\pi^*)$, using H\"{o}lder inequality twice gives
\begin{align*}
\left(\int d_{\cX\times\cY,q}(z,\tilde z )^q\,d\gamma(z,\tilde z)\right)^{1/q}
&\le
2^{1/q-1/p}
\left(\int d_{\cX\times\cY,p}(z,\tilde z)^q\,d\gamma(z,\tilde z)\right)^{1/q}\\
&\le
2^{1/q-1/p}
\left(\int d_{\cX\times\cY,p}(z,\tilde z)^p\,d\gamma(z,\tilde z)\right)^{1/p}
\end{align*}
when $p<\infty$, with the essential-supremum interpretation when
$p=\infty$. Taking infima over $\gamma$ and using
\Cref{proposition_shadow_property} yields
\begin{equation}\label{eq:Wq-second}
W_q(\pi,\widetilde\pi^*)
\le
2^{1/q-1/p}W_p(\pi,\widetilde\pi^*)
=2^{1/q-1/p}\Delta.
\end{equation}
The triangle inequality, together with \eqref{eq:Wq-first} and
\eqref{eq:Wq-second}, proves \eqref{eq:main-bound}.
\end{proof}

\section{Sample complexity of regularized Gromov-Wasserstein distance}\label{sec:sample_complexity_regularized _Gromov_Wasserstein_distance}

Throughout this section, we consider regularized Euclidean $(2,2)$-GW functional $S_{2,2,\varepsilon}(\mu,\nu)$
with any fixed regularization parameter $\varepsilon>0$
and assume
probability measures
$\mu\in\mathcal P(\bR^{d_x})$ and
$\nu\in\mathcal P(\bR^{d_y})$ with compact supports $ \mathcal X\subset\overline B(0,R_x)$, $ \mathcal Y\subset\overline B(0,R_y)$, respectively, where $R_x,R_y\ge1$. Observe that $S_{2,2,\varepsilon}$ is invariant to isometric operations on the marginal spaces, such as translation and orthonormal rotation. 
Thus, we assume without loss of generality that $\mu$ and $\nu$ are centered, i.e.,
$\int x\,d\mu(x)=\int y\,d\nu(y)=0$.

We next derive a dual formulation for the regularized GW distance. 
The representation \eqref{eq:gw-variational-duality} reduces
divergence-regularized GW to a family of regularized OT problems. In particular, an outer minimizer $A^*$ and its dual-optimal
potentials determine a regularized GW-optimal coupling through \eqref{eq:optimal-matrix-moment}.
This duality connects the regularized GW distance to the recently well-understood theory of ROT \cite{AlbertoGonzalez-SanzEcksteinStephanNutzMarcel2025Lpweaklimit,pengtao_alberto_xiaohui_2026centrallimittheoremsbootstrap,alberto.gonzalez-marcel.nutz_austin.stromme2026}, and serves as a key ingredient in our sample complexity analysis of the empirical regularized GW distance.

For compactly supported probability measures $\lambda\in\cP(\bR^{d_x})$ and $\eta\in\cP(\bR^{d_y})$, write
$m_2(\lambda):=\int\|x\|^2d\lambda(x)$,
$m_2(\eta):=\int\|y\|^2\,d\eta(y)$,
$M_{\lambda,\eta}:=\sqrt{m_2(\lambda)m_2(\eta)}$, 
$\mathcal D_M:=\{A\in\bR^{d_x\times d_y}:\|A\|_F\le M/2\}$ for $M\ge0$ and put $\mathcal D:=\mathcal D_{R_xR_y} =\{A\in\bR^{d_x\times d_y}:\|A \|_F\le R_xR_y/2\}$.
Also we define 
\begin{align}\label{eqn:def_GW1}
\mathsf{GW}^1(\lambda,\eta)
:=\iint\|x-x'\|^4\,d\lambda(x)d\lambda(x')
      +\iint\|y-y'\|^4\,d\eta(y)d\eta(y')-4m_2(\lambda)m_2(\eta).
\end{align}
For $A\in\bR^{d_x\times d_y}$, set $ c_A(x,y):=-4\|x\|^2\|y\|^2-32x^\top Ay$.

\begin{theorem}[Regularized GW duality]
\label{thm:gw-duality}
Let $\mu\in\cP(\bR^{d_x})$ and $\nu\in \cP(\bR^{d_y})$ be compactly supported. For every $M\ge M_{\mu,\nu}$,
\begin{align}\label{eq:gw-variational-duality}
&\quad S_{2,2,\varepsilon}(\mu,\nu) -\mathsf{GW}^1(\mu,\nu)\\
=&
\min_{A\in\bR^{d_x\times d_y}}
\left\{32\|A\|_F^2+\OT^{c_A}(\mu,\nu)\right\}
=
\min_{A\in\mathcal D_M}
\left\{32\|A\|_F^2+\OT^{c_A}(\mu,\nu)\right\}.\nonumber
\end{align} Equivalently,
\begin{equation}
\begin{split}
S_{2,2,\varepsilon}(\mu,\nu)-\mathsf{GW}^1(\mu,\nu)
=
&\min_{A\in\mathcal D_M}
\max_{\substack{f\in \cC(\cX)\\g\in \cC(\cY)}}
\biggl\{
32\|A\|_F^2+\int f\, d\mu+\int g\,d\nu \\
&-\varepsilon\int
\psi\!\left(
\frac{f(x)+g(y)-c_A(x,y)}{\varepsilon}
\right)\,d(\mu\otimes\nu)(x,y)
\biggr\}.
\end{split}
\label{eq:gw-potential-duality}
\end{equation}
For each \(A\in\mathbb{R}^{d_x\times d_y}\), the inner ROT problem admits a unique optimizer \(\pi_A\).
The GW minimizers and the outer minimizers in either matrix domain,
correspond bijectively through
\begin{equation}
\pi^*=\pi_{A^*},
\qquad
A^*=\frac12\int xy^\top\,d\pi^*(x,y).
\label{eq:optimal-matrix-moment}
\end{equation}
\end{theorem}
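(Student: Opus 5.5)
The plan is to follow the Zhang--Goldfeld--Kato decomposition of the $(2,2)$-distortion and then linearize the single remaining non-linear term with a quadratic variational identity.

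\textbf{Step 1: decompose the distortion.} For centered $\mu,\nu$ and any $\pi\in\Pi(\mu,\nu)$, expand
\[
\bigl(\|x-x'\|^2-\|y-y'\|^2\bigr)^2=\|x-x'\|^4+\|y-y'\|^4-2\|x-x'\|^2\|y-y'\|^2 .
\]
The first two terms integrate against $\pi\otimes\pi$ to marginal quantities. For the cross term, write $\|x-x'\|^2=\|x\|^2+\|x'\|^2-2x^\top x'$, and similarly in $y$, and multiply out. Centering kills every term that is linear in $x'$ or $y'$ alone. What remains is
\[
\cE_\Gamma(\pi)=\mathsf{GW}^1(\mu,\nu)-4\int\|x\|^2\|y\|^2\,d\pi-8\Bigl\|\int xy^\top d\pi\Bigr\|_F^2 .
\]
The $-4m_2m_2$ term in $\mathsf{GW}^1$ absorbs the products $\int\|x\|^2d\mu\int\|y\|^2d\nu$. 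I will check the coefficients carefully, since the paper's normalization ($32\|A\|_F^2$ and $-32x^\top Ay$) must come out exactly.

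\textbf{Step 2: linearize the quadratic term.} Use the identity
\[
-8\|B\|_F^2=\min_{A}\bigl\{32\|A\|_F^2-32\langle A,B\rangle_F\bigr\},
\]
whose unique minimizer is $A=B/2$. This gives
\[
\cF_\varepsilon^{\mu,\nu}(\pi)-\mathsf{GW}^1=\min_A\Bigl\{32\|A\|_F^2+\int c_A\,d\pi+\varepsilon\,\mathsf D_\varphi(\pi,\mu\otimes\nu)\Bigr\}.
\]
Take the infimum over $\pi$ and swap the two infima. The inner infimum over $\pi$ is $\OT^{c_A}(\mu,\nu)$, which yields the first equality, provided the outer minimum is attained.

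\textbf{Step 3: attainment and restriction to $\mathcal D_M$.} By Proposition~\ref{proposition:GW-attainment}, a regularized GW minimizer $\pi^*$ exists. Set $A^*=\tfrac12\int xy^\top d\pi^*$. Then
\[
32\|A^*\|_F^2+\OT^{c_{A^*}}\le\cF_\varepsilon(\pi^*)-\mathsf{GW}^1=S-\mathsf{GW}^1\le 32\|A\|_F^2+\OT^{c_A}\quad\text{for all }A,
\]
so the minimum over $A$ is attained at $A^*$. Cauchy--Schwarz gives $\|\int xy^\top d\pi\|_F\le\sqrt{m_2(\mu)m_2(\nu)}$, hence $\|A^*\|_F\le M_{\mu,\nu}/2\le M/2$. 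So every outer minimizer produced from a coupling lies in $\mathcal D_M$, and the restricted minimum over $\mathcal D_M$ equals the unrestricted one.

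\textbf{Step 4: potential form and uniqueness of $\pi_A$.} The potential formulation \eqref{eq:gw-potential-duality} follows by inserting ROT strong duality, quoted in Section~2 for compact supports and the continuous cost $c_A$. Uniqueness of $\pi_A$ follows from strict convexity of $\varphi$, as in the ROT discussion.

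\textbf{Step 5: the bijection.} This is the part needing care. First, let $\pi^*$ be a GW minimizer. Step 3 shows $A^*$ is an outer minimizer and that the chain is tight. Tightness forces $\pi^*$ to attain $\OT^{c_{A^*}}$, and uniqueness gives $\pi^*=\pi_{A^*}$.

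Conversely, let $A$ be an outer minimizer and $B=\int xy^\top d\pi_A$. Then
\[
\cF(\pi_A)-\mathsf{GW}^1\le 32\|A\|_F^2+\OT^{c_A}=S-\mathsf{GW}^1,
\]
because the left side is the minimum over $A'$ of $32\|A'\|_F^2+\int c_{A'}\,d\pi_A+\varepsilon\mathsf D_\varphi$. Hence $\pi_A$ is a GW minimizer and equality holds throughout. Equality forces $A$ to minimize $32\|A'\|_F^2-32\langle A',B\rangle_F$, which is strictly convex in $A'$, so $A=B/2$. The two maps are therefore mutually inverse.

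The main obstacle I expect is bookkeeping in Step 1 (the exact constants) rather than anything conceptual.
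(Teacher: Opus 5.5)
Your proposal is correct and follows essentially the same route as the paper. You use the centered expansion $\cE_\Gamma(\pi)=\mathsf{GW}^1-4\int\|x\|^2\|y\|^2\,d\pi-8\|B_\pi\|_F^2$, complete the square in $A$ with minimizer $B_\pi/2\in\mathcal D_M$ (by Cauchy--Schwarz), swap the two infima, and prove the bijection in both directions via uniqueness of the ROT optimizer and strict convexity in $A$; you also obtain attainment of the outer minimum from a GW minimizer, which is how the paper concludes that both outer infima are attained.
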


\begin{proof}[Proof of \Cref{thm:gw-duality}]
The optimums related to regularized GW are attained because of the lower semicontinuity of $\pi$ and $A$.
For each fixed \(A\), the ROT problem has a unique primal optimizer \(\pi_A\), and its dual maximum is attained \cite[Proposition~2.3]{alberto.gonzalez-marcel.nutz_austin.stromme2026}.
The minimum in \eqref{eq:gw-variational-duality} is attained due to continuity of the objective with respect to $A$ on the compact set $\cD$.
For $\pi\in\Pi(\mu,\nu)$, write $a_\mu:=\int x\,d\mu(x), b_\nu:=\int y\,d\nu(y), B_\pi:=\int xy^\top\,d\pi(x,y)$.
Then $\langle A,B_\pi\rangle_F
 =\int\langle A,xy^\top\rangle_F\,d\pi
 =\int x^\top Ay\,d\pi$.
Expanding squared distances, noting that $\iint\langle x,x'\rangle\langle y,y'\rangle\,d\pi\,d\pi = \|B_\pi\|^2_F$ and then integrating gives
\begin{equation}
\mathcal{E}_{\Gamma}(\pi)
=\mathsf{GW}^{1}(\mu,\nu)
-4\int\|x\|^2\|y\|^2\,d\pi
-8\|B_\pi\|_F^2
+8\int\bigl(
a_\mu^\top x\|y\|^2
+b_\nu^\top y\|x\|^2
\bigr)\,d\pi.
\label{eq:distortionexpansion}
\end{equation}
\begin{comment}
\begin{align}
\mathcal E_\Gamma(\pi)
%&=\iint\|x-x'\|^4\,d\mu(x)d\mu(x')+\iint\|y-y'\|^4\,d\nu(y)d\nu(y')-2\iint\|x-x'\|^2\|y-y'\|^2\,d\pi(x,y)d\pi(x',y') \nonumber\\
%& =\mathsf{GW}^1(\mu,\nu)-4\int\|x\|^2\|y\|^2\,d\pi -8\iint\langle x,x'\rangle\langle y,y'\rangle\,d\pi\,d\pi+8a_\mu^\top\int x\|y\|^2\,d\pi+8b_\nu^\top\int y\|x\|^2\,d\pi \nonumber\\
&= 
\mathsf{GW}^1(\mu,\nu)-4\int\|x\|^2\|y\|^2\,d\pi
        -8\|B_\pi\|_F^2
    +8a_\mu^\top\int x\|y\|^2\,d\pi
     +8b_\nu^\top\int y\|x\|^2\,d\pi.
\label{eq:distortionexpansion}
\end{align}
\end{comment}
The last term vanishes when the marginals are centered, as assumed.
By the triangle and Cauchy--Schwarz inequalities,
\[
 \|B_\pi\|_F
 \le\int\|xy^\top\|_F\,d\pi
 =\int\|x\|\|y\|\,d\pi
 \le\sqrt{m_2(\mu)m_2(\nu)}=M_{\mu,\nu}.
\]
Consequently, completing the square yields
\begin{align}\label{eq:gw-completed-square}
32\|A\|_F^2+\int c_A\,d\pi
&+\varepsilon\,\mathsf{D}_\varphi(\pi,\mu\otimes\nu)
+ \mathsf{GW}^1(\mu,\nu) - S_{2,2,\varepsilon}(\mu,\nu)\nonumber\\
=&
\cF_\varepsilon^{\mu,\nu}(\pi)
-S_{2,2,\varepsilon}(\mu,\nu)
+32\Bigl\|A-\frac12B_\pi\Bigr\|_F^2.
\end{align}
Since $B_\pi/2\in\mathcal D_M$, taking infima over $\pi$ and $A$
in either order proves \eqref{eq:gw-variational-duality}.

If $\pi^*$ is regularized GW-optimal, then
\eqref{eq:gw-completed-square} shows that
$\pi^*$ is the optimizer of the regularized OT problem with cost $c_{A^*}$ for $A^*=B_{\pi^*}/2$.
Hence $A^*$ minimizes the outer objective in
either matrix domain, and $\pi^*$ solves the fixed-$A^*$ OT
problem. Thus both outer infima are attained and  uniqueness of regularized OT optimizer gives $\pi^*=\pi_{A^*}$.
Conversely, for any outer minimizer $A^*$, the same identity gives
\[
0=
\cF_\varepsilon^{\mu,\nu}(\pi_{A^*})
-S_{2,2,\varepsilon}(\mu,\nu)
+
32\Bigl\|A^*-\frac12B_{\pi_{A^*}}\Bigr\|_F^2.
\]
Both terms on the right are nonnegative. Hence $\pi_{A^*}$ is regularized GW-optimal and $A^*=B_{\pi_{A^*}}/2$, proving
\eqref{eq:optimal-matrix-moment} and the asserted bijection.

For each fixed $A$, bounded-cost transport duality
\cite[Proposition~2.3]{alberto.gonzalez-marcel.nutz_austin.stromme2026} gives dual formulation of ROT problem 
and then substituting this attained fixed-$A$ dual representation into \eqref{eq:gw-variational-duality} proves \eqref{eq:gw-potential-duality}.
\end{proof}

To analyze the empirical regularized GW value through variational representation \eqref{eq:gw-variational-duality}, we need uniform estimates over $A\in\mathcal D$.
We begin with bounds for the cost family $\{c_A:A\in\mathcal D\}$, which will also provide uniform bounds and regularity estimates for the associated optimal dual potentials.
\begin{lemma}
\label{lem:gw-cost-bounds}
Under the assumption of 
Theorem~\ref{thm:gw-duality}. We have
\begin{align}
\sup_{A\in D}\|c_A\|_\infty\le 20R_x^2R_y^2,
\quad
\|c_A-c_{A'}\|_\infty\le32R_xR_y\|A-A'\|_F, \,\, A,A'\in \mathcal D.\nonumber\\
 |c_A(x,y)-c_A(x',y)|\le24R_xR_y^2\|x-x'\|,
 \quad
 |c_A(x,y)-c_A(x,y')|\le24R_x^2R_y\|y-y'\|.
 \label{eq:gw-spatial-cost-bounds}
\end{align}
\end{lemma}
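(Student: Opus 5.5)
The plan is to bound each term of $c_A(x,y)=-4\|x\|^2\|y\|^2-32x^\top Ay$ directly, using only $\|x\|\le R_x$, $\|y\|\le R_y$ on $\cX\times\cY$ and $\|A\|_F\le R_xR_y/2$ for $A\in\mathcal D$. The basic tool is the estimate $|x^\top Ay|=|\langle A,xy^\top\rangle_F|\le\|A\|_F\|x\|\|y\|$, which follows from Cauchy--Schwarz in the Frobenius inner product together with $\|xy^\top\|_F=\|x\|\|y\|$.

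For the sup bound, we have $4\|x\|^2\|y\|^2\le4R_x^2R_y^2$ and $32|x^\top Ay|\le32\cdot\frac{R_xR_y}{2}R_xR_y=16R_x^2R_y^2$. Together these give $\|c_A\|_\infty\le20R_x^2R_y^2$. For the Lipschitz dependence on $A$, the quartic term cancels, so
\[
c_A-c_{A'}=-32x^\top(A-A')y,
\qquad
|c_A-c_{A'}|\le32\|A-A'\|_F\,R_xR_y .
\]

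For the spatial bounds, first fix $y$. The quartic part contributes $4\|y\|^2\bigl|\|x\|^2-\|x'\|^2\bigr|=4\|y\|^2\bigl|\|x\|-\|x'\|\bigr|(\|x\|+\|x'\|)\le8R_y^2R_x\|x-x'\|$. The bilinear part contributes $32|(x-x')^\top Ay|\le32\|x-x'\|\cdot\frac{R_xR_y}{2}R_y=16R_xR_y^2\|x-x'\|$. The total is $24R_xR_y^2\|x-x'\|$, and the bound in $y$ follows symmetrically with the roles of $R_x,R_y$ exchanged.

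There is no real obstacle here; the only point needing care is that the spatial bounds hold for $x,x'\in\cX$ (so that $\|x\|+\|x'\|\le2R_x$), which is the setting in which the lemma is used.
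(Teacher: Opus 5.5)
Your proof is correct and follows the paper's argument essentially line by line: bound the quartic and bilinear terms separately using $\|x\|\le R_x$, $\|y\|\le R_y$ and $\|A\|_F\le R_xR_y/2$; the quartic term cancels in $c_A-c_{A'}$; and $\bigl|\|x\|^2-\|x'\|^2\bigr|\le 2R_x\|x-x'\|$ gives the spatial constant $8+16=24$. The only cosmetic difference is that you use Cauchy--Schwarz in the Frobenius inner product where the paper uses $\|A\|_{\mathrm{op}}\le\|A\|_F$, and both give the same bounds.
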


The preceding estimates, together with the properties of ROT dual potentials recalled in \Cref{sec: preliminaries}, allow us to collect the population optimal dual integrands into a uniformly bounded class. We now define this class. Write $B=100R_x^2R_y^2+\varepsilon$ and set
\[
 I=
 \left[-\frac{2B+20R_x^2R_y^2}{\varepsilon},
\frac{2B+20R_x^2R_y^2}{\varepsilon}\right].
\]
There exists a H\"older constant $L\ge1$ such that
\[
 |\psi'(s)-\psi'(t)|
 \le L|s-t|^{\frac{1}{1+\beta}},
 \qquad s,t\in I,
\]
as provided by \eqref{eq:holder-psi}.
In view of \Cref{thm:gw-duality}, we consider ROT problem with cost $c_A$ for each $A\in\mathcal D$. 
We normalize the associated optimal dual potentials in \eqref{eqn:schrodinger_system} by requiring $\max_{\mathcal X}f+\min_{\mathcal X}f=0$ and define
$$
\cS = \{ (A,f,g); A\in \mathcal D, (f,g)\in\mathcal C(\mathcal X)\times\mathcal C(\mathcal Y)\,\,\text{solve}\,\,\eqref{eqn:schrodinger_system}\,\text{with cost}\,c_A,
\max_{\mathcal X}f+\min_{\mathcal X}f=0
\}.$$
Define
\begin{equation*}
 H_{A,f,g}
 :=
 f\oplus g
 -\varepsilon\,\psi\,
 \Bigl(
\frac{f\oplus g-c_A}{\varepsilon}
\Bigr),
\qquad
\cH:=\{H_{A,f,g}:(A,f,g)\in\cS\}.
\end{equation*}
Thus, $\cH$ is uniformly bounded
since every $(A,f,g)\in\cS$ satisfies
$\|f\oplus g\|_\infty\le B$,
$\|f\|_\infty\le B$ and $\|g\|_\infty\le B$ due to the normalization $\max_{\mathcal X}f+\min_{\mathcal X}f=0$,
and the potentials inherit the respective coordinatewise Lipschitz constants in Lemma 
\ref{lem:gw-cost-bounds}.
By Lemma~\ref{lemma:conjugate-regularity}, there is a constant
$\kappa=\kappa_{I,1/2}>0$ such that
\eqref{eq:active-curvature} holds on $I$ with cutoff $1/2$. In what follows, $L$ and $\kappa$ denote these fixed constants.

To use $\mathcal H$ in the empirical-process argument, we next quantify its complexity in the uniform norm. The following lemma derives a covering bound from a stability estimate in which the cost matrix and finitely many cell averages of one dual potential control the associated dual integrand. This yields a finite-dimensional H\"older parametrization of $\mathcal H$, providing a key ingredient in the proof of \Cref{thm:parametric_sample_complexity}.
Related stability results for quadratically regularized OT are
studied in \cite{gonzalez2026stability} under stronger assumptions
and by different methods.

\begin{lemma}[Covering number bound for $\cH$]
\label{lem:covering}
Then there exist an integer $J$ satisfying
\begin{equation}
 1\le J
 \le
 \left(
 1+\frac{192R_x^2R_y^2(4L)^{(1+\beta)}}{\varepsilon}
 \right)^{d_x}
 \label{eq:covering-J-bound}
\end{equation}
and a constant $C\ge1$ depending on $R_x,d_x,R_y,d_y,\varphi,\varepsilon$ such that 
\begin{equation}
 \log N(\eta,\cH,\|\cdot\|_\infty)
 \le
2(1+\beta)(d_xd_y+J)
\log(1+ C/\eta),
 \qquad \eta>0.
 \label{eq:entropy-bound}
\end{equation}
\end{lemma}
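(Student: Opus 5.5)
The plan is to exhibit $\cH$ as the image of a bounded subset of $\bR^{d_xd_y+J}$ under a map that is H\"older of order $1/(1+\beta)$ in $\|\cdot\|_\infty$, and then cover the parameter set by a grid.

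\textbf{Step 1: partition and parameter.} Fix a partition $\cX=\bigcup_{j=1}^J Q_j$ into Borel cells of Euclidean diameter at most $\delta$, with $\delta$ proportional to $\varepsilon/(R_x^2R_y^2(4L)^{1+\beta})$. Such a partition comes from a grid of $\overline B(0,R_x)$; the choice of $\delta$ is tuned so that $J$ obeys \eqref{eq:covering-J-bound}. Drop empty and $\mu$-null cells. To $(A,f,g)\in\cS$ associate the parameter
\[
\theta(A,f,g):=\bigl(A,(m_j)_{j\le J}\bigr),\qquad m_j:=\mu(Q_j)^{-1}\int_{Q_j}f\,d\mu .
\]
The parameter lies in $\mathcal D\times[-B,B]^J$, using the normalization bound $\|f\|_\infty\le B$.

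\textbf{Step 2: reduce covering $\cH$ to stability of the potentials.} The arguments $(f\oplus g-c_A)/\varepsilon$ lie in $I$, where $\psi'$ is bounded. So $H_{A,f,g}$ is Lipschitz in $(f\oplus g,c_A)$ in sup norm, and by \Cref{lem:gw-cost-bounds} also in $A$. It then suffices to prove the stability estimate
\begin{equation*}
\|f-f'\|_\infty+\|g-g'\|_\infty\le C\bigl(\|A-A'\|_F+\max_j|m_j-m_j'|\bigr)^{1/(1+\beta)}
\end{equation*}
for all pairs $(A,f,g),(A',f',g')\in\cS$ whose parameters are within distance $1$.

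Given this estimate, the covering bound is routine. A grid of mesh $(\eta/C)^{1+\beta}$ on $\mathcal D\times[-B,B]^J$ has at most $(1+C/\eta)^{(1+\beta)(d_xd_y+J)}$ points. For each grid point with nonempty preimage, pick one element of $\cH$; these form an $\eta$-net. The factor $2$ in \eqref{eq:entropy-bound} absorbs the constants.

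\textbf{Step 3: proving the stability estimate (the main obstacle).} I would combine three ingredients.

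\emph{(a) $L^2$ control on the active set.} Write $u=(f\oplus g-c_A)/\varepsilon$ and define $u'$ analogously. Integrate $(\psi'(u)-\psi'(u'))(u-u')$ against $\mu\otimes\nu$. By \eqref{eqn:schrodinger_system} for both triples, every term involving $f-f'$ or $g-g'$ integrates to zero, leaving
\[
\int\bigl(B_\psi(u',u)+B_\psi(u,u')\bigr)\,d\mu\otimes\nu=\varepsilon^{-1}\int(\psi'(u)-\psi'(u'))(c_{A'}-c_A)\,d\mu\otimes\nu\le C\|A-A'\|_F .
\]
Then \eqref{eq:active-curvature} with cutoff $1/2$ gives
\[
\int_{\{\psi'(u)\ge 1/2\}}|u-u'|^2\,d\mu\otimes\nu\le C\|A-A'\|_F .
\]
Since $\int\psi'(u(x,\cdot))\,d\nu=1$ and $\psi'\le\psi'(\sup I)$, each $x$-section of the active set has $\nu$-mass bounded below.

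\emph{(b) From $f$ to $g$, and from cell averages to pointwise control.} Monotonicity of the second equation in \eqref{eqn:schrodinger_system} in $g(y)$ gives
\[
\|g-g'\|_\infty\le\|f-f'\|_\infty+\|c_A-c_{A'}\|_\infty ,
\]
and symmetrically for $f$. It therefore remains to control $\sup|f-f'|$. On each cell, $f-f'$ oscillates by at most $48R_xR_y^2\delta$ by the coordinatewise Lipschitz bounds, so it is within that amount of $m_j-m_j'$.

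\emph{(c) Absorbing the oscillation.} The oscillation term must not remain as an additive error, so a cruder bound is not enough. I would first try a contraction argument. For fixed $x$, the first equation says that a $\psi'$-weighted average over $y$ of $\psi'(u)-\psi'(u')$ vanishes. Expanding via the H\"older bound \eqref{eq:holder-psi} on the active set, this expresses $(f-f')(x)$ through a weighted average of $-(g-g')$ plus an $A$-term. Averaging that identity over a cell and comparing with the cell mean difference $m_j-m_j'$ should show that the oscillation enters multiplied by a factor like $L(4\cdot 48R_xR_y^2\delta/\varepsilon)^{1/(1+\beta)}\le 1/2$. This is exactly what the choice of $\delta$ in \eqref{eq:covering-J-bound} should secure, and it lets the oscillation be absorbed into the left-hand side. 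The H\"older step is also where the exponent $1/(1+\beta)$ enters.

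If this bookkeeping fails, my fallback is to combine (a) with the Lipschitz regularity of $f-f'$. Positive-mass active sections upgrade the $L^2$ bound to a sup bound on $(f-f')\oplus(g-g')$ up to an additive constant, and the cell averages then pin down that constant.
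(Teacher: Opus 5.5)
Your parametrization by $A$ and the $\mu$-cell averages of $f$ is the right one. Step 3(a) is correct: integrating $(\psi'(u)-\psi'(u'))(u-u')=B_\psi(u',u)+B_\psi(u,u')$ and cancelling the potential terms with \eqref{eqn:schrodinger_system} is a clean alternative to the paper's dual-gap bound. The sup-norm monotonicity bound in (b) is also correct. But (c), which is the entire difficulty, is not established, and neither mechanism you propose can establish it. Write $\rho:=\psi'(u)$. The factor $L(C\delta/\varepsilon)^{1/(1+\beta)}$ from \eqref{eq:holder-psi} bounds $\sup_y|\rho(x,y)-\rho(\bar x,y)|$ for $x,\bar x$ in one cell, i.e.\ how much the \emph{density} moves across a cell. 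This does not scale with $f-f'$. Inserting it into a comparison of the first-order conditions at $x$ and $\bar x$ therefore gives another additive error of fixed size, not a factor in front of $\|f-f'\|_\infty$. A true contraction would need the weights of your ``weighted average'', the difference quotients $(\psi'(u)-\psi'(u'))/(u-u')$, to change little in total variation across a cell. That amounts to a uniform modulus of continuity for $\psi''$, which fails in the intended cases: $\psi'(s)=s_+$ has a kink for quadratic regularization, and $\psi''$ blows up at the edge of the support for Tsallis with $\alpha>2$. The fallback fails as well. Turning smallness of $\int_{\{\rho\ge1/2\}}|(f-f')\oplus(g-g')|^2\,d\mu\otimes\nu$ into sup-norm smallness via Lipschitz regularity requires quantitative lower bounds on $\mu(B(x,r))$ and $\nu(B(y,r))$. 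These are not assumed, and without them you lose the H\"older form needed for \eqref{eq:entropy-bound}. Moreover, for sparse regularizers the active set need not be connected, so smallness of $(f-f')\oplus(g-g')$ on it does not force $f-f'$ to be close to a single constant.

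What is missing is to use the cell scale to build \emph{active rectangles} rather than to bound oscillations, and then to make your monotonicity argument (b) quantitative. Choose $\delta$ so that $\sup_y|\rho(x,y)-\rho(\bar x,y)|\le 1/4$ within a cell; this is where your factor belongs. Put $F_j:=\{y:\rho(x_j,y)\ge3/4\}$ for a point $x_j\in Q_j\cap\cX$. Then $Q_j\times F_j\subset\{\rho\ge1/2\}$, and $\nu(F_j)$ is bounded below by the same argument you used for active sections. Let $D=f-f'$, $E=g-g'$, $\bar D_j=m_j-m_j'$. The exact identity $\int_{Q_j}(D+E(y))^2\,d\mu=\int_{Q_j}(D-\bar D_j)^2\,d\mu+\mu(Q_j)(\bar D_j+E(y))^2$, integrated over $y\in F_j$, summed over $j$ and combined with (a), yields $\|D\|_{L^2(\mu)}^2\le C\|A-A'\|_F+\max_j|\bar D_j|^2$, with no oscillation term at all. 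Active $y$-sections then give $\|E\|_{L^2(\nu)}^2\le C(\|A-A'\|_F+\|D\|_{L^2(\mu)}^2)$. Finally, fix $x$. The quantity $\int\psi'((f'(x)+g-c_A)/\varepsilon)\,d\nu-1$ is the $\nu$-integral of the one-signed difference $\psi'((f'(x)+g-c_A)/\varepsilon)-\rho(x,\cdot)$. The slope bound $|\psi'(s)-\psi'(t)|\ge\kappa|s-t|$ for $\psi'(t)\ge1/2$, a consequence of \eqref{eq:active-curvature}, bounds it below by a constant times $|D(x)|$. Meanwhile \eqref{eq:holder-psi} and Jensen bound it above by $C(\|E\|_{L^2(\nu)}^{1/(1+\beta)}+\|c_A-c_{A'}\|_\infty^{1/(1+\beta)})$. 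This $L^2(\nu)\to L^\infty$ upgrade integrates only in $y$ and needs no small-ball bounds. The resulting estimate has exponent $\|A-A'\|_F^{1/(2(1+\beta))}$, not the $1/(1+\beta)$ you assert in Step 2. Your own (a) is linear in $\|A-A'\|_F$ for a \emph{squared} quantity. The factor $2$ in \eqref{eq:entropy-bound} is exactly the price of this square root, not slack for constants.
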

Define, for $\lambda\in\mathcal P(\mathcal X)$ and $\eta\in\mathcal P(\mathcal Y)$, 
$V_\varepsilon(\lambda,\eta):=\min_{A\in\mathcal D}
\left\{32\|A\|_F^2+\OT^{c_A}(\lambda,\eta)\right\}$.
\begin{lemma}
\label{lem:empirical-reduction}
Under the assumption of  Theorem~\ref{thm:gw-duality}, 
we have, 
\begin{equation}
\begin{aligned}
|S_{2,2,\varepsilon}(\mu_n,\nu_n)-S_{2,2,\varepsilon}(\mu,\nu)|
\le
|\mathsf{GW}^1(\mu_n,\nu_n)-&\mathsf{GW}^1(\mu,\nu)|
+
|V_\varepsilon(\mu_n,\nu_n)-V_\varepsilon(\mu,\nu)|\\
&\quad+
8R_xR_y^2\|\mu_nx\|
+
8R_x^2R_y\|\nu_ny\|.
\end{aligned}
\label{eq:empirical-gw-reduction}
\end{equation}
\end{lemma}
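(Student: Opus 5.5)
The plan is to rerun the completing-the-square argument of \Cref{thm:gw-duality} for the empirical pair $(\mu_n,\nu_n)$, which need not be centered, and to treat the linear cross term in \eqref{eq:distortionexpansion} as a perturbation bounded uniformly over couplings. On the population side nothing new is needed: since $\mu,\nu$ are centered and $M_{\mu,\nu}\le R_xR_y$, \Cref{thm:gw-duality} with $M=R_xR_y$ gives $S_{2,2,\varepsilon}(\mu,\nu)-\mathsf{GW}^1(\mu,\nu)=V_\varepsilon(\mu,\nu)$. The lemma then follows from the triangle inequality once we show
\[
\bigl|S_{2,2,\varepsilon}(\mu_n,\nu_n)-\mathsf{GW}^1(\mu_n,\nu_n)-V_\varepsilon(\mu_n,\nu_n)\bigr|
\le 8R_xR_y^2\|\mu_nx\|+8R_x^2R_y\|\nu_ny\|.
\]

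\textbf{Step 1: expansion for the empirical pair.} The expansion \eqref{eq:distortionexpansion} was derived for an arbitrary $\pi\in\Pi(\mu,\nu)$ before centering was used, so it holds verbatim for $\pi\in\Pi(\mu_n,\nu_n)$ with $a=\mu_nx$ and $b=\nu_ny$. It gives
\[
\cE_\Gamma(\pi)=\mathsf{GW}^1(\mu_n,\nu_n)-4\int\|x\|^2\|y\|^2\,d\pi-8\|B_\pi\|_F^2+\ell(\pi),
\]
\[
\ell(\pi):=8\int\bigl(a^\top x\|y\|^2+b^\top y\|x\|^2\bigr)\,d\pi .
\]
Since $\mu_n,\nu_n$ are supported in $\overline B(0,R_x)$ and $\overline B(0,R_y)$, Cauchy--Schwarz gives $|a^\top x|\,\|y\|^2\le\|a\|R_xR_y^2$ and $|b^\top y|\,\|x\|^2\le\|b\|R_x^2R_y$ pointwise. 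Hence $\sup_\pi|\ell(\pi)|\le\delta_n:=8R_xR_y^2\|\mu_nx\|+8R_x^2R_y\|\nu_ny\|$.

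\textbf{Step 2: completing the square without centering.} Define the reduced functional
\[
G(\pi):=-4\int\|x\|^2\|y\|^2\,d\pi-8\|B_\pi\|_F^2+\varepsilon\,\mathsf D_\varphi(\pi,\mu_n\otimes\nu_n).
\]
Because $\cF_\varepsilon^{\mu_n,\nu_n}=\mathsf{GW}^1(\mu_n,\nu_n)+G+\ell$ with $|\ell|\le\delta_n$, taking infima gives
\[
\Bigl|S_{2,2,\varepsilon}(\mu_n,\nu_n)-\mathsf{GW}^1(\mu_n,\nu_n)-\inf_{\pi}G(\pi)\Bigr|\le\delta_n .
\]
The identity $32\|A\|_F^2-32x^\top$-term bookkeeping is the same as before: $32\|A\|_F^2+\int c_A\,d\pi+\varepsilon\mathsf D_\varphi=G(\pi)+32\|A-\tfrac12B_\pi\|_F^2$ is pure algebra and uses no centering. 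Moreover $\|B_\pi\|_F\le\int\|x\|\|y\|\,d\pi\le R_xR_y$, so $B_\pi/2\in\mathcal D$. Minimizing in $A\in\mathcal D$ and in $\pi$, in either order, therefore yields $\inf_\pi G(\pi)=V_\varepsilon(\mu_n,\nu_n)$. Attainment is not needed here, only equality of the infima. Combining this with Step 1 and the population identity gives \eqref{eq:empirical-gw-reduction}.

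\textbf{Main point of care.} The only real issue is checking that the expansion \eqref{eq:distortionexpansion}, including the definition \eqref{eqn:def_GW1} of $\mathsf{GW}^1$, is valid for non-centered marginals, so that the cross term $\ell$ is the only discrepancy. I would re-verify this by expanding $\|x-x'\|^2\|y-y'\|^2$ under $\pi\otimes\pi$. Everything else is elementary bounding.
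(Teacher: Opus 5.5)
Your proposal is correct and follows essentially the same route as the paper. You apply the expansion \eqref{eq:distortionexpansion}, which retains the non-centered cross term, together with the completed square to $(\mu_n,\nu_n)$, and bound that cross term uniformly over couplings to get $|S_{2,2,\varepsilon}(\mu_n,\nu_n)-\mathsf{GW}^1(\mu_n,\nu_n)-V_\varepsilon(\mu_n,\nu_n)|\le 8R_xR_y^2\|\mu_nx\|+8R_x^2R_y\|\nu_ny\|$; combining this with the centered population identity from \Cref{thm:gw-duality} at $M=R_xR_y$ via the triangle inequality finishes the proof. The point you flag does check out: expanding $\|x-x'\|^2\|y-y'\|^2$ under $\pi\otimes\pi$ confirms the expansion holds without centering, so there is no gap.
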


We now combine these ingredients to obtain the sample complexity
bound. 
\begin{theorem}[Parametric rate of estimation of the regularized GW functional]
\label{thm:parametric_sample_complexity}
Suppose Assumption~\ref{assumption:regularizaion} holds
and $\mu,\nu$ have compact supports. Then there exists a finite constant
$C\geq0$ depending on 
$\varphi,R_x,d_x,R_y,d_y,\varepsilon$, such that
\begin{align}
 \bE\bigl|S_{2,2,\varepsilon}(\mu_n,\nu_n)-S_{2,2,\varepsilon}(\mu,\nu)\bigr|^2
\le Cn^{-1}.\label{eq:main-l2}
\end{align}

\end{theorem}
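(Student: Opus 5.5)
We follow the roadmap in the introduction. First we prove the mean absolute error bound $\bE|S_{2,2,\varepsilon}(\mu_n,\nu_n)-S_{2,2,\varepsilon}(\mu,\nu)|\le Cn^{-1/2}$, and then upgrade it to the squared bound \eqref{eq:main-l2} via Efron--Stein.

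\emph{Reduction.} We start from \Cref{lem:empirical-reduction}. The centering terms satisfy $\bE\|\mu_n x\|\le (\bE\|\mu_nx\|^2)^{1/2}\le R_x n^{-1/2}$, and similarly for $\nu_n y$, since $\mu$ and $\nu$ are centered. The term $\mathsf{GW}^1(\mu_n,\nu_n)-\mathsf{GW}^1(\mu,\nu)$ is a combination of V-statistics with bounded kernels ($\|x-x'\|^4\le 16R_x^4$) and of the product $m_2(\mu_n)m_2(\nu_n)$. Standard variance bounds for U/V-statistics, with an $O(1/n)$ diagonal bias, give an $L^2$ error of order $n^{-1/2}$.

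\emph{The main term $V_\varepsilon$.} For the lower side, let $A_n$ be a minimizer for $(\mu_n,\nu_n)$ and $(f_n,g_n)$ the normalized empirical Schrödinger potentials. By weak duality with the population potentials of $c_{A_n}$,
\[
V_\varepsilon(\mu_n,\nu_n)-V_\varepsilon(\mu,\nu)\ge \OT^{c_{A_n}}(\mu_n,\nu_n)-\OT^{c_{A_n}}(\mu,\nu)\ge (\mu_n\otimes\nu_n-\mu\otimes\nu)H_{A_n,f_{A_n},g_{A_n}},
\]
where $\OT^{c_A}(\lambda,\eta)\ge \Phi^{c_A}_{\lambda,\eta}(f,g)=(\lambda\otimes\eta)H_{A,f,g}$ for any $(f,g)$, and equality holds at the population optimum. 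Hence $V_\varepsilon(\mu_n,\nu_n)-V_\varepsilon(\mu,\nu)\ge -\sup_{h\in\cH}|(\mu_n\otimes\nu_n-\mu\otimes\nu)h|$.

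For the upper side, fix a population minimizer $A^*$:
\[
V_\varepsilon(\mu_n,\nu_n)-V_\varepsilon(\mu,\nu)\le \OT^{c_{A^*}}(\mu_n,\nu_n)-\OT^{c_{A^*}}(\mu,\nu).
\]
The positive part of this single fixed-cost quantity is bounded by its expectation plus its fluctuation. The bias is $O(n^{-1/2})$ by \Cref{proposition:ROT_bias_parametric_bound} with $\|c_{A^*}\|_\infty\le 20R_x^2R_y^2$ (\Cref{lem:gw-cost-bounds}). The fluctuation $\OT^{c_{A^*}}(\mu_n,\nu_n)-\bE\OT^{c_{A^*}}(\mu_n,\nu_n)$ has standard deviation $O(n^{-1/2})$ by Efron--Stein: replacing one sample point changes the ROT value by $O(1/n)$, because potentials are bounded by $B$ and $\psi$ is bounded on $I$.

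\emph{Empirical process.} It remains to show $\bE\sup_{h\in\cH}|(\mu_n\otimes\nu_n-\mu\otimes\nu)h|\le Cn^{-1/2}$. Decompose
\[
\mu_n\otimes\nu_n-\mu\otimes\nu=(\mu_n-\mu)\otimes\nu+\mu\otimes(\nu_n-\nu)+(\mu_n-\mu)\otimes(\nu_n-\nu).
\]
The first two terms are indexed by the function classes $x\mapsto\int h(x,y)\,d\nu(y)$ and $y\mapsto\int h(x,y)\,d\mu(x)$. These inherit the sup-norm covering bound of \Cref{lem:covering}, since integrating against a probability measure is $1$-Lipschitz in $\|\cdot\|_\infty$. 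Dudley's entropy integral with $\log N\le C\log(1+C/\eta)$ is finite, so each term contributes $Cn^{-1/2}$ after symmetrization.

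The degenerate cross term is the main technical obstacle. We handle it by conditioning on the $Y$-sample: for fixed $\nu_n$, the class $\{x\mapsto\int h(x,y)\,d(\nu_n-\nu)(y)\}$ has uniform bound $2\|h\|_\infty$ and covering numbers controlled by those of $\cH$ at scale $\eta/2$. Chaining over the $X$-sample gives $Cn^{-1/2}$ conditionally, uniformly in $\nu_n$. Since the covering bound in \Cref{lem:covering} is uniform in $\|\cdot\|_\infty$, this step is legitimate; measurability is handled by separability of $\cH$ in $\|\cdot\|_\infty$, which follows from total boundedness. Combining the pieces gives $\bE|S_{2,2,\varepsilon}(\mu_n,\nu_n)-S_{2,2,\varepsilon}(\mu,\nu)|\le Cn^{-1/2}$.

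\emph{Squaring via Efron--Stein.} Write
\[
\bE|S_n-S|^2=\operatorname{Var}(S_n)+(\bE S_n-S)^2,\qquad S_n:=S_{2,2,\varepsilon}(\mu_n,\nu_n),\ S:=S_{2,2,\varepsilon}(\mu,\nu).
\]
The bias satisfies $|\bE S_n-S|\le \bE|S_n-S|\le Cn^{-1/2}$. For the variance, changing one $X_i$ (or one $Y_j$) changes $S_n$ by at most $C/n$. Indeed, $\mathsf{GW}^1$ and the moment terms change by $O(1/n)$ by boundedness. For $V_\varepsilon$, which is a min over $A\in\mathcal D$ of a max over potentials bounded by $B$, each dual objective changes by $O(1/n)$ uniformly in $(A,f,g)$, so the min-max value does too; alternatively, the shadow bound of \Cref{thm:continuity_regualarized_GW_value} with $W_\infty$-type arguments gives the same. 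Efron--Stein then yields $\operatorname{Var}(S_n)\le 2n\cdot C^2/n^2=O(1/n)$, which proves \eqref{eq:main-l2}.
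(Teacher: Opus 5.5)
Your outline is sound and shares the paper's skeleton: \Cref{lem:empirical-reduction}, a two-sided comparison of $V_\varepsilon$ through a population minimizer $A^*$ and the class $\cH$, \Cref{proposition:ROT_bias_parametric_bound}, chaining with \Cref{lem:covering}, then Efron--Stein. Two sub-steps differ from the paper.

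\emph{The empirical process.} For $\sup_{h\in\cH}|(\mu_n\otimes\nu_n-\mu\otimes\nu)h|$ the paper does not decompose. It uses Hoeffding's bound for two-sample U-statistics to get sub-Gaussian increments with respect to $n^{-1/2}\|\cdot\|_\infty$, and chains once. Your split into two linear empirical processes plus a cross term, treated conditionally on the $Y$-sample, is also valid, since the sup-norm covers of $\cH$ do not depend on the sample. It needs only one-sample tools, at the price of three chaining arguments.

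\emph{The upper side.} You add a separate Efron--Stein concentration of $\OT^{c_{A^*}}(\mu_n,\nu_n)$. The paper avoids this by writing $\OT^{c_{A^*}}(\mu_n,\nu_n)-\OT^{c_{A^*}}(\mu,\nu)=Z_{A^*,n}+R_{A^*,n}$. Here $Z_{A^*,n}$ is already dominated by the supremum above, and $R_{A^*,n}\ge0$ has mean equal to the bias.

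\emph{The gap.} The step that does not go through as written is the bounded-differences claim for $S_{2,2,\varepsilon}(\mu_n,\nu_n)$ in your last paragraph.
\begin{itemize}
\item \Cref{lem:empirical-reduction} is only an inequality for the uncentered empirical measures. The remainder $S_{2,2,\varepsilon}(\mu_n,\nu_n)-\mathsf{GW}^1(\mu_n,\nu_n)-V_\varepsilon(\mu_n,\nu_n)$ is merely bounded in size by $8R_xR_y^2\|\mu_nx\|+8R_x^2R_y\|\nu_ny\|$, which is of order $n^{-1/2}$. A size bound gives no $O(1/n)$ control on how the remainder moves when one sample is replaced. So bounded differences of $\mathsf{GW}^1$, $V_\varepsilon$ and the centering terms do not transfer to the regularized GW value.
\item Your fallback via $W_\infty$ also fails. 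Let $\mu_n^{(i)}$ denote the empirical measure with $X_i$ replaced by an independent copy. Then $W_\infty(\mu_n,\mu_n^{(i)})$ is typically of order one, not $O(1/n)$.
\end{itemize}

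\emph{Repair.} Use \Cref{thm:continuity_regualarized_GW_value} with $p=1$, which \Cref{assumption:Agamma} permits for bounded supports with $rs=4$. \Cref{proposition:Agamma} gives $\Lambda=8D_0^3$ with $D_0=2(R_x\vee R_y)$. Since $W_1(\mu_n,\mu_n^{(i)})\le 2R_x/n$ (and analogously for a replaced $Y_j$), the value changes by at most $16D_0^3(R_x\vee R_y)/n$. This shadow argument is shorter than the paper's Step~2. That step rewrites the empirical problem over the fixed polytope $\mathcal W_n$, precisely because $\Pi(\mu_n,\nu_n)$ moves with the sample, and then shows that $F_{\mathbf z}$ changes by at most $2K_\Gamma/n$ uniformly in $w$.

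Alternatively, skip bounded differences of the GW value entirely. Square the inequality of \Cref{lem:empirical-reduction} and apply Efron--Stein only to $V_\varepsilon(\mu_n,\nu_n)$, where your fixed-domain dual argument is correct. Then use $\bE\|\mu_nx\|^2\le R_x^2/n$ and $\bE\|\nu_ny\|^2\le R_y^2/n$.
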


\begin{proof}[Proof of Theorem~\ref{thm:parametric_sample_complexity}]
We divide the proof into three steps.

\noindent
\textit{Step 1: mean absolute error bound.}
We first prove
\begin{align}\label{eq:main-l1}
\bE\bigl|S_{2,2,\varepsilon}(\mu_n,\nu_n)-S_{2,2,\varepsilon}(\mu,\nu)\bigr|\,\,\,
\le C n^{-1/2},
\end{align}
for some constant $C\geq0$ depending on  $\varphi,R_x,d_x,R_y,d_y,\varepsilon$. 
By \Cref{lem:empirical-reduction}, it suffices to bound $|\mathsf{GW}^1(\mu_n,\nu_n)
-
\mathsf{GW}^1(\mu,\nu)|, |
V_\varepsilon(\mu_n,\nu_n)
-
V_\varepsilon(\mu,\nu)|, \|\mu_nx\|$ and $
\|\nu_ny\|$.
Define $ S_n:=\sup_{h\in\cH}|((\mu_n\otimes\nu_n) - (\mu\otimes\nu) )h|$. 
Hoeffding's inequality for two-sample U-statistics \cite{hoeffding1963probability} implies that $Z_h:=((\mu_n\otimes\nu_n) - (\mu\otimes\nu) )h$ has sub-Gaussian increments with respect to \(n^{-1/2}\|\cdot\|_\infty\).
Then \Cref{lem:covering} combined with chaining \cite[Theorem 5.22]{Wainwright_2019} gives
\begin{equation}
 \bE S_n\le C n^{-1/2}.
 \label{eq:Sn-bound}
\end{equation}
Abbreviate
\[
 \mathrm{OT}_A:=\OT^{c_A}(\mu,\nu),\quad
 \widehat{\mathrm{OT}}_A:=\OT^{c_A}(\mu_n,\nu_n),\quad
 V:=V_\varepsilon (\mu,\nu),\quad V_n:=V_\varepsilon(\mu_n,\nu_n).
\]
Theorem~\ref{thm:gw-duality} guarantees a population minimizer $A^*\in \mathcal D$. Strong duality for ROT gives $\mathrm{OT}_{A^*}=(\mu\otimes\nu) H_{A^*}$.
Define 
\begin{equation*}
 Z_{{A^*},n}:=((\mu_n\otimes\nu_n) -(\mu\otimes\nu) )H_{A^*},\qquad
 R_{{A^*},n}:=\widehat{\mathrm{OT}}_{A^*}-(\mu_n\otimes\nu_n)  H_{A^*}\ge0.
\end{equation*}
The inequality is empirical duality.
%\bE Z_{{A^*},n}=0.
Thus \Cref{proposition:ROT_bias_parametric_bound} gives
\begin{equation}
 \bE R_{{A^*},n}
 =\bE\widehat{\mathrm{OT}}_{A^*}-\mathrm{OT}_{A^*}
 \le 
 C n^{-1/2}.
 \label{eq:remainder-mean}
\end{equation}
Evaluation at $A^*$ gives
\[
 V_n-V\le {\widehat{\mathsf {OT}}}_{A^*}
 -
 \mathsf {OT}_{A^*}
 =Z_{A^*,n}+R_{A^*,n}\le S_n+R_{A^*,n}.
\]
For the opposite direction, take any $A\in\mathcal D$ with associated integrand $H_A\in\mathcal H$.  Empirical duality and the definition of $S_n$ imply
\[
 \widehat{\mathrm{OT}}_A
 \ge (\mu_n\otimes\nu_n)H_A
 \ge (\mu\otimes\nu)H_A-S_n
 = \mathrm{OT}_A-S_n.
\]
Taking infimum over $A\in\mathcal D$ yields
\[
 V_n
 =\inf_{A\in\mathcal D}
   \bigl\{32\|A\|_F^2+\widehat{\mathrm{OT}}_A\bigr\}
 \ge
 \inf_{A\in\mathcal D}
   \bigl\{32\|A\|_F^2+\mathrm{OT}_A\bigr\}-S_n
 =V-S_n.
\]
As a consequence, we know that
\begin{equation}
 |V_n-V|\le R_{A^*,n}+S_n.
 \label{eq:one-sided-transfer}
\end{equation}
By 
\Cref{eq:Sn-bound,eq:remainder-mean,eq:one-sided-transfer},
\begin{equation}
 \bE|V_n-V|
 \le
 C
 n^{-1/2}.
 \label{eq:outer-rate}
\end{equation}
Cauchy-Schwarz inequality gives that
\begin{align}\label{eqn:moment_bound}
\mathbb E|m_2(\mu_n)-m_2(\mu)|
\le \sqrt{\frac{\operatorname{Var}(\|X_1\|^2)}{n}}
\le \frac{R_x^2}{\sqrt n},
\qquad
\mathbb E|m_2(\nu_n)-m_2(\nu)|
\le \frac{R_y^2}{\sqrt n}.
\end{align}
For $h(x,x'):=\|x-x'\|^4 \leq 16R_x^4$, set $Q:=\iint h\,d(\mu\otimes\mu)$, $Q_n:=\frac1{n^2}\sum_{1\leq i,j\leq n} h(X_i,X_j)$.
Since $h(x,x)=0$, we have $\mathbb E Q_n=(1-1/n)Q$.
Independence gives
\[
\begin{aligned}
\operatorname{Var}(Q_n)
&=
\frac1{n^4}\sum_{1\leq i,j,k,l\leq n}
\operatorname{Cov}\bigl(h(X_i,X_j),h(X_k,X_l)\bigr)\\
&\le
\frac{(16R_x^4)^2}{n^4}\sum_{1\leq i,j,k,l\leq n}
\left(
\mathbf 1_{\{i=k\}}+\mathbf 1_{\{i=l\}}
+\mathbf 1_{\{j=k\}}+\mathbf 1_{\{j=l\}}
\right)
=
\frac{4(16R_x^4)^2}{n}.\\
\mathbb E|Q_n-Q|
&\le \mathbb E|Q_n- \bE Q_n| + |\bE Q_n - Q|
\le\sqrt{\operatorname{Var}(Q_n)}+\frac Qn
\le\frac{48R_x^4}{\sqrt n}.
\end{aligned}
\]
The same bounds hold for $\nu_n$, with $R_x$ replaced by $R_y$.
At the same time, note that
\begin{align*}
|m_2(\mu_n)m_2(\nu_n)-m_2(\mu)m_2(\nu)|
\le 
R_y^2|m_2(\mu_n)-m_2(\mu)|
+
R_x^2|m_2(\nu_n)-m_2(\nu)|
\end{align*}
and it follows that
\begin{align}\label{eqn:GW1_bound}
 \bE|\mathsf{GW}^1(\mu_n,\nu_n)-\mathsf{GW}^1(\mu,\nu)|\le 
 C n^{-1/2}.
\end{align}
At the same time, Jensen's inequality implies that
\begin{align}\label{eqn:empirical_moment_bound}
\mathbb E\|\mu_nx\|
\le
\biggl(
\mathbb E\Bigl\|\frac1n\sum_{i=1}^nX_i\Bigr\|^2
\biggr)^{1/2}
=
\sqrt{\frac{m_2(\mu)}n}
\le\frac{R_x}{\sqrt n},
\qquad
\mathbb E\|\nu_ny\|
\le
\sqrt{\frac{m_2(\nu)}n}
\le\frac{R_y}{\sqrt n}.
\end{align}
Combining \Cref{eq:empirical-gw-reduction,eqn:moment_bound,eqn:GW1_bound,eq:outer-rate,eqn:empirical_moment_bound} gives
\begin{align*}
 \bE|S_{2,2,\varepsilon}(\mu_n,\nu_n)-S_{2,2,\varepsilon}(\mu,\nu)|
 \le&\bE|\mathsf{GW}^1(\mu_n, \nu_n)
 -\mathsf{GW}^1(\mu,\nu)|
       +\bE|V_n-V|\\
 &\quad+8R_xR_y^2\bE\|\mu_nx\|
+8R_x^2R_y\bE\|\nu_ny\|
\le C n^{-1/2}.
\end{align*}
\textit{Step 2: reformulation of regularized GW under empirical input.}
We prove \Cref{eq:labeled-grid} in this step. Let
$K_\Gamma
:=
\|\Gamma_{2,2}\|_{\infty,(\mathcal X\times\mathcal Y)^2}$ and   
\[
\mathcal W_n
:=
\biggl\{
w\in[0,\infty)^{n\times n}:
\sum_{j=1}^n w_{ij}=\frac1n\ \text{for all }i\in[n],\quad
\sum_{i=1}^n w_{ij}=\frac1n\ \text{for all }j\in[n]
\biggr\}.
\]
Given
$\mathbf z=(x_1,\ldots,x_n,y_1,\ldots,y_n)
\in\mathcal X^n\times\mathcal Y^n$,
define
\[
F_{\mathbf z}(w)
:=
\sum_{1\leq i,j,k,l \leq n}
\Gamma_{\mathbf z}^{ij,kl}w_{ij}w_{kl}
+
\frac{\varepsilon}{n^2}
\sum_{1\leq i,j\leq n}\varphi(n^2w_{ij}), \quad
\Gamma_{\mathbf z}^{ij,kl}
:=
\Gamma((x_i,y_j),(x_k,y_l)).
\]
We claim
\begin{equation}
G_n(\mathbf z)
:=
S_{2,2,\varepsilon}\biggl(
\frac1n\sum_{i=1}^n\delta_{x_i},
\frac1n\sum_{j=1}^n\delta_{y_j}
\biggr)
=
\inf_{w\in\mathcal W_n}F_{\mathbf z}(w).
\label{eq:labeled-grid}
\end{equation}
Write $\mu_{\mathbf z}:=\frac1n\sum_{i=1}^n\delta_{x_i}, \nu_{\mathbf z}:=\frac1n\sum_{j=1}^n\delta_{y_j}$.
Let $\mathcal A:=\{x_1,\ldots,x_n\}$ and
$\mathcal B:=\{y_1,\ldots,y_n\}$ be the sets of distinct
observed locations. For $a\in\mathcal A$ and $b\in\mathcal B$,
define
\[
I_a:=\{i\in[n]:x_i=a\},\qquad
J_b:=\{j\in[n]:y_j=b\},\qquad
N_a:=|I_a|,\qquad M_b:=|J_b|.
\]
Every $\pi\in\Pi(\mu_{\mathbf z},\nu_{\mathbf z})$ is supported on $\mathcal A\times\mathcal B$, and we write $\pi_{ab}:=\pi(\{(a,b)\})$. 
%Since the empirical product measure assigns mass $N_aM_b/n^2>0$ to $(a,b)$, such a coupling is absolutely continuous with respect to $\mu_{\mathbf z}\otimes\nu_{\mathbf z}$. 
Then we have
\begin{align}\label{eqn:discrete_divergence}
\mathsf D_\varphi(\pi,\mu_{\mathbf z}\otimes\nu_{\mathbf z})
=
\sum_{a\in\mathcal A}\sum_{b\in\mathcal B}
\frac{N_aM_b}{n^2}
\varphi\!\left(\frac{n^2\pi_{ab}}{N_aM_b}\right).
\end{align}
For $w\in\mathcal W_n$, define
$
\pi^w:=\sum_{1\leq i,j\leq n} w_{ij}\delta_{(x_i,y_j)}.
$
The row and column constraints imply that
$\pi^w\in\Pi(\mu_{\mathbf z},\nu_{\mathbf z})$, and
\begin{align}\label{eqn:discrete_distortion}
\iint\Gamma(u,v)\,d\pi^w(u)\,d\pi^w(v)
=
\sum_{1\leq i,j,k,l\leq n}
\Gamma_{\mathbf z}^{ij,kl}w_{ij}w_{kl}.
\end{align}
Moreover, with
$\pi^w_{ab}:=\sum_{i\in I_a,j\in J_b}w_{ij}$,
Jensen's inequality gives
\[
\frac1{n^2}
\sum_{\substack{i\in I_a\\j\in J_b}}
\varphi(n^2w_{ij})
\ge
\frac{N_aM_b}{n^2}
\varphi\!\left(\frac{n^2\pi^w_{ab}}{N_aM_b}\right).
\]
Summing over $a\in\mathcal A,b\in\mathcal B$ shows that
\[
\mathsf D_\varphi(\pi^w,\mu_{\mathbf z}\otimes\nu_{\mathbf z})
\le
\frac1{n^2}\sum_{i,j=1}^n\varphi(n^2w_{ij}).
\]
Consequently, $G_n(\mathbf z)\le F_{\mathbf z}(w)$ for every
$w\in\mathcal W_n$, and hence
\begin{align}\label{eqn:discrete_formula_proof_first}
G_n(\mathbf z)\le
\inf_{w\in\mathcal W_n}F_{\mathbf z}(w).
\end{align}
Conversely, given
$\pi\in\Pi(\mu_{\mathbf z},\nu_{\mathbf z})$, define $w^\pi_{ij}:=\frac{\pi_{ab}}{N_aM_b}$
for $i\in I_a, j\in J_b$.
For $i\in I_a$, 
\[
\sum_{\ell=1}^n w^\pi_{i\ell}
=
\sum_{b'\in\cB}\sum_{\ell\in J_{b'}}w^\pi_{i\ell}
=
\sum_{b'\in\cB}\sum_{\ell\in J_{b'}}\frac{\pi_{ab'}}{N_aM_{b'}}
=
\frac1{N_a}\sum_{b'\in\mathcal B}\pi_{ab'}
=
\frac{\mu_{\mathbf z}(\{a\})}{N_a}
=
\frac1n.
\]
Analogously, $\sum_{k=1}^n w^\pi_{kj}
=
\frac1{M_b}\sum_{a'\in\mathcal A}\pi_{a'b}
=
\frac1n$ for $j\in J_b$.
Thus $w^\pi\in\mathcal W_n$, and its induced coupling is
$\pi^{w^\pi}=\pi$. In addition,
\[
\frac1{n^2}\sum_{i,j=1}^n\varphi(n^2w^\pi_{ij})
=
\sum_{a\in\mathcal A}\sum_{b\in\mathcal B}
\frac{N_aM_b}{n^2}
\varphi\!\left(\frac{n^2\pi_{ab}}{N_aM_b}\right)
=
\mathsf D_\varphi(\pi,\mu_{\mathbf z}\otimes\nu_{\mathbf z}).
\]
Therefore,
\[
F_{\mathbf z}(w^\pi)
=
\iint\Gamma(u,v)\,d\pi(u)\,d\pi(v)
+
\varepsilon
D_\varphi(\pi,\mu_{\mathbf z}\otimes\nu_{\mathbf z}).
\]
Taking the infimum over $\pi$ gives
\begin{align}\label{eqn:discrete_formula_proof_second}
\inf_{w\in\mathcal W_n}F_{\mathbf z}(w)\le G_n(\mathbf z).
\end{align}
Combining \Cref{eqn:discrete_formula_proof_first,eqn:discrete_formula_proof_second} proves \eqref{eq:labeled-grid}. 

\noindent
\textit{Step 3: Efron--Stein inequality.} 
Suppose now that $\mathbf z'$ differs from $\mathbf z$ only
in the coordinate $x_r$. As a consequence, %For fixed $w\in\mathcal W_n$, the coefficient $\Gamma_{\mathbf z}^{ij,kl}$ can change only when $i=r$ or $k=r$. 
\begin{align*}
|F_{\mathbf z}(w)-F_{\mathbf z'}(w)|
\le
K_\Gamma
\sum_{\substack{1\le i,j,k,l\le n\\
                 i=r\ \text{or}\ k=r}}
w_{ij}w_{kl}
%=K_\Gamma \biggl(2\Bigl(\sum_{j=1}^nw_{rj}\Bigr)\Bigl(\sum_{1\leq k,l\leq n} w_{kl}\Bigr)-\Bigl(\sum_{j=1}^n w_{rj}\Bigr)^2\biggr)
%=K_\Gamma\left(\frac2n-\frac1{n^2}\right)
\le
\frac{2K_\Gamma}{n}.
\end{align*}
Taking infima gives
\[
|G_n(\mathbf z)-G_n(\mathbf z')|
\le
\sup_{w\in\mathcal W_n}
|F_{\mathbf z}(w)-F_{\mathbf z'}(w)|
\le
\frac{2K_\Gamma}{n}.
\]
The same bound holds analogously when a single coordinate $y_r$ is
replaced.

For $\mathbf Z=(X_1,\ldots,X_n,Y_1,\ldots,Y_n)$, denote $\widehat G:=G_n(\mathbf Z)=S_{2,2,\varepsilon}(\mu_n,\nu_n)$.
For $r\in [2n]$, let $\mathbf Z^{(r)}$ denote the sample
vector obtained by replacing its $r$-th coordinate with an
independent copy, and set
$\widehat G^{(r)}:=G_n(\mathbf Z^{(r)})$.
Applying the Efron--Stein inequality \cite{efronstein1981jackknife,lugosiboucheron2003concentration} yields
%to the $2n$ independent observations 
\[
\operatorname{Var}(\widehat G)
\le
\frac12\sum_{r=1}^{2n}
\mathbb E\left[
(\widehat G-\widehat G^{(r)})^2
\right]
%\le\frac12(2n)\left(\frac{2K_\Gamma}{n}\right)^2
\le
\frac{4K_\Gamma^2}{n}.
\]
Finally, the bias--variance identity and \eqref{eq:main-l1} give
\[
\mathbb E\left|\widehat G-S_{2,2,\varepsilon}(\mu,\nu)\right|^2
=
\operatorname{Var}(\widehat G)
+
\left(\mathbb E\widehat G-S_{2,2,\varepsilon}(\mu,\nu)\right)^2
\le
\frac{4K_\Gamma^2+C^2}{n}.
\]
\end{proof}

\bibliography{GW_stability}

\newpage
\appendix

\begin{center}
    \textbf{\huge Appendix}
\end{center}

\section{Omitted proofs}\label{appendix_omitted_proofs}

\begin{proof}[Proof of \Cref{proposition:GW-attainment}]
Fix $x_0\in\cX$ and $y_0\in\cY$. Since
$|a^s-b^s|^r\le a^{rs}+b^{rs}$ for $a,b\ge0$,
the marginal constraints and the triangle inequality give,
for every $\pi,\theta\in\Pi(\mu,\nu)$,
\begin{align*}
\iint\Gamma\,d\pi\,d\theta
&\le
\iint_{\cX^2}d_{\cX}(x,x')^{rs}\,d\mu(x)d\mu(x')
+
\iint_{\cY^2}d_{\cY}(y,y')^{rs}\,d\nu(y)d\nu(y')
\notag\\
&\le
2^{rs}\left(
\int_{\cX}d_{\cX}(x,x_0)^{rs}\,d\mu(x)
+
\int_{\cY}d_{\cY}(y,y_0)^{rs}\,d\nu(y)
\right)<\infty.
\end{align*}
Since
$\mathsf{D}_\varphi(\mu\otimes\nu,\mu\otimes\nu)=\varphi(1)=0$,
\eqref{eq:gw-finite-value-bounds} is proved.

The set $\Pi(\mu,\nu)$ is uniformly tight and weakly closed,
thus weakly compact by Prokhorov's theorem.
If $\pi_n\Rightarrow\pi$, then
$\pi_n\otimes\pi_n\Rightarrow\pi\otimes\pi$; since $\Gamma$
is continuous and nonnegative, Portmanteau's theorem yields
\[
\cE_\Gamma(\pi)
\le \liminf_{n\to\infty}\cE_\Gamma(\pi_n).
\]
Furthermore, superlinearity of $\varphi$ implies that its
convex conjugate
$\psi(a):=\sup_{t\ge0}\{at-\varphi(t)\}$ is finite
for every $a\in\mathbb R$. Consequently,
$\mathsf{D}_\varphi(\,\cdot\,,\mu\otimes\nu)$ is weakly lower
semicontinuous by
\cite[Corollary~51]{birrell2022fgamma}.

Both objectives are therefore nonnegative and weakly lower
semicontinuous on the weakly compact set $\Pi(\mu,\nu)$,
and are finite at $\mu\otimes\nu$. They attain their minima,
and their minimizer sets are weakly closed, hence weakly
compact. Finally, every regularized minimizer satisfies
\[
\varepsilon \mathsf{D}_\varphi(\pi^*,\mu\otimes\nu)
\le \mathcal F_\varepsilon^{\mu,\nu}(\pi^*)
\le \mathcal F_\varepsilon^{\mu,\nu}(\mu\otimes\nu)
=\cE_\Gamma(\mu\otimes\nu)<\infty,
\]
which proves the divergence bound.
\end{proof}

\begin{proof}[Proof of \Cref{lemma:conjugate-regularity}]
Under \Cref{assumption:regularizaion}, for $s\ge1$ we have
$\varphi''(s)\ge[((\lambda_0+\lambda_1)\vee\lambda_2)(1+s)]^{-1}$, which implies
$\varphi'(s)\to\infty$ as $s\to\infty$ and hence $\varphi(s)/s\to\infty$
as $s\to\infty$.
Denote $\ell:=\lim_{r\downarrow0}\varphi'(r)<1$.
Standard convex conjugacy \cite{rockafellar1998variational} therefore yields a finite
$\psi\in\cC^1(\bR)$ with
\begin{equation}
\label{eq:inverse-formula}
\psi'(s)=
\begin{cases}
0, & s\le\ell,\\
(\varphi')^{-1}(s), & s>\ell.
\end{cases}
\end{equation}
In particular, $\psi'\ge0$, $\psi'(1)=1$, and
$\psi'(s)\to\infty$ as $s\to\infty$.
The inverse-function theorem gives
\begin{equation}
\label{eq:inverse-second-derivative}
\psi''(s)=\frac{1}{\varphi''(\psi'(s))}>0,
\qquad s>\ell,
\end{equation}
with $\psi''$ continuous there.
Since $\ell<1$, $\psi$ is strictly convex on
$[1-\delta,\infty)$ for some $\delta>0$.
Let $I$ be a compact interval and set $R:=1+\max_I\psi'$.
By \eqref{eq:Aphi}, for $0<r\le R$,
\begin{align*}
\varphi''(r)
&\ge
\frac{r^\beta}
{\lambda_0+\lambda_1r^\beta+\lambda_2r^{\beta+1}}
%\ge\frac{r^\beta}{(\lambda_0+\lambda_1+\lambda_2r)(1+r)^\beta}
\ge
\frac{r^\beta}{((\lambda_0+\lambda_1)\vee\lambda_2)(1+R)^{\beta+1}}.
\end{align*}
For $s<t$ in $I$, put $a:=\psi'(s)$ and $b:=\psi'(t)$.
The case $a=b$ is immediate.
If $0<a<b$, \eqref{eq:inverse-formula} gives
\[
t-s=\varphi'(b)-\varphi'(a)
=\int_a^b\varphi''(r)\,dr.
\]
If $a=0<b$, then $\ell$ is finite, $s\le\ell$, and
\[
\int_0^b\varphi''(r)\,dr
=
\lim_{\eta\downarrow0}
\bigl(\varphi'(b)-\varphi'(\eta)\bigr)
=t-\ell\le t-s.
\]
Thus, in both cases,
\[
t-s %\geq \varphi'(b) - \varphi'(a)
\ge\int_a^b\varphi''(r)\,dr
\geq
\frac{(b-a)^{\beta+1}}
{(\beta+1)((\lambda_0+\lambda_1)\vee\lambda_2)(1+R)^{\beta+1}}.
\]
This proves \eqref{eq:holder-psi}.
Finally, if $\sup I<\varphi'(\theta)$, the assertion is vacuous.
Otherwise, since $\varphi'(\theta/2)>\ell$ and $\psi''$ is continuous
and positive on $(\ell,\infty)$, set
\[
m:=\min_{u\in[\varphi'(\theta/2),\,\sup I]}\psi''(u)>0,
\qquad
\rho:=\min\left\{1,
\frac{\varphi'(\theta)-\varphi'(\theta/2)}
{1+\operatorname{diam}I}\right\}\in(0,1].
\]
For $s,t\in I$ with $\psi'(t)\ge\theta$, we have
$t\ge\varphi'(\theta)$ and
\[
\varphi'(\theta/2)
\le t-\rho\operatorname{diam}I
\le t+\rho(s-t)\le\sup I.
\]
Thus both $t$ and $t+\rho(s-t)$ belong to
$[\varphi'(\theta/2),\sup I]$.
Strong convexity on this interval and convexity of
$B_\psi(\cdot,t)$, with $B_\psi(t,t)=0$, yield
\[
\frac{m\rho^2}{2}|s-t|^2
\le B_\psi(t+\rho(s-t),t)
\le \rho B_\psi(s,t).
\]
Hence \eqref{eq:active-curvature} holds with
$\kappa_{I,\theta}=m\rho/2>0$.
\end{proof}

\begin{proof}[Proof of \Cref{lemma:GW-linearization}]
By the moment assumption,
$c_*$ is well-defined and integrable with respect to every
$\eta\in\Pi(\mu,\nu)$. 
For $\theta\in\Pi(\mu,\nu)$, suppose without loss of generality $\mathsf{D}_\varphi(\theta,\mu\otimes\nu)<\infty$
and set
$\pi_t:=(1-t)\pi^*+t\theta$ for $0<t<1$.
Symmetry of $\Gamma$ gives
\[
\cE_\Gamma(\pi_t)-\cE_\Gamma(\pi^*)
=
t\int c_*\,d(\theta-\pi^*)
+t^2\cE_\Gamma(\theta-\pi^*).
\]
Moreover, convexity yields
\[
\mathsf{D}_\varphi(\pi_t,\mu\otimes\nu)
-
\mathsf{D}_\varphi
(\pi^*,\mu\otimes\nu)
\le
t
\bigl(
\mathsf{D}_\varphi(\theta,\mu\otimes\nu)-\mathsf{D}_\varphi(\pi^*,\mu\otimes\nu)
\bigr).
\]
Combining with optimality of $\pi^*$, we obtain
\[
\begin{aligned}
0
&\le
\cE_\Gamma(\pi_t)-\cE_\Gamma(\pi^*)
+\varepsilon\bigl(\mathsf{D}_\varphi(\pi_t,\mu\otimes\nu)-\mathsf{D}_\varphi(\pi^*,\mu\otimes\nu)\bigr)\\
&\le
t\left[
\int c_*\,d(\theta-\pi^*)
+\varepsilon\bigl(\mathsf{D}_\varphi(\theta,\mu\otimes\nu)-\mathsf{D}_\varphi(\pi^*,\mu\otimes\nu)\bigr)
\right]
+t^2\cE_\Gamma(\theta-\pi^*).
\end{aligned}
\]
Dividing by $t$ and letting $t\downarrow0$ gives
\[
\int c_*\,d\theta
+
\varepsilon\, \mathsf{D}_\varphi(\theta,\mu\otimes\nu)
\ge
\int c_*\,d\pi^*
+
\varepsilon \,\mathsf{D}_\varphi(\pi^*,\mu\otimes\nu).
\]
Then \eqref{eq:GW-linearized-OT} follows.
\end{proof}

\begin{proof}[Proof of \Cref{proposition:Agamma}]

We write $\Gamma_{r,s}=\Gamma$ and adopt the convention $1/p=0$ for $p=\infty$.
For $t>0$, set
\[
\begin{aligned}
T_t
:=&
\iint_{\cX\times\cX}d_{\cX}(x_1,x_2)^t
\,d(\mu\otimes\mu)(x_1,x_2)
+
\iint_{\cX\times\cX}d_{\cX}(x_1,x_2)^t
\,d (\tilde\mu\otimes\tilde\mu)(x_1,x_2)\\
&+
\iint_{\cY\times\cY}d_{\cY}(y_1,y_2)^t
\,d(\nu\otimes\nu)(y_1,y_2)
+
\iint_{\cY\times\cY}d_{\cY}(y_1,y_2)^t
\,d(\tilde\nu\otimes\tilde\nu)(y_1,y_2).
\end{aligned}
\]
\Cref{assumption:Agamma} implies $T_{rs}<\infty$.
Moreover,
\[
0\le
\Gamma((x,y),(x',y'))
\le
d_{\cX}(x,x')^{rs}+d_{\cY}(y,y')^{rs}.
\]
Hence, for every $\pi\in\Pi(\mu,\nu)$ and
$\widetilde\pi\in\Pi(\tilde\mu,\tilde\nu)$,
\[
0\le
\cE_{\Gamma}(\pi)+\cE_{\Gamma}(\widetilde\pi)
\le T_{rs}<\infty.
\]
Fix such $\pi$ and $\widetilde\pi$.
If $W_p(\pi,\widetilde\pi)=\infty$,
\eqref{eq:Agamma} holds for every $\Lambda>0$.
We therefore assume that $W_p(\pi,\widetilde\pi)<\infty$
and take any $\gamma\in\Pi(\pi,\widetilde\pi)$ satisfying
\[
\|d_{\cX\times\cY,p}\|_{L^p(\gamma)}<\infty.
\]
Set
\[
M:=
\max\{
d_{\cX}(x,x'),d_{\cY}(y,y'),
d_{\cX}(u,u'),d_{\cY}(v,v')
\}.
\]
If $M=0$, both kernel values in the following comparison
vanish. For $M>0$, we have
\begin{align}
\label{assumption_cross_verification_inequality}
&\left|
\Gamma\bigl((x,y),(x',y')\bigr)
-
\Gamma\bigl((u,v),(u',v')\bigr)
\right|
\nonumber\\
&\le
rM^{s(r-1)}
\left|
\left|d_{\cX}(x,x')^s-d_{\cY}(y,y')^s\right|
-
\left|d_{\cX}(u,u')^s-d_{\cY}(v,v')^s\right|
\right|
\nonumber\\
&\le
rM^{s(r-1)}
\Bigl(
\left|d_{\cX}(x,x')^s-d_{\cX}(u,u')^s\right|
+
\left|d_{\cY}(y,y')^s-d_{\cY}(v,v')^s\right|
\Bigr)
\nonumber\\
&\le
rsM^{rs-1}
\Bigl(
\left|d_{\cX}(x,x')-d_{\cX}(u,u')\right|
+
\left|d_{\cY}(y,y')-d_{\cY}(v,v')\right|
\Bigr)
\nonumber\\
&\le
rsM^{rs-1}
\Bigl(
d_{\cX}(x,u)+d_{\cX}(x',u')
+d_{\cY}(y,v)+d_{\cY}(y',v')
\Bigr)
\nonumber\\
&\le
rs\,2^{1-1/p}M^{rs-1}
\Bigl(
d_{\cX\times\cY,p}((x,y),(u,v))
+
d_{\cX\times\cY,p}((x',y'),(u',v'))
\Bigr).
\end{align}
Here, the first and third inequalities use
$|a^q-b^q|
\le
q(a\vee b)^{q-1}|a-b|$
for $q\ge1$ and $a,b\ge0$,
which follows from the mean-value theorem. The second and fourth inequalities follow from the triangle inequality. The last follows from H\"older's inequality
$a+b\le 2^{1-1/p}(a^p+b^p)^{1/p}$
for $a,b\ge0$ under  $1\leq p< +\infty$ and  $a+b\le2(a\vee b)$ under  $p=\infty$.
Write $z=(x,y)$, $w=(u,v)$, $z'=(x',y')$, and $w'=(u',v')$.
Since $\gamma\in\Pi(\pi,\widetilde\pi)$, the triangle inequality gives
\[
\bigl|\mathcal{E}_{\Gamma}(\pi)
      -\mathcal{E}_{\Gamma}(\widetilde\pi)\bigr|
\le
\iint \bigl|\Gamma(z,z')-\Gamma(w,w')\bigr|
\,d\gamma(z,w)\,d\gamma(z',w').
\]

Suppose first that $rs=1$.
Integrating
\eqref{assumption_cross_verification_inequality}
then using the $L^p$ triangle inequality 
and taking infimum over $\gamma\in\Pi(\pi,\widetilde\pi)$ gives, for every $1\le p\le\infty$,
\[
|\cE_{\Gamma}(\pi)-\cE_{\Gamma}(\widetilde\pi)|
\le
2^{2-1/p}
\inf_{\gamma\in\Pi(\pi,\widetilde\pi)}
\|d_{\cX\times\cY,p}\|_{L^p(\gamma)}
=
2^{2-1/p}
W_p(\pi,\widetilde\pi)
\]

Then we assume that $rs>1$ and $1<p<\infty$.
Set $\ell:=(rs-1)\frac{p}{p-1}$.
\Cref{assumption:Agamma} implies $T_\ell<\infty$.
We have
\[
\|M^{rs-1}\|_{L^{\frac{p}{p-1}}(\gamma\otimes\gamma)}
=
\left(
\iint M^\ell\,d(\gamma\otimes\gamma)
\right)^{\frac{p-1}{p}}
\le
T_\ell^{\frac{p-1}{p}}.
\]
Applying H\"older's inequality to
\eqref{assumption_cross_verification_inequality},
then using the triangle inequality,
and taking infimum over $\gamma\in\Pi(\pi,\widetilde\pi)$ yields
\begin{align*}
|\cE_{\Gamma}(\pi)-\cE_{\Gamma}(\widetilde\pi)|
%&\le rs\,2^{1-1/p}T_\ell^{\frac{p-1}{p}}\inf_{\gamma\in\Pi(\pi,\widetilde\pi)}\Bigl\|d_{\cX\times\cY,p}((x,y),(u,v))+d_{\cX\times\cY,p((x',y'),(u',v'))\Bigr\|_{L^p(\gamma\otimes\gamma)}\\
&\le
2^{2-1/p}rs\,T_\ell^{\frac{p-1}{p}}
\inf_{\gamma\in\Pi(\pi,\widetilde\pi)} \|d_{\cX\times\cY,p}\|_{L^p(\gamma)}
=
2^{2-1/p}rs\,T_\ell^{\frac{p-1}{p}}
W_p(\pi,\widetilde\pi).
\end{align*}

Next, assume that $rs>1$ and $p=\infty$.
\Cref{assumption:Agamma} indicates  $T_{rs-1}<\infty$, and
\[
\|M^{rs-1}\|_{L^1(\gamma\otimes\gamma)}
\le T_{rs-1}.
\]
Analogously, one can obtain
\begin{align*}
|\cE_{\Gamma}(\pi)-\cE_{\Gamma}(\widetilde\pi)|
%&\le2rs\,T_{rs-1}\inf_{\gamma\in\Pi(\pi,\widetilde\pi)}\Bigl\|d_{\cX\times\cY,\infty}((x,y),(u,v))+d_{\cX\times\cY,\infty}((x',y'),(u',v'))\Bigr\|_{L^\infty(\gamma\otimes\gamma)}\\
&\le
4rs\,T_{rs-1}
\inf_{\gamma\in\Pi(\pi,\widetilde\pi)}\|d_{\cX\times\cY,\infty}\|_{L^\infty(\gamma)}
=
4rs\,T_{rs-1}
W_p(\pi,\widetilde\pi).
\end{align*}

Finally, assume $rs>1$ and $p=1$.
By \Cref{assumption:Agamma}, all four marginals have bounded support. Hence there exists $D_0>0$ such that
$\operatorname{supp}\mu\cup\operatorname{supp}\tilde\mu$
and
$\operatorname{supp}\nu\cup\operatorname{supp}\tilde\nu$
have diameters at most $D_0$, for some $D_0>0$ 
and then $M\le D_0$.
As a consequence, we deduce analogously
\[
|\cE_{\Gamma}(\pi)-\cE_{\Gamma}(\widetilde\pi)|
\le
2rsD_0^{rs-1}
\inf_{\gamma\in\Pi(\pi,\widetilde\pi)}
\|d_{\cX\times\cY,p}\|_{L^1(\gamma)}
=
2rsD_0^{rs-1} W_1(\pi,\widetilde\pi).
\]
This completes the proof. Namely, one could take 
\[
\Lambda=
\begin{cases}
2^{2-1/p},
& rs=1,\\[0.6ex]
2^{2-1/p}rs\,
T_{(rs-1)p/(p-1)}^{(p-1)/p},
& rs>1,\quad 1<p<\infty,\\[0.6ex]
4rs\,T_{rs-1},
& rs>1,\quad p=\infty,\\[0.6ex]
2rs\,D_0^{rs-1},
& rs>1,\quad p=1.
\end{cases}
\]
\end{proof}

\begin{proof}[Verification details of \Cref{remark:Cgamma-verification}]
Fix $\theta\in\Pi(\mu,\nu)$ with
$\cF_\varepsilon^{\mu,\nu}(\theta)<\infty$, and write
$
f:=\frac{d\theta}{d(\mu\otimes\nu)},
f_*:=\frac{d\pi^*}{d(\mu\otimes\nu)}.
$   
Since $a+b-2\leq ab$ for $a,b\geq 1$,
\begin{align}\label{eqn:verification_bound_distortion}
0\le\Gamma_{r,s}((x,y),(x',y'))
&\le
d_{\cX}(x,x')^{rs}+d_{\cY}(y,y')^{rs}\\
&\le
2^{rs-1}\bigl(w(x,y)+w(x',y')-2\bigr)
\le
2^{rs-1}w(x,y)w(x',y').\nonumber
\end{align}
Under the standing moment assumptions,
$\int w\,d\eta<\infty$ for every $\eta\in\Pi(\mu,\nu)$,
so this bound ensures finiteness of all upcoming integrals. In particular, we have
\begin{equation}
\label{eq:Cgamma-weighted-energy}
\cE_\Gamma(\theta-\pi^*)
\ge
-2^{rs-2}\|w(\theta-\pi^*)\|_{\mathrm{TV}}^2.
\end{equation} 
Actually, for $\sigma:=\theta-\pi^*$, the inequality
$|\sigma|\le\theta+\pi^*$, symmetry and nonnegativity of $\Gamma$
and \eqref{inequality_gamma_bound_assumption_verificaion}, \eqref{eqn:verification_bound_distortion} 
imply
\begin{align*}
&\cE_\Gamma(\theta-\pi^*)
=
\cE_\Gamma(\sigma^+)+\cE_\Gamma(\sigma^-)
-2\iint\Gamma(z,z')\,d\sigma^+(z)\,d\sigma^-(z')
\notag\\
&\ge
-2^{rs}
\left(\int w\,d\sigma^+\right)
\left(\int w\,d\sigma^-\right)
\ge
-2^{rs-2}\left(\int w\,d|\sigma|\right)^2
=
-2^{rs-2}\|w(\theta-\pi^*)\|_{\mathrm{TV}}^2,
\end{align*}
where $\sigma=\sigma^+-\sigma^-$ is the Jordan decomposition and the last inequality uses $4ab\le(a+b)^2$.
For $0<t<1$, set $f_t:=(1-t)f_*+tf$.
For the regularizers considered below, the integral Taylor formula yields
\begin{equation}
\label{eq:Cgamma-bregman-integral}
B^{\mu,\nu}_\varphi(\theta\mid\pi^*)
=
\int_0^1(1-t)
\int \varphi''(f_t)(f-f_*)^2\,d(\mu\otimes\nu)\,dt.
\end{equation}

\begin{comment}
Whenever $\mu$ and $\nu$ have compact supports, set
\[
M:=
\max\left\{
\operatorname{diam}(\operatorname{supp}\mu)^{rs},
\operatorname{diam}(\operatorname{supp}\nu)^{rs}
\right\}<\infty.
\]
For $(x,y)$ and $(x',y')$ in
$\operatorname{supp}\mu\times\operatorname{supp}\nu$,
\[
0\le\Gamma_{r,s}((x,y),(x',y'))
=
\left|d_{\cX}(x,x')^s-d_{\cY}(y,y')^s\right|^r
\le
\max\left\{
d_{\cX}(x,x')^{rs},d_{\cY}(y,y')^{rs}
\right\}
\le M.
\]
Hence, for $\sigma:=\theta-\pi^*$, boundedness of
$\Gamma$ gives
\[
\iint\Gamma(z,z')\,d|\sigma|(z)\,d|\sigma|(z')
\le M\|\sigma\|_{\mathrm{TV}}^2<\infty.
\]
Moreover, since $\sigma(\cX\times\cY)=0$, its Jordan decomposition
$\sigma=\sigma^+-\sigma^-$ satisfies
$\sigma^+(\cX\times\cY)
=
\sigma^-(\cX\times\cY)
=
\frac12\|\sigma\|_{\mathrm{TV}}$.
Therefore, symmetry and nonnegativity of $\Gamma$ imply
\begin{align}
\cE_\Gamma(\theta-\pi^*)
&=
\cE_\Gamma(\sigma^+)+\cE_\Gamma(\sigma^-)
-2\iint\Gamma(z,z')\,d\sigma^+(z)\,d\sigma^-(z')\nonumber
\\
&\ge
-2M\,\sigma^+(\cX\times\cY)\sigma^-(\cX\times\cY)
=
-\frac M2\|\theta-\pi^*\|_{\mathrm{TV}}^2.
\label{eq:Cgamma-bounded-energy}
\end{align}
\end{comment}

\medskip
\noindent
\textup{(i) Entropy regularization.}
For $\varphi(t)=t\log t$, an admissible constant is
\[
\kappa=2^{rs-1}S.
\]
Indeed, since $\varphi''(t)=t^{-1}$, Cauchy--Schwarz inequality gives
\[
\begin{aligned}
\|w(\theta-\pi^*)\|_{\mathrm{TV}}^2
&=
\left(\int w|f-f_*|\,d(\mu\otimes\nu)\right)^2\\
&\le
\left(\int\frac{(f-f_*)^2}{f_t}\,d(\mu\otimes\nu)\right)
\left(\int w^2f_t\,d(\mu\otimes\nu)\right)\\
&\le
S\int\frac{(f-f_*)^2}{f_t}\,d(\mu\otimes\nu).
\end{aligned}
\]
Fubini's theorem then yields
\begin{align*}
\frac{\|w(\theta-\pi^*)\|_{\mathrm{TV}}^2}{2}
&=
\int_0^1(1-t)\|w(\theta-\pi^*)\|_{\mathrm{TV}}^2\,dt
\le
S\int_0^1(1-t)
\int\frac{(f-f_*)^2}{f_t}\,d (\mu\otimes\nu)\,dt\\
&=
S\int
\left[
\int_0^1(1-t)\frac{(f-f_*)^2}{f_t}\,dt
\right]d(\mu\otimes\nu)
=
S\int b_\varphi(f\mid f_*)\,d(\mu\otimes\nu)\\
&=
S B^{\mu,\nu}_\varphi(\theta\mid\pi^*).
\end{align*}
Combining this with \eqref{eq:Cgamma-weighted-energy} proves the claim.

\begin{comment}
On compact supports, the same argument with weight $1$ gives
$B^{\mu,\nu}_\varphi(\theta\mid\pi^*)
\ge\frac12\|\theta-\pi^*\|_{\mathrm{TV}}^2.$
Thus \eqref{eq:Cgamma-bounded-energy} gives the sharper choice
\[
\kappa=M, \qquad \varepsilon>M.
\]
\end{comment}

\medskip
\noindent
\textup{(ii) Tsallis regularization with $1<\alpha\le2$.}
For $\varphi(t)=(t^\alpha-1)/\alpha$, an admissible constant is
\[
\kappa=
\frac{2^{rs-1}}{\alpha-1}
\left(\int w^2\,d(\mu\otimes\nu)\right)^{\alpha-1}S^{2-\alpha}.
\]
Since
$f_t(\mu\otimes\nu)=(1-t)\pi^*+t\theta\in\Pi(\mu,\nu)$,
the definition of $S$ gives $\int w^2f_t\,d(\mu\otimes\nu)\le S$.
Using H\"{o}lder inequality twice now gives
\begin{align*}
\|w(\theta-\pi^*&)\|_{\mathrm{TV}}^2
%&=\left(\int_{\{f_t>0\}}\bigl(|f-f_*|f_t^{(\alpha-2)/2}\bigr)\bigl(wf_t^{(2-\alpha)/2}\bigr)\,d(\mu\otimes\nu)\right)^2\\
\le
\left(
\int (f-f_*)^2f_t^{\,\alpha-2}\,d(\mu\otimes\nu)
\right)
\left(
\int w^2f_t^{\,2-\alpha}\,d(\mu\otimes\nu)
\right)\\
&\le\left(\int w^2\,d(\mu\otimes\nu)\right)^{\alpha-1}
\left(\int w^2f_t\,d(\mu\otimes\nu)\right)^{2-\alpha}\int (f-f_*)^2f_t^{\,\alpha-2}\,d(\mu\otimes\nu)\\
&\le
\left(\int w^2\,d(\mu\otimes\nu)\right)^{\alpha-1}
S^{2-\alpha}
\int (f-f_*)^2f_t^{\,\alpha-2}\,d(\mu\otimes\nu).
\end{align*}
Since $\varphi''(u)=(\alpha-1)u^{\alpha-2}$,
the integral Taylor formula
\eqref{eq:Cgamma-bregman-integral} and Fubini's theorem yield
\begin{align*}
B^{\mu,\nu}_{\varphi}(\theta\mid\pi^*)
&=
(\alpha-1)\int_0^1(1-t)
\int (f-f_*)^2f_t^{\,\alpha-2}
\,d(\mu\otimes\nu)\,dt\\
&\ge
\frac{(\alpha-1)\|w(\theta-\pi^*)\|_{\mathrm{TV}}^2}
{\left(\int w^2\,d(\mu\otimes\nu)\right)^{\alpha-1}
S^{2-\alpha}}
\int_0^1(1-t)\,dt
=
\frac{(\alpha-1) \|w(\theta-\pi^*)\|_{\mathrm{TV}}^2}
{2\left(\int w^2\,d(\mu\otimes\nu)\right)^{\alpha-1}
S^{2-\alpha}}
.
\end{align*}
Combining this with \eqref{eq:Cgamma-weighted-energy} proves the claim.

\begin{comment}
On compact supports, using weight $1$ instead gives
$B_{\varphi_\alpha}(\theta\mid\pi^*)
\ge
\frac{\alpha-1}{2}\|\theta-\pi^*\|_{\mathrm{TV}}^2$.
Hence one may take
\[
\kappa=\frac{M}{\alpha-1},
\qquad
\varepsilon>\frac{M}{\alpha-1}.
\]
\end{comment}

\medskip
\noindent
\medskip
\noindent
\textup{(iii) Tsallis regularization with $\alpha>2$ on compact supports.}
Suppose that $\mu$ and $\nu$ have compact supports, and set
\[
M:=
\max\left\{
\operatorname{diam}(\operatorname{supp}\mu)^{rs},
\operatorname{diam}(\operatorname{supp}\nu)^{rs}
\right\}.
\]
If $\varepsilon\ge5M$, then
\Cref{assumption:Cgamma} holds on the prescribed coupling domain with
\[
\kappa:=
\frac{\alpha M}{2(\alpha-1)}
\left(1-\frac{4M}{\varepsilon}\right)^{-(\alpha-2)/(\alpha-1)}
<\varepsilon.
\]
We first use
\Cref{lemma:GW-linearization} to regard $\pi^*$ as an optimizer
of an auxiliary regularized OT problem.
Specifically, with
\[
c_*(x,y):=2\int\Gamma((x,y),(x',y'))\,d\pi^*(x',y'),
\]
we have $0\le c_*\le2M$ and
\[
\pi^*\in
\operatorname*{argmin}_{\theta\in\Pi(\mu,\nu)}
\left\{
\int c_*\,d\theta
+\varepsilon \,\mathsf D_{\varphi}(\theta,\mu\otimes\nu)
\right\}.
\]
Bounded-cost convex-regularized transport duality
\cite{alberto.gonzalez-marcel.nutz_austin.stromme2026}
provides bounded potentials $u,v$ such that
$$f_*(x,y)=
\left(
\frac{u(x)+v(y)-c_*(x,y)}{\varepsilon}
\right)_+^{1/(\alpha-1)}$$
with 
$\operatorname{ess\,osc}_{\nu}v\le2M$
and then
$
\operatorname{ess\,osc}_{y\sim\nu}
\bigl(u(x)+v(y)-c_*(x,y)\bigr)\le4M.
$
Equation $\int f_*(x,y) d\nu(y)=1$ for
$f_*$ implies
$
\operatorname*{ess\,sup}_{y\sim\nu}
\bigl(u(x)+v(y)-c_*(x,y)\bigr)\ge\varepsilon.
$
Thus $u(x)+v(y)-c_*(x,y)\ge\varepsilon-4M$  and hence
\begin{equation}
\label{eq:Cgamma-Tsallis-density}
f_*\ge
\Bigl(1-\frac{4M}{\varepsilon}\Bigr)^{\frac{1}{\alpha-1}}
\end{equation}
Note that $f_t \geq (1-t)f_*$,  using \eqref{eq:Cgamma-Tsallis-density}, the integral Taylor formula \eqref{eq:Cgamma-bregman-integral} and Fubini's theorem yield
\begin{align*}
B^{\mu,\nu}_{\varphi}&(\theta\mid\pi^*)
\geq
(\alpha-1)
\Bigl(1-\frac{4M}{\varepsilon}\Bigr)^{\frac{\alpha-2}{\alpha-1}}
\int_0^1
(1-t)^{\alpha-1} \int  (f-f_*)^{2}
\,d(\mu\otimes\nu)\,dt\\
&=
\frac{\alpha-1}{\alpha}
\Bigl(1-\frac{4M}{\varepsilon}\Bigr)^{\frac{\alpha-2}{\alpha-1}}
\int(f-f_*)^2\,d(\mu\otimes\nu)
\ge
\frac{\alpha-1}{\alpha}
\Bigl(1-\frac{4M}{\varepsilon}\Bigr)^{\frac{\alpha-2}{\alpha-1}}
\|\theta-\pi^*\|_{\mathrm{TV}}^2.
\end{align*}
We apply Cauchy--Schwarz with $\mu\otimes\nu$ for the last inequality.
Since $0\le\Gamma\le M$, the Jordan decomposition as above gives
\[
\cE_\Gamma(\theta-\pi^*)
\ge-\frac M2\|\theta-\pi^*\|_{\mathrm{TV}}^2
\ge-\kappa B^{\mu,\nu}_{\varphi}(\theta\mid\pi^*).
\]
Finally, $\varepsilon\ge5M$ implies
\[
\frac{\kappa}{\varepsilon}
\le
\frac{\alpha}{2(\alpha-1)}\,5^{-1/(\alpha-1)}
<1,
\]
which completes the verification.
\end{proof}

\begin{proof}[Proof of \Cref{lem:gw-cost-bounds}]
Since $\|A\|_{\mathrm{op}}\le\|A\|_F\le R_xR_y/2$,
\[
 |c_A(x,y)|
 \le4R_x^2R_y^2+32R_xR_y\|A\|_F
 \le20R_x^2R_y^2.
\]
For $A,A'\in D$,
\[
 |c_A(x,y)-c_{A'}(x,y)|
 =32|x^\top(A-A')y|
 \le32R_xR_y\|A-A'\|_F.
\]
Moreover, $|\|x\|^2-\|x'\|^2|\le2R_x\|x-x'\|$, so
\begin{align*}
 |c_A(x,y)-c_A(x',y)|
 &\le4|\|x\|^2-\|x'\|^2|R_y^2
       +32\|x-x'\|\|A\|_{\mathrm{op}}R_y\\
 &\le(8R_xR_y^2+16R_xR_y^2)\|x-x'\|=24R_xR_y^2\|x-x'\|.
\end{align*}
The other coordinate bound follows by interchanging $x$ and $y$.
\end{proof}

\begin{proof}[Proof of \Cref{lem:covering}]
Throughout the proof, $C$ denotes a finite positive constant depending on  $R_x,d_x,R_y,d_y,\varphi,\varepsilon$.
Its value may change from one display to the next.

We first establish the following claim: there exist a finite
integer $J\ge1$ satisfying \eqref{eq:covering-J-bound} and fixed
continuous linear functionals
$\tau_j:\mathcal C(\mathcal X)\to\bR$, $j=1,\ldots,J$, satisfying
\[
 |\tau_j(h)|\le\|h\|_\infty
 \qquad\text{for every }h\in\mathcal C(\mathcal X),
\]
such that for every $(A,f,g),(A',f',g')\in\cS$,
\begin{equation}
 \|H_{A,f,g}-H_{A',f',g'}\|_\infty
 \le C\bigl(
 \|A-A'\|_F^{\frac{1}{2(1+\beta)}}
 +\|\tau(f)-\tau(f')\|_{\ell^\infty_J}^{\frac{1}{1+\beta}}
 \bigr),
\label{eq:integrand-stability}
\end{equation}
where $\tau(f)=(\tau_1(f),\ldots,\tau_J(f))$, $\|a\|_{\ell^\infty_J}=\max_{1\le j\le J}|a_j|, a\in\bR^J$.
Assuming this claim, define
\[
 \Theta
 :=
 \{(A,\tau(f)):(A,f,g)\in\cS\}
 \subset\bR^{d_xd_y+J}.
\]
This set is bounded because $\mathcal D$ is bounded and $|\tau_j(f)|\le B$ under the normalization. 
Here, we identify \(\mathbb R^{d_xd_y+J}\) with \(\mathbb R^{d_x\times d_y}\times\mathbb R^J\) and equip it with the Euclidean norm
$
|(A,a)|
:=
\left(\|A\|_F^2+\sum_{j=1}^J |a_j|^2\right)^{1/2}.
$
By \eqref{eq:integrand-stability}, $z\mapsto H_z$ is a well-defined map from $\Theta$ onto $\cH$.

Choose $R_\Theta\ge1$ such that
$\Theta\subset B(0,R_\Theta)$. On this bounded set,
\eqref{eq:integrand-stability} implies
\[
 \|H_z-H_{z'}\|_\infty
 \le L_\Theta|z-z'|^{\frac{1}{2(1+\beta)}}
\]
for some $L_\Theta\ge1$. 
A parameter net with centers in $\Theta$ therefore gives
\begin{align}
 N(\eta,\cH,\|\cdot\|_\infty)
 \le
 N\bigl((\eta/L_\Theta)^{2(1+\beta)},
        \Theta,|\cdot|\bigr)
 \le
 \left[
  1+2R_\Theta(L_\Theta/\eta)^{2(1+\beta)}
 \right]^{d_xd_y+J}.
 \label{eq:volumetric-cover}
\end{align}
The final inequality follows from a standard volumetric packing argument applied to a maximal separated subset of $\Theta$; see \cite[Section~4.2]{vershynin2019high}. 
Also note that
$\sup_{H\in\cH}\|H\|_\infty
\le2B+\varepsilon\sup_I|\psi|<\infty$.
Equation~\eqref{eq:entropy-bound} follows.
Thus, it remains to prove \eqref{eq:integrand-stability}.

\textit{Step 1: Partition.}
Every element $(A,f,g)$ belonging to \(\mathcal S\) satisfies \(\|f\|_\infty,\|g\|_\infty\le B\) under the prescribed normalization, and inherits the respective coordinatewise Lipschitz constants $24R_xR_y^2$ and $24R_x^2R_y$, respectively.
The interval $I$ contains all dual potential related arguments below.
Write $M=\max\{1,\sup_I\psi'\}$, and we retain $L,\kappa=\kappa_{I,1/2}$ as above.
Take $(A,f,g),(A',f',g')\in\cS$ and put
\[
 u=f'-f,\qquad v=g'-g,\qquad
 \xi=\frac{f\oplus g-c_A}{\varepsilon},
 \qquad \rho=\psi'(\xi).
\]
Then $0\le\rho\le M$. Define
\begin{equation*}
 Q(u,v)
 :=
 (\mu\otimes\nu)
 \bigl[\ind_{\{\rho\ge1/2\}}(u\oplus v)^2\bigr].
\end{equation*}
Fubini's theorem and the first-order condition \eqref{eqn:schrodinger_system} give
\begin{align*}
 (\mu\otimes\nu)[\rho(u\oplus v)]
 &=
 \int_{\mathcal X}
 u(x)
 \int_{\mathcal Y}\rho(x,y)\,d\nu(y)
d\mu(x)+
 \int_{\mathcal Y}
 v(y)
 \int_{\mathcal X}\rho(x,y)\,d\mu(x)
 d\nu(y)\\
 &=
 \int u(x)\,d\mu(x)+\int v(y)\,d\nu(y)
 =
 (\mu\otimes\nu)(u\oplus v).
\end{align*}
Consequently, writing $\Phi_A=\Phi_{\mu,\nu}^{c_A}$,
\begin{align*}
 \Phi_A(f,g)-\Phi_A(f',g')
 =
 \varepsilon(\mu\otimes\nu)
 B_\psi\!\left(
 \xi+\frac{u\oplus v}{\varepsilon},\xi
 \right)\ge
 \frac{\kappa}{\varepsilon}Q(u,v).
\end{align*}
The optimality of $(f',g')$ for $c_{A'}$ yields
\begin{align*}
 \Phi_A(f,g)-\Phi_A(f',g')
 %&=\Phi_A(f,g)-\Phi_{A'}(f,g)+\Phi_{A'}(f,g)-\Phi_{A'}(f',g') +\Phi_{A'}(f',g')-\Phi_A(f',g')
 \le
 2M\|c_A-c_{A'}\|_\infty,
\end{align*}
since
\[
 |\Phi_A(f,g)-\Phi_{A'}(f,g)|
 \le M\|c_A-c_{A'}\|_\infty,
 \qquad
 |\Phi_A(f',g')-\Phi_{A'}(f',g')|
 \le M\|c_A-c_{A'}\|_\infty.
\]
It follows that
\begin{equation}
 Q(u,v)
 \le
 \frac{2M\varepsilon}{\kappa}
 \|c_A-c_{A'}\|_\infty.
 \label{eq:active-gap-bound}
\end{equation}
For $a\in\{1/2,3/4\}$, set $m_a:=\frac{1-a}{M-a}$.
For any fixed $x\in\mathcal X$, the first-order condition \eqref{eqn:schrodinger_system} implies
\[
 1=\int\rho(x,y)\,d\nu(y)
 \le
 a\bigl(1-\nu\{\rho(x,\cdot)\ge a\}\bigr)
 +M\nu\{\rho(x,\cdot)\ge a\}.
\]
Hence
\begin{equation}
 \nu\{\rho(x,\cdot)\ge a\}\ge m_a,
 \qquad a\in\{1/2,3/4\}.
 \label{eq:active-mass_x}
\end{equation}
Analogously, for $y\in\mathcal Y$,
\begin{equation}
 \mu\{\rho(\cdot,y)\ge a\}\ge m_a,
 \qquad a\in\{1/2,3/4\}.
 \label{eq:active-mass_y}
\end{equation}
The common Lipschitz bounds \eqref{eq:gw-spatial-cost-bounds} and H\"older continuity
\eqref{eq:holder-psi} give
\begin{align} \label{eq:density-modulus}
 \sup_{y\in\mathcal Y}
 |\rho(x,y)-\rho(x',y)|
&\le L\varepsilon^{-\frac{1}{1+\beta}}
\sup_{y\in\mathcal Y}
\left|
f(x)-f(x')-c_A(x,y)+c_A(x',y)
\right|^{\frac{1}{1+\beta}}\nonumber\\
&\le L\varepsilon^{-\frac{1}{1+\beta}}
\sup_{y\in\mathcal Y}
 \left(
|f(x)-f(x')|
+|c_A(x,y)-c_A(x',y)|
\right)^{\frac{1}{1+\beta}}\\
&\le
 L\varepsilon^{-\frac{1}{1+\beta}}
 \bigl(48R_xR_y^2\|x-x'\|\bigr)^{\frac{1}{1+\beta}}.\nonumber
\end{align}

The right side of \eqref{eq:density-modulus} is at most
$1/4$ whenever $\|x-x'\|\le r_0:=
\frac{\varepsilon}
{48R_xR_y^2(4L)^{1+\beta}}$.
We hence choose a maximal $r_0/2$-separated set
$\{x_1,\ldots,x_{N_0}\}\subset\mathcal X$.
Its open balls of radius $r_0$ cover $\mathcal X$.
The balls $B(x_j,r_0/4)$ are pairwise disjoint and lie in
$B(0,R_x+r_0/4)\subset\bR^{d_x}$. Comparing their volumes gives
$ N_0\le
 \left(1+\frac{4R_x}{r_0}\right)^{d_x}.$
Form a disjoint Borel partition subordinate to the cover
$\{B(x_j,r_0)\}_{j=1}^{N_0}$ and discard zero-$\mu$-mass cells.
Relabel the remaining cells as $E_1,\ldots,E_J$, retaining their
associated centers $x_j$. Then
\[
 E_j\subset B(x_j,r_0),\qquad
 \mu(E_j)>0,\qquad
 \mu\biggl(\,\bigcup_{j=1}^J E_j\biggr)=1,\qquad
 1\le J\le N_0.
\]
This proves \eqref{eq:covering-J-bound}.
Based on this specific partition, define the fixed functionals
\begin{equation*}
\tau_j(h):=\frac1{\mu(E_j)}\int_{E_j}h\,d\mu,
 \qquad
 h\in\mathcal C(\mathcal X),\quad j=1,\ldots,J.
\end{equation*}
They are linear and continuous, with
$|\tau_j(h)|\le\|h\|_\infty$.
Set
$
 F_j=\{y\in\mathcal Y:\rho(x_j,y)\ge3/4\}.
$
By \eqref{eq:active-mass_x},
\begin{equation}\label{eqn:nu(Fj)_lower-bound}
\nu(F_j)\ge m_{3/4}.
\end{equation}
For $x\in E_j$ and $y\in F_j$,
\eqref{eq:density-modulus} gives
$\rho(x,y)\ge3/4-1/4=1/2$. Thus
\begin{equation}
 E_j\times F_j\subset\{\rho\ge1/2\}.
 \label{eq:active-rectangles}
\end{equation}

\textit{Step 2: $L^2$ perturbation bound.}
For every fixed $y\in\mathcal Y$ the exact variance decomposition
gives
\begin{align}\label{eqn:Ejdecomposition}
 \int_{E_j}[u(x)+v(y)]^2\,d\mu(x)
 =
 \int_{E_j}|u(x)-\tau_j(u)|^2\,d\mu(x)+
 \mu(E_j)[\tau_j(u)+v(y)]^2.
\end{align}
Integrating over $F_j$, summing over the disjoint cells, and using
\Cref{eqn:nu(Fj)_lower-bound,eq:active-rectangles,eqn:Ejdecomposition}, we obtain
\begin{align*}
 Q&(u,v)
 \ge
 \sum_{j=1}^J
 \int_{F_j}\int_{E_j}
 |u(x)+v(y)|^2\,d\mu(x)d\nu(y)\\
 &\ge
 \sum_{j=1}^J
 \int_{F_j}\int_{E_j}
 |u(x)-\tau_j(u)|^2\,d\mu(x)d\nu(y)\ge
 m_{3/4}\sum_{j=1}^J
 \int_{E_j}|u(x)-\tau_j(u)|^2\,d\mu(x).
\end{align*}
Because the cell masses sum to one,
\begin{align}
 \|u\|_{L^2(\mu)}^2
 =
 \sum_{j=1}^J\int_{E_j}|u(x)|^2\,d\mu(x)
 &\le
 \sum_{j=1}^J
 \int_{E_j}|u(x)-\tau_j(u)|^2\,d\mu(x)
 +\|\tau(f')-\tau(f)\|_{\ell^\infty_J}^2
 \nonumber\\
 &\le
 \frac1{m_{3/4}}Q(u,v)
 +
 \|\tau(u)\|_{\ell^\infty_J}^2.
 \label{eq:first-potential-l2}
\end{align}
For each $y\in\mathcal Y$, put
$G_y=\{x\in\mathcal X:\rho(x,y)\ge1/2\}$.
Equation~\eqref{eq:active-mass_y} yields
$\mu(G_y)\ge m_{1/2}$.
Integrating
$|v(y)|^2\le2|u(x)+v(y)|^2+2|u(x)|^2$
over $G_y$ gives
\[
 m_{1/2}|v(y)|^2
 \le
 2\int_{G_y}|u(x)+v(y)|^2\,d\mu(x)
 +2\int_{G_y}|u(x)|^2\,d\mu(x).
\]
Integrating over $y\in\mathcal Y$ and using Fubini's theorem gives
\begin{align} \label{eq:second-potential-l2}
 \|v\|_{L^2(\nu)}^2
 &\le
 \frac2{m_{1/2}}
 \biggl(
 \int_{\mathcal Y}\int_{G_y}
 |u(x)+v(y)|^2\,d\mu(x)d\nu(y)
 +
 \int_{\mathcal Y}\int_{G_y}
 |u(x)|^2\,d\mu(x)d\nu(y)
 \biggr)
 \nonumber\\
 &=
 \frac2{m_{1/2}}
 \biggl(
 Q(u,v)
 +
 \int_{\mathcal X}|u(x)|^2
 \nu\{y:\rho(x,y)\ge1/2\}\,d\mu(x)
 \biggr)\\
 &\le
 \frac2{m_{1/2}}
 \bigl(Q(u,v)+\|u\|_{L^2(\mu)}^2\bigr).\nonumber
\end{align}
Combining \eqref{eq:active-gap-bound},
\eqref{eq:first-potential-l2}, and
\eqref{eq:second-potential-l2} proves
\begin{equation}
 \|u\|_{L^2(\mu)}+\|v\|_{L^2(\nu)}
 \le
 C\bigl(\|c_A-c_{A'}\|_\infty^{1/2}
 +
 \|\tau(u)\|_{\ell^\infty_J}
 \bigr).
 \label{eq:both-potentials-l2}
\end{equation}

\textit{Step 3: From $L^2$ perturbation bound to uniform perturbation bound.}
The identity
$
 (\psi'(s)-\psi'(t))(s-t)
 =
 B_\psi(s,t)+B_\psi(t,s)
$
and convexity, together with \eqref{eq:active-curvature}, imply
\begin{equation}
 |\psi'(s)-\psi'(t)|\ge\kappa|s-t|
 \quad\text{if }s,t\in I
 \text{ and }\psi'(t)\ge1/2.
 \label{eq:inverse-slope}
\end{equation}
Since $\psi'$ is nondecreasing, the difference
$
 \psi'\!\left(
 \frac{f'(x)+g(y)-c_A(x,y)}{\varepsilon}
 \right)
 -
 \psi'\!\left(
 \frac{f(x)+g(y)-c_A(x,y)}{\varepsilon}
 \right)
$
has the same sign as $f'(x)-f(x)$.
Thus, \eqref{eq:inverse-slope} and the first-order condition \eqref{eqn:schrodinger_system} indicate that
\begin{align} \label{eq:scalar-inverse}
 &\quad
 \left|
 \int
 \psi'\!\left(
 \frac{f'(x)+g(y)-c_A(x,y)}{\varepsilon}
 \right)d\nu(y)-1
 \right|
 \nonumber\\
 &=
 \int
 \left|
 \psi'\!\left(
 \frac{f'(x)+g(y)-c_A(x,y)}{\varepsilon}
 \right)
 -
 \psi'\!\left(
 \frac{f(x)+g(y)-c_A(x,y)}{\varepsilon}
 \right)
 \right|d\nu(y)
\\
 &\ge
 \int_{\{y,\rho(x,y)\ge1/2\}}
 \left|
 \psi'\!\left(
 \frac{f'(x)+g(y)-c_A(x,y)}{\varepsilon}
 \right)
 -
 \psi'\!\left(
 \frac{f(x)+g(y)-c_A(x,y)}{\varepsilon}
 \right)
 \right|d\nu(y)\nonumber\\
 &\ge
 \frac{\kappa m_{1/2}}{\varepsilon}|u(x)|\nonumber.
\end{align}
At the same time, first-order condition \eqref{eqn:schrodinger_system} for $c_{A'}$ gives 
$
 \int
 \psi'\!\left(
 \frac{f'(x)+g'(y)-c_{A'}(x,y)}{\varepsilon}
 \right)d\nu(y)=1.
$
Using \eqref{eq:holder-psi}, one has
\begin{align*}
 \Bigl|
 \int
 \psi'\Bigl(
 \frac{f'(x)+g(y)-c_A(x,y)}{\varepsilon}
 \Bigr)d\nu(y)-1
 \Bigr|
 &\le
 \frac{L}{\varepsilon^{\frac{1}{1+\beta}}}
 \int
 (|v(y)|+\|c_A-c_{A'}\|_\infty)^{\frac{1}{1+\beta}}
 \,d\nu(y)\\
 &\le
 L\varepsilon^{-\frac{1}{1+\beta}}
 \bigl(
 \|v\|_{L^2(\nu)}^{\frac{1}{1+\beta}}
 +\|c_A-c_{A'}\|_\infty^{\frac{1}{1+\beta}}
 \bigr).
\end{align*}
We used $(r+s)^{\frac{1}{1+\beta}}\le r^{\frac{1}{1+\beta}}+s^{\frac{1}{1+\beta}}$ for $r,s\ge0$
and
$\int|v|^{\frac{1}{1+\beta}}\,d\nu
\le
\left(\int|v|^2\,d\nu\right)^{\frac{1}{2(1+\beta)}}
=
\|v\|_{L^2(\nu)}^{\frac{1}{1+\beta}}.
$
Together with \eqref{eq:scalar-inverse}, this yields
\begin{align}
 \|u\|_\infty
 &\le
 \frac{L\varepsilon^{\frac{\beta}{1+\beta}}}{\kappa m_{1/2}}
 \bigl(
 \|v\|_{L^2(\nu)}^{\frac{1}{1+\beta}}
 +\|c_A-c_{A'}\|_\infty^{\frac{1}{1+\beta}}
 \bigr).
 \label{eq:scalar-upgrade_u}
\end{align}
Analogously,
\begin{align}
 \|v\|_\infty
 &\le
 \frac{L\varepsilon^{\frac{\beta}{1+\beta}}}{\kappa m_{1/2}}
 \bigl(
 \|u\|_{L^2(\mu)}^{\frac{1}{1+\beta}}
 +\|c_A-c_{A'}\|_\infty^{\frac{1}{1+\beta}}
 \bigr).
 \label{eq:scalar-upgrade_v}
\end{align}
Since $\|c_A-c_{A'}\|_\infty\le 40 R_x^2 R_y^2$ by \Cref{lem:gw-cost-bounds}, equations
\eqref{eq:both-potentials-l2}--\eqref{eq:scalar-upgrade_v}
imply
\begin{equation}
 \|u\|_\infty+\|v\|_\infty
 \le
 C\bigl(
 \|c_A-c_{A'}\|_\infty^{\frac{1}{2(1+\beta)}}+\|\tau(u)\|_{\ell^\infty_J}^{\frac{1}{1+\beta}}
 \bigr).
 \label{eq:potential-uniform-stability}
\end{equation}
The function $\psi$ is $M$-Lipschitz on $I$. Hence,
\begin{align*}
 \|H_{A',f',g'}-H_{A,f,g}\|_\infty
 &\le
 (1+M)(\|u\|_\infty+\|v\|_\infty)
 +M\|c_A-c_{A'}\|_\infty\\
 &\le
 C\bigl(
 \|c_A-c_{A'}\|_\infty^{\frac{1}{2(1+\beta)}}+\|\tau(u)\|_{\ell^\infty_J}^{\frac{1}{1+\beta}}
 \bigr).
\end{align*}
Combining with Lemma \ref{lem:gw-cost-bounds}, this proves
\eqref{eq:integrand-stability}.
\end{proof}

\begin{proof}[Proof of \Cref{lem:empirical-reduction}]
We use the notations introduced in the proof of \Cref{thm:gw-duality}. For every $\pi\in\Pi(\lambda,\eta)$,
$\|B_\pi\|_F\le R_xR_y$, so $B_\pi/2\in \mathcal D$.
The minimum defining $V_\varepsilon(\lambda,\eta)$ exists by the same
compactness and continuity argument as in
Theorem~\ref{thm:gw-duality}. By \eqref{eq:distortionexpansion}, the completion of the square
used in \eqref{eq:gw-completed-square}, and $B_\pi/2\in \mathcal D$,
taking infima over $A$ and $\pi$ in either order gives
\[
\mathsf{GW}^1(\lambda,\eta)+V_\varepsilon(\lambda,\eta)
=
\inf_{\pi\in\Pi(\lambda,\eta)}
\left\{
\mathcal F_\varepsilon^{\lambda,\eta}(\pi)
-
\Bigl( 8a_\lambda^\top\int x\|y\|^2\,d\pi
+
8b_\eta^\top\int y\|x\|^2\,d\pi
\Bigr)
\right\}.
\]
Moreover, uniformly over $\pi\in\Pi(\lambda,\eta)$,
\[
\Bigl|
8a_\lambda^\top\int x\|y\|^2\,d\pi
+
8b_\eta^\top\int y\|x\|^2\,d\pi
\Bigr|
\le
8R_xR_y^2\|a_\lambda\|
+
8R_x^2R_y\|b_\eta\|.
\]
Comparison of the infima therefore yields
\[
\begin{aligned}
\left|
S_{2,2,\varepsilon}(\lambda,\eta)
-\mathsf{GW}^1(\lambda,\eta)
-V_\varepsilon(\lambda,\eta)
\right|
\le
8R_xR_y^2\|a_\lambda\|
+
8R_x^2R_y\|b_\eta\|.
\end{aligned}
\]
Apply this estimate with $(\lambda,\eta)=(\mu_n,\nu_n)$.
Since $\mu$ and $\nu$ are centered and
$M_{\mu,\nu}\le R_xR_y$, equation
\eqref{eq:gw-variational-duality}, with $M=R_xR_y$, gives
\[
S_{2,2,\varepsilon}(\mu,\nu)
=
\mathsf{GW}^1(\mu,\nu)+V_\varepsilon(\mu,\nu).
\]
The triangle inequality now proves
\eqref{eq:empirical-gw-reduction}.
\end{proof}

\end{document}